\documentclass[reqno]{amsart}
\usepackage[all]{xy}
\usepackage{enumerate}
\usepackage{amscd}
\usepackage{enumerate}
\usepackage{amssymb,amsmath}
\usepackage{graphicx}
\theoremstyle{plain}
  \newtheorem{theorem}{Theorem}[section]
  \newtheorem{corollary}[theorem]{Corollary}
  \newtheorem{lemma}[theorem]{Lemma}
  \newtheorem{proposition}[theorem]{Proposition}
  
 \newtheorem{obs}[theorem]{Observation}
  \theoremstyle{definition}
  \newtheorem{definition}[theorem]{Definition}

\DeclareMathOperator{\id}{id}
\DeclareMathOperator{\dom}{dom}
\DeclareMathOperator{\img}{img}
\DeclareMathOperator{\mul}{mul}

\DeclareMathOperator{\GL}{GL}

\DeclareMathOperator{\Id}{Id}

\DeclareMathOperator{\grad}{grad}

\DeclareMathOperator{\coker}{coker}

\newcommand\reg{\mathrm{reg}}

\newcommand\tX{{\tilde X}}
\newcommand\lf{\mathrm{l.f.}}
\newcommand\Nov{\mathrm{Nov}}

\newcommand{\N}{\mathbb N}
\newcommand{\X}{\mathbb X}
\newcommand{\R}{\mathbb R}

\newcommand{\im}{\text{\rm im}}

\begin{document}

\title{Graph Representations  and topology of  real  and angle valued maps}

\author{Dan Burghelea}

\address{Dept. of Mathematics,
         The Ohio State University,
         231 West 18th Avenue,
         Columbus, OH 43210, USA.}

\email{burghele@mps.ohio-state.edu}

\author{Stefan Haller}

\address{Department of Mathematics,
         University of Vienna,
         Nordbergstra{\ss}e 15,
         A-1090 Vienna, Austria.} 

\email{stefan.haller@univie.ac.at}

\thanks{Part of this work
  was done while the second author enjoyed the warm hospitality of the
  Ohio State University. 
  The first author acknowledge partial support from NSF grant MCS 0915996.
  The second author acknowledges the support of the Austrian Science Fund, grant P19392-N13.}

\keywords{}

\subjclass{}

\date{\today}

\begin{abstract}
In this paper we review the definition of the invariants ``bar codes'' and ``Jordan cells'' of real and angle valued tame maps 
as proposed in \cite{BD11} and \cite{CSD09} and prove the homotopy invariance of the sums $ \sharp \mathcal B^c_r +\sharp \mathcal B^o_{r-1}$ 
and of the set of Jordan cells.  Here $\mathcal B^c_r$ resp. $\mathcal B^o_r$ denote the sets of closed resp. open bar codes in dimension $r.$  In addition we provide  calculation of some familiar topological invariants in terms of bar codes and Jordan cells. 
The presentation provides a different perspective on Morse--Novikov theory based on critical values, bar codes and Jordan cells rather than on critical points instantons and closed trajectories of a gradient of a real or angle valued map.
\end{abstract}

\maketitle

\setcounter{tocdepth}{1}
\tableofcontents

\section{Introduction}\label{S:intro}

Recently, using graph representations, a new type of invariants, \emph{bar codes} resp. \emph{bar codes} and \emph{monodromy (Jordan cells)}, have been assigned to a \emph{tame real valued map}
$f\colon X\to\mathbb R$ resp. a \emph{tame angle valued map} $f\colon X\to S^1$ 
and a field $\kappa$ \footnote{ More recent work which will be detailed in \cite {B12} provides a definition of bar codes without any reference to graph representations and extend of  the results below to continuous maps $f$ whose $X$ and the levels of $f$ are all compact ANR's.}.
 They were first introduced in \cite{CSD09} and \cite{BD11} as invariants for zigzag persistence resp.\ persistence for circle valued maps based on the changes in the homology of the fibers with coefficients in $\kappa.$

In this paper we define these invariants, establish additional results which relate them to  familiar topological invariants  and prove the homotopy invariance of the set of Jordan cells and 
of the numbers $ \sharp \mathcal B^c_r +\sharp \mathcal B^o_{r-1}.$ Here $\mathcal B^c_r$ resp.$\mathcal B^o_r$ denote the sets of closed resp. open bar codes in dimension $r.$ 

The main results are contained in Theorems~\ref{T2}, \ref{T3}, \ref{P1} and Corollary  \ref{T4}, presented in section~\ref{S1}.
The theory presented below represents an alternative approach to Morse--Novikov theory for real valued and angle valued maps based  on critical values instead of critical points.  In our approach the topological information about  the underlying space  is derived from  bar codes between critical values,  Jordan cells   and the canonical long exact sequence associated with  a tame map. Morse--Novikov theory cf. \cite {Fa04}, \cite{Pa06}, derives this information  from 
instantons (isolated trajectories) between critical points,  closed trajectories  and the Morse complex associated with the gradient of a Morse (real or circle valued) map on the underlying  Riemannian manifold.  
Our approach  applies to a considerably larger class of continuous maps than the maps considered by Morse--Novikov theory.

The tame real valued maps are  tame angle valued maps and all results about them are particular cases of results about angle valued maps.  Rather than consider only angle valued maps, considerably more complex, we decided  to discuss both cases, simply because the Morse theory of real valued maps is more familiar than Novikov theory of the circle valued maps and restricting the attention only to the second apparently does not save much space. 


The \emph{bar codes} are finite intervals $I$ of real numbers of the type:
\begin{enumerate}
\item closed, $[a, b]$, $a\leq b$,
\item open $(a,b)$, $a<b$, and
\item mixed $[a,b)$, $(a,b]$, $a<b$. 
\end{enumerate}


The \emph{Jordan blocks} $J$ and \emph{Jordan cells} are equivalency classes of pairs $(V,T)$ with $V$ a finite dimensional $\kappa$-vector space and $T\colon V\to V$ 
a linear isomorphism. 

An equivalence between $(V_1,T_1)$ and $(V_2, T_2)$ is an linear isomorphism $\omega\colon V_1\to V_2$ which intertwines $T_1$ and $T_2$.

An equivalence class is called a \emph{Jordan block} if indecomposable, i.e.\ $(V,T)$ is not isomorphic to $(V_1,T_2)\oplus(V_2,T_2)$ with $\dim V_i <\dim V$.

A Jordan block $J= (V,T)$ is called \emph{Jordan cell} if isomorphic to $(\kappa^k,T(\lambda,k))$, $k\in\mathbb N$, $\lambda\in\kappa\setminus 0$, where 
\begin{equation}\label{E1}
T(\lambda,k)=
\begin{pmatrix}
\lambda & 1       & 0      & \cdots  & 0      \\
0       & \lambda & 1      & \ddots  & \vdots \\
0       & 0       & \ddots & \ddots  & 0      \\
\vdots  & \ddots  & \ddots & \lambda & 1      \\
0       & \cdots  & 0      & 0       & \lambda
\end{pmatrix}
\end{equation}
in which case will be denoted by $J=(\lambda,k).$

If $\kappa$ is algebraically closed  then a Jordan block is a Jordan cells and the two concepts are the same \footnote{If $(V,T)$ is a Jordan block then 
$(V\otimes \overline \kappa, T\otimes \overline \kappa)$ is not necessary a Jordan cell but decomposes as a sum of Jordan cells.}.


For a tame real valued map $f\colon X\to\mathbb R$ and $r\leq\dim X$ we associate (see section~\ref{S1}) the collection of bar codes  
$\mathcal B_r(f)$. The set $\mathcal B_r(f)$ can be written as $\mathcal B_r(f)=\mathcal B^c_r(f)\sqcup\mathcal B^o_r(f)\sqcup \mathcal B^m_r(f)$ 
with $\mathcal B^c_r(f)$, $\mathcal B_r^o$ and  $\mathcal B^m_r$ the subset of closed,  open  and  mixed bar codes.


For a tame angle valued map $f$, in addition to bar codes as above, one associates the collections of Jordan blocks $\mathcal J_r(f)$, equivalently  
of Jordan cells $\overline{\mathcal J}_r(f)$ if one consider $\overline\kappa$ the algebraic closure of $\kappa$. The sum $(V_r(f), T_r(f))$ of all 
Jordan blocks in $\mathcal J_r(f)$ or of all Jordan cells in $\overline {\mathcal J}_r(f)$ is referred to as the \emph{monodromy} of $f$.

If $f$ is only a continuous map, in view of Theorem~\ref{T4},  $\mathcal J_r(f)$ resp.\ $\overline{\mathcal J}_r(f)$ can still be defined.  
It is expected (and will be shown in \cite{B12}) that the sets $\mathcal B^c_r(f)$, $\mathcal B^o_r(f).$ 

Theorem~\ref{T4} states  that  the numbers $N_r(f):= \sharp \mathcal B^c_r(f)+\sharp \mathcal B^o_{r-1}(f)$ are homotopy invariants  
of the pair $(X,\xi_f)$ where $\xi_f \in H^1(X;\mathbb Z)$ is the cohomology class determined by $f$.  
We say that the pairs $(X_i,\xi_i\in H^1(X, \mathbb Z))$, $i=1,2$, are homotopy equivalent, if there exists a homotopy equivalence 
$\theta\colon X_1\to X_2$ so that $\theta^\ast(\xi_2)=\xi_1$.  

Theorem~\ref{T4}, also states the homotopy invariance  for the monodromy. In view of these facts we might want to get a  
homotopy-theoretic description of the numbers $N_r(f)$ and of the monodromy $(V_r(f), T_r(f)).$

For this purpose consider $(X,\xi\in H^1(X;\mathbb Z))$ and denote by $\tilde X\to X$ the infinite cyclic cover associated to $\xi$. 
Note that $H_r(\tilde X):= H_r(\tilde X;\kappa)$ is not only a $\kappa$-vector space but is actually a $\kappa[T^{-1}, T]$-module where the 
multiplication by $T$ is induced by the deck transformation $\tau\colon\tilde X\to \tilde X$.
Here $\kappa[T^{-1},T]$ denotes the ring of Laurent polynomials. Let $\kappa[T^{-1},T]]$ be the field of Laurent power series. Define  
\[
H^N_r(X;\xi):=H_r(\tilde X)\otimes_{\kappa[T^{-1}, T]} \kappa[T^{-1}, T]]
\] 
and let 
\[ 
H_r(\tilde X)\to H^N_r(X;\xi)
\] 
be the $\kappa[T^{-1},T]$-linear map induced by taking the tensor product of $H_r(\tilde X)$ with $\kappa[T^{-1},T]]$ over $\kappa[T^{-1},T]$.

The $\kappa[T^{-1}, T]]$-vector spaces $H^N_r(X;\xi)$ are called Novikov homology\footnote{instead of $\kappa[T^{-1},T]]$ one can consider the field 
$\kappa[[T^{-1},T]$ of Laurent power series in $T^{-1}$, which is isomorphic to $\kappa[T^{-1},T]]$ by an isomorphism induced by $T\to T^{-1}$.  
The (Novikov) homology defined using this field has the same Novikov--Betti numbers as the the one defined using $\kappa[T^{-1}, T]]$.}
and their dimensions,  the numbers $N_r(X;\xi):=\dim H^N_r(X;\xi),$ Novikov--Betti numbers.

It $X$ is a compact ANR then 
the $\kappa$-vector space   
$V(\xi):=\ker (H_r(\tilde X)\to H^N_r(X;\xi))$ is finite dimensional and when equipped with  $T(\xi)\colon V(\xi)\to V(\xi)$ induced by the 
multiplication by $T$ defines the pair $(V(\xi),T(\xi))$ called  the monodromy of $(X,\xi)$.  We  show that  the numbers $N_r(f)$ are exactly $N_r(X;\xi_f)$ (cf.\ Theorem~\ref{T2}), and the pair $(V_r(f), T_r(f))$ described 
using graph representations 
is exactly the monodromy $(V(\xi_f),T(\xi_f)).$  

The monodromy  can be defined for an arbitrary continuous map $f\colon X\to S^1$, using instead of graph representations the  
regular part of a linear relation provided by the map $f$ in the homology of any fiber of $f$, as described in section~\ref{S6}.


The plan of this paper is the following.

In section~\ref{S1} we remind the reader the concepts of tame real and angle valued maps and formulate  the main results. 

In sections~\ref{S2} we discuss the representation theory for the two graphs, $\mathcal Z$ and $G_{2m}$, 
used  in the proof of the main theorems. The reader can skip section 4 unless he wants to understand the calculations of the bar codes and the Jordan cells for the example presented in section ~\ref{S7} via an implementable algorithm.   

In sections~\ref{S5} and \ref{S6} we prove the main results. 
Section \ref{S6} can be read independently of the rest of the paper. It does provide the necessary background on linear relations and does not use concepts previously defined.

In section~\ref{S7} we give an example of a tame angle valued map and derive its bar codes and Jordan cells using the algebraic observations made in section~\ref{S2}. 

{\it Acknowledgements:} The relationship between the topology of a space to the information extracted from the real or angle valued map as presented in this paper  was influenced by the {\it persistence theory}  introduced in \cite{ELZ02} and motivated by the interest that computer scientists and data analysts have shown for {\it persistent homology} and associated concepts. It also owns to the apparently forgotten efforts and ideas of R. Deheuvels  to extend Morse theory to all  continuous real valued functions (fonctionelles) cf. \cite{RD55}.

\section{The main results}\label{S1}

\subsection{Tame maps and its $r$-invariants.}\label{SS11}

\begin{definition}
A continuous map $f\colon X\to\mathbb R$ resp.\ $f\colon X\to S^1$, $X$ a compact ANR, is \emph{tame} if:
\begin{enumerate}[1.]
\item
Any fiber $X_\theta=f^{-1}(\theta)$ is the deformation retract of an open neighborhood. 
\item
Away from a finite set of numbers/angles $\Sigma=\{\theta_1,\dots,\theta_m\} \subset \mathbb R$, resp.\ $S^1$
the restriction of $f$ to $X\setminus f^{-1}(\Sigma)$ is a fibration (Hurewicz fibration).
\end{enumerate}
\end{definition}

Note that:  

-  Any smooth real or angle valued map on a compact smooth manifold $M$ whose all critical points are isolated,  in particular  any Morse function, is tame.

-  Any real or angle valued simplicial map on a finite simplicial complex is tame.

-  The space of tame maps with the induced topology has the same homotopy type as the space of all continuous maps with compact open 
topology.\footnote{While (i) and (ii) are simple exercises, we can not locate a reference for statement (iii), but since all ANR's of interest for this 
paper are homeomorphic to simplicial complexes, for them the statement follows from (ii).}

-  The set of tame maps is dense the  the space of all continuous maps with respect to the compact open topology.\footnote{The same comment as in footnote 3.}


Given a tame map $f\colon X\to\mathbb R$ resp.\  $f\colon X\to S^1$ consider the critical values resp.\ the critical angles $\theta_1<\theta_2<\cdots<\theta_m$.  
In the second case  we have $0<\theta_1<\cdots<\theta_m\leq 2\pi$. 
Choose $t_i$, $i=1,2,\dotsc,m$, with $\theta_1<t_1 <\theta_2<\cdots<t_{m-1}<\theta_m<t_m$.  In the second case choose $t_m$ s.t.\ $2\pi<t_m <\theta_1 +2\pi$. 
The tameness of $f$ induces the diagram of continuous maps:\footnote{The doted arrow $a_1$ in Diagram~1 appears only in the case of an angle valued map.}
\begin{center}
{\scriptsize
$$
\xymatrix{
    & \boldsymbol{X}_{t_m}\ar@{.>}[dl]_{a_1} \ar[dr]^{b_m}&     & \boldsymbol{X}_{t_{m-1}}\ar[dl]_{a_m}&\cdots \boldsymbol{X}_{\omega_{2}}\ar[dr]^{b_2}& & \boldsymbol{X}_{t_1}\ar[dl]_{a_{2}} \ar[dr]^{b_1} & \\
 \boldsymbol{X}_{\theta_1} &                &  \boldsymbol{X}_{\theta_m}&&& \boldsymbol{X}_{\theta_{2}}&   & \boldsymbol{X}_{\theta_1}
} _.
$$
}
Diagram~1
\end{center}

Different choices of $t_i$ lead to different diagrams but all homotopy equivalent.

We will use two graphs, $\mathcal Z$ for real valued maps, and $G_{2m}$ for angle valued maps. 
The graph $\mathcal Z$ has vertices $x_i$, $i\in\mathbb Z$, and edges $a_i$ from $x_{2i-1}$ to $x_{2i}$ and $b_i$ from $x_{2i+1}$ to $x_{2i}$,
see picture (The graph $\mathcal Z$) in section~\ref{S2}.


The graph $G_{2m}$ has vertices $x_i$ with edges $a_i$ and $b_i$, $i=1,2,\dotsc,(m-1)$, as before and $b_m$ from $x_1$ to $x_{2m}$, see picture (The graph $G_{2m}$) 
in section~\ref{S2}.


Let $\kappa $ be a field. A graph representation $\rho$ is an assignment which to each vertex $x$ assigns a finite dimensional vector space $V_x$  
and to each oriented arrow from the vertex $x$ to the vertex $y$ a linear map $V_x\to V_y$.

As stated in section~\ref{S2} a finitely supported $\mathcal Z$-representation\footnote{ i.e. all but finitely many vector spaces $V_x$ have dimension zero}, resp.\ an arbitrary $G_{2m}$-representation can be uniquely 
decomposed as a sum of indecomposable representations. In the case of the graph $\mathcal Z$ the indecomposable representations  are indexed by  
one of the three types of intervals (bar codes)  described in the introduction, with ends $i,j\in \mathbb Z$, $i\leq j$ for type (i) and $i<j$ for type (ii) and (iii). 
We refer to both the inedecomposable representation  and the interval  as \emph{bar code.}
In the case of the graph $G_{2m}$ the indecomposable representations  are indexed by similar intervals (bar codes) with ends $i,j +mk, $ $1\leq i,j\leq m, k\in \mathbb Z_{\leq 0}$, $i\leq j$ with $1\leq i\leq m$ 
and by Jordan blocks $J$ (or Jordan cells) as described in the introduction. 
We refer to both the indecomposable representation 
and the interval  resp. 
the Jordan block  as \emph{ bar code} resp. 
\emph{Jordan block} or {\emph Jordan cells}.

For a $\mathcal Z$-representation or a $G_{2m}$-representation $\rho$ one denotes by $\mathcal B(\rho)$ the set of all 
bar codes and write $\mathcal B(\rho)$ as $\mathcal B(\rho)=\mathcal B^c(\rho)\sqcup \mathcal B^o(\rho)\sqcup \mathcal B^m(\rho)$ where 
$\mathcal B^c(\rho)$, $\mathcal B^o(\rho)$ and  $\mathcal B^m(\rho)$ are the subsets of closed, open and mixed bar codes.

For a $G_{2m}$ representation $\rho$ one denotes by $\mathcal J(\rho)$ resp. $\overline{\mathcal J}(\rho)$ the set of all 
Jordan blocks resp. Jordan cells.


For any $r\leq \dim X$ let $\rho_r= \rho(f)$ be the $\mathcal Z$- resp.\ $G_{2m}$-representation associated to the tame map $f$  defined by
$$
V_{2i}= H_r(X_{\theta_i}),V_{2i+1}= H_r(X_{t_{i}}),\quad \alpha_i:V_{2i-1}\to V_{2i},\quad \beta_i:V_{2i+1}\to V_{2i}
$$
with $\alpha_i$ and $\beta_i$ the linear maps induced by the continuous maps $a_i$ and $b_i$ in Diagram~1.  
Here and below $H_r(Y)$ denotes the singular homology in dimension $r$ with coefficients in a fixed field $\kappa $ which will not appear in the notation.

In order to relate the indecomposable components of $\rho_r$ to the critical vales or angles of $f,$ for a real valued map one converts the intervals $\{i,j\}$  into $\{\theta_i,\theta_j\}$   and for an angle value map the intervals 
$\{i,j+km\}$, $1\leq i,j\leq m$,  into the intervals $\{\theta_i,\theta_j+2\pi k\}$ \footnote {we use the symbol "$\{$" for both "$($" and "$[ $" or  "$\}$" for both $")"$ or $" ] ".$}.

\vskip.1in
{\bf Definition:}  The sets $\mathcal B_r(f):= \mathcal B(\rho_r),$ with the intervals $I$ converted into ones with ends $\theta_i'$s and $(\theta_i +2\pi k)'$s and $\mathcal J_r(f):=\mathcal J(\rho_r)$ resp.\ 
$\overline{\mathcal J}_r=\overline{\mathcal J}(\rho_r)$ are the $r$-invariants of the map $f$. 
\vskip .1in
For a real valued map one has only
bar codes,  for an angle valued map one has bar codes  and Jordan blocks or Jordan cells.

We refer to 
$$
(V_r(f),T_r(f))=\bigoplus_{(V,J)\in \mathcal J_r(f)}(V,T)=\bigoplus_{(\lambda,k)\in \overline{\mathcal J}_r(f)}(\kappa^k,T(\lambda,k))
$$ 
as the \emph{$r$-monodromy} of the angle valued $f$.


Recall that the homotopy classes of continuous maps $f\colon X\to S^1$ are in bijective correspondence to $H^1(X;\mathbb Z)$ so any such map 
$f$ defines $\xi:=\xi_f \in H^1(X;\mathbb Z)$  and any homotopy class  can be viewed as an element in $H^1(X;\mathbb Z)$. 

\begin{definition}
1. Two maps $f_1\colon X_1\to S^1$ and $f_2\colon X_2\to S^1$ or $f_1\colon X_1\to\mathbb R$ and $f_2\colon X_2\to \mathbb R$ 
are fiber wise homotopy equivalent if there exists $\omega\colon X_1\to X_2$ so that 
$f_2\cdot\omega=f_1$ and for any $\theta\in S^1$ the restriction $\omega_\theta\colon(X_1)_\theta\to(X_2)_\theta$ is a homotopy equivalence.

2. Two maps $f_1\colon X_1\to S^1$ and $f_2\colon X_2\to S^1$ are homotopy equivalent if there exists $\omega\colon X_1\to X_2$  so that 
$f_2\cdot\omega$ is homotopic to $f_1$, equivalently $\omega^{\ast}(\xi_2)=\xi_{1}$, $\xi_i=\xi_{f_i}$. If so we say that the pairs 
$(X_1,\xi_1)$ and $(X_2,\xi_2)$ are homotopy equivalent.
\end{definition}

The following statement follows from   definitions.

\begin{proposition}
If $f_i\colon X_i\to \mathbb R$ resp.\ $f_i\colon X_i\to S^1$, $i=1,2$, are two tame maps and $\omega\colon X_1\to X_2$ is
a fiber wise homotopy equivalence then 
$\mathcal B_r(f_1)=\mathcal B_r(f_2)$ resp.\ $\mathcal B_r(f_1)=\mathcal B_r(f_2)$ and 
$\mathcal J_r(f_1)=\mathcal J_r(f_2)$ (equivalently $\overline{\mathcal J}_r(f_1)=\overline{\mathcal J_r}(f_2)$).
\end{proposition}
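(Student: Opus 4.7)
The plan is to show that $\omega$ induces an isomorphism of the associated graph representations $\rho_r(f_1)\cong\rho_r(f_2)$, after which the proposition follows from uniqueness of the decomposition into indecomposables recalled in Section~\ref{S2}. First I would choose a common finite set $\Sigma\subset\mathbb R$ (resp.\ $S^1$) with respect to which both $f_1$ and $f_2$ are tame — for instance $\Sigma_{f_1}\cup\Sigma_{f_2}$ — together with common intermediate regular values $t_i$. This produces Diagram~1 for $f_1$ and $f_2$ with the same underlying graph ($\mathcal Z$ or $G_{2m}$) and the same indexing, so that the two representations $\rho_r(f_1)$ and $\rho_r(f_2)$ are defined on identical combinatorial data.

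Next I would verify naturality. Since $f_2\circ\omega=f_1$, the map $\omega$ restricts for every value $\theta$ to $\omega_\theta\colon(X_1)_\theta\to(X_2)_\theta$, which is a homotopy equivalence by hypothesis, and more generally $\omega$ carries $f_1^{-1}(J)$ into $f_2^{-1}(J)$ for every open $J\subset\mathbb R$ (resp.\ $S^1$). In particular, for a small open neighborhood $J_i$ of each critical value $\theta_i$ containing $t_{i-1}$ and $t_i$ but no other element of $\Sigma$, the structural maps $a_i,b_i$ of Diagram~1 are realized on homology as the compositions
\[
H_r\bigl((X_j)_{t_i}\bigr)\longrightarrow H_r\bigl(f_j^{-1}(J_i)\bigr)\xrightarrow{\;\cong\;}H_r\bigl((X_j)_{\theta_i}\bigr),
\]
where the first arrow is induced by inclusion and the second is the inverse of the isomorphism induced by the homotopy equivalence $(X_j)_{\theta_i}\hookrightarrow f_j^{-1}(J_i)$ supplied by tameness condition~(1). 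Because $\omega$ strictly intertwines every inclusion appearing here, $H_r(\omega)$ commutes with all $a_i,b_i$, and hence realises itself as a morphism of $\mathcal Z$- (resp.\ $G_{2m}$-) representations $\rho_r(f_1)\to\rho_r(f_2)$. Since each component $H_r(\omega_\theta)$ is an isomorphism, the morphism of representations is an isomorphism.

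By uniqueness of the decomposition into indecomposables, isomorphic representations carry the same multiset of indecomposable summands, which gives at once $\mathcal B_r(f_1)=\mathcal B_r(f_2)$ and, in the angle valued case, $\mathcal J_r(f_1)=\mathcal J_r(f_2)$ and $\overline{\mathcal J}_r(f_1)=\overline{\mathcal J}_r(f_2)$. The main point where care is required is the middle step: one must check that the identification of $a_i,b_i$ with the composition above is independent of the auxiliary choice of the neighborhood $J_i$ and of the deformation retract onto $(X_j)_{\theta_i}$, so that the squares involving $\omega$ commute on the nose at the homology level. This is automatic, because the retraction $f_j^{-1}(J_i)\to(X_j)_{\theta_i}$ is a homotopy inverse of a canonical inclusion and therefore induces a canonical isomorphism on $H_r$; together with $f_2\circ\omega=f_1$, this forces commutativity of the relevant squares irrespective of the chosen retracts.
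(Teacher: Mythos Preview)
Your argument is correct and is precisely the natural way to unpack the paper's one-line remark that the statement ``follows from definitions.'' The paper gives no proof beyond that, so there is no alternative approach to compare against; you have simply supplied the details.

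One small point worth making explicit: when you replace the individual critical sets by $\Sigma=\Sigma_{f_1}\cup\Sigma_{f_2}$, you are computing $\mathcal B_r(f_i)$ with respect to an enlarged set of ``critical'' values, some of which are actually regular for $f_i$. You should note that this does not change $\mathcal B_r(f_i)$: at a fake critical value the adjacent structure maps $\alpha_k,\beta_k$ are isomorphisms (since $f_i$ is a fibration there), so no indecomposable summand can begin or end at that index, and the conversion of integer endpoints back to $\theta$-values yields the same bar codes as with the original $\Sigma_{f_i}$. This is routine, but strictly speaking the paper only asserts independence from the choice of the regular values $t_i$, not from enlarging $\Sigma$, so it is worth a sentence.
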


\vskip .1in

\subsection{The results.}\label{SS12}

Fix a field $\kappa$ and denote by $H_\ast (Y)$ the singular homology of $Y$ with coefficients  in the field $\kappa.$  The following result was established in \cite{BD11}.


\begin{theorem}[\cite{BD11}]\label{T1}

1. If $f\colon X\to S^1$ is a tame map then:
\begin{align*}
\hskip .4in a. &\dim H_r(X_\theta)
    &&=\sum_{I\in \mathcal B_r(f)}n_\theta(I)+\sum_{J\in\mathcal J_r(f)}k(J)
\\
\hskip .4in b.&\dim H_r(X)
&&=\begin{cases}
\sharp\mathcal B^c_r(f)+
\sharp\mathcal B^o_{r-1}(f)+\\
\sharp\bigl\{(\lambda,k)\in\overline{\mathcal J}_{r}(f)\bigm|\lambda(J)=1\bigr\}+\\
\sharp\bigl\{(\lambda,k)\in\overline{\mathcal J}_{r-1}(f)\bigm|\lambda(J)=1\bigr\}
\end{cases}
\\
\hskip .4in c.&\dim\im\bigl(H_r(X_\theta)\to H_r(X)\bigr)
&&=\sharp\bigl\{I\in \mathcal B^c_r(f)\bigm|\theta\in I\bigr\}
\end{align*}
where $n_{\theta}(I)=\sharp\{k\in\mathbb Z\mid\theta+2\pi k\in I\}$ and for $J=(V,T)$, $k(J)=\dim V$. 
\vskip .1in

2.  If $f\colon X\to\mathbb R$ is a tame map\footnote{A real valued map can be considered an angle valued by identifying $\mathbb R$ with $S^1\setminus 1$.} then:
\begin{align*}
a. &  \dim H_r(X_t)&&=\sharp\bigl\{I\in\mathcal B_r (f)\bigm|I\ni t\bigr\}
\\ 
b. &  \dim H_r(X)&&=\sharp\mathcal B^c_r(f)+\sharp\mathcal B^o_{r-1}(f)
\\
c. &  \dim\im\bigl(H_r(X_t)\to H_r(X)\bigr)
&&=\sharp\bigl\{I\in\mathcal B^c_r(f)\bigm| I \ni t\bigr\}.
\end{align*}
\end{theorem}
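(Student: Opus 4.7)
The plan is to exploit the unique decomposition $\rho_r(f)=\bigoplus_{I\in\mathcal B_r(f)}\rho_I\oplus\bigoplus_{J\in\mathcal J_r(f)}\rho_J$ into indecomposable summands and to verify each of (a), (b), (c) on a single indecomposable, then sum over all summands. All three quantities on the left-hand sides are additive under direct sums of $\mathcal Z$- or $G_{2m}$-representations (trivially for (a), and via the Mayer--Vietoris argument below for (b) and (c)), so this reduction is legitimate. The real-valued statement (2) is the special case of (1) in which no Jordan blocks occur, so I will argue the angle-valued case and let (2) follow by inspection.

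For (1a) and (2a), I would simply observe that $H_r(X_\theta)$ is by construction the value $V_x$ of $\rho_r(f)$ at the vertex $x$ corresponding to $\theta$, and that dimensions add over indecomposable summands. A bar code indexed by $I$ has value $\kappa$ at a vertex iff the vertex lies in its support, so translating vertex indices back to angles contributes exactly $n_\theta(I)$ (which reduces to the indicator of $\theta\in I$ in the real case). A Jordan cell $J=(V,T)$ in the $G_{2m}$-case has $V$ at every vertex, contributing $k(J)=\dim V$.

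For (1b) and (2b), I would present $X$ as the iterated double mapping cylinder of Diagram~1 and obtain a Mayer--Vietoris long exact sequence
\[
\cdots\to\bigoplus_i H_r(X_{t_i})\xrightarrow{\Phi_r}\bigoplus_i H_r(X_{\theta_i})\to H_r(X)\to\bigoplus_i H_{r-1}(X_{t_i})\xrightarrow{\Phi_{r-1}}\cdots
\]
with $\Phi$ assembled from the $\alpha_i,\beta_i$ of the graph representation, yielding $\dim H_r(X)=\dim\coker\Phi_r+\dim\ker\Phi_{r-1}$. Since $\Phi$ is a direct sum over indecomposables, both $\ker\Phi$ and $\coker\Phi$ may be computed summand by summand. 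A direct check on the standard indecomposable models gives: closed bar codes contribute $(0,1)$ to $(\dim\ker,\dim\coker)$, open bar codes contribute $(1,0)$, mixed bar codes contribute $(0,0)$, and a Jordan cell $(\lambda,k)$ contributes $(\delta_{\lambda,1},\delta_{\lambda,1})$, because on such a summand $\Phi$ reduces after an obvious change of basis to the cyclic operator $(u_1,\ldots,u_m)\mapsto(u_1-u_2,\ldots,u_{m-1}-u_m,u_m-T u_1)$, whose kernel and cokernel both have dimension $\dim\ker(T-\Id)$. Summing over indecomposables at degrees $r$ and $r-1$ then yields (1b) and (2b).

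For (1c) and (2c), the map $H_r(X_\theta)\to H_r(X)$ factors through the surjection $\bigoplus_i H_r(X_{\theta_i})\twoheadrightarrow\coker\Phi_r$, and its image on each indecomposable summand is read off from the same classification: a closed bar code $I$ contributes one dimension precisely when $\theta\in I$, while open and mixed summands have vanishing cokernel and so contribute nothing to the image. The main obstacle throughout will be the explicit identification of indecomposable representations of $\mathcal Z$ and (especially) $G_{2m}$, together with careful bookkeeping of signs and indices around the $G_{2m}$-cycle; once that classification is in hand (section~\ref{S2}), each of the summand-by-summand verifications becomes a short local calculation.
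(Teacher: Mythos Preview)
For parts (a) and (b) your approach is essentially the paper's: both reduce to indecomposables and, for (b), invoke the Mayer--Vietoris short exact sequence $0\to\coker M(\rho_r)\to H_r(X)\to\ker M(\rho_{r-1})\to 0$ together with the summand-by-summand computation of kernel and cokernel (the paper's Propositions~\ref{AP02} and~\ref{AP2}). For part (c) the paper does not argue directly as you do but instead derives it as the special case of Theorem~\ref{T3}(1) in which $[a,b]$ degenerates to a point.

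There is a genuine gap in your treatment of (1c): you analyze closed, open and mixed bar codes but omit the Jordan-cell summands. By your own calculation in (b), a Jordan cell $(\lambda,k)$ with $\lambda=1$ has one-dimensional cokernel, and the map from any single even vertex space $V_{2l}\cong\kappa^k$ to that cokernel (namely the composite $\kappa^k\hookrightarrow\bigoplus_iV_{2i}\twoheadrightarrow\kappa^k/\img(T-\Id)$) is visibly surjective. Hence your factorization through $\coker\Phi_r$ actually produces the extra term $\sharp\{J\in\overline{\mathcal J}_r(f)\mid\lambda(J)=1\}$ on the right-hand side of (1c). This is exactly what Theorem~\ref{T3}(1c) gives when specialized to a single point, and it indicates that the stated formula (1c) in Theorem~\ref{T1} is missing that term; your argument, carried through completely, would have exposed this discrepancy rather than silently matched the statement. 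In the real-valued case (2c) there are no Jordan cells, so your direct argument goes through without issue.
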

\vskip .2in


Consider $\xi_f\in H^1(X;\mathbb Z)$ the cohomology class represented by $f$ and for any $u\in\kappa\setminus 0$ denote by $u\xi_f$ the rank one representation  
\begin{equation}\label{E111}
u\xi_f\colon H_1(M;\mathbb Z)\to\mathbb Z\to\kappa\setminus0=\GL_1(\kappa)
\end{equation} 
with the last arrow given by ${n\to u^n}$.  Denote by $H_r(X;u\xi_f)$ the $r$-homology with coefficients in this representation
which is a $\kappa$-vector space. 
Theorem~\ref{T1} can be extended to the following theorem.

\begin{theorem}\label{T2}  If $f\colon X\to S^1$ is a tame map then:

1.  
\begin{equation*}
\dim H_r(X; u\xi_f)=\begin{cases} 
\sharp\mathcal B^c_r(f)+ 
\sharp\mathcal B^o_{r-1}(f)+\\ 
\sharp\{J\in \overline{\mathcal J}_{r}(f)| u\lambda(J)=1\}+\\
\sharp\{J\in\overline{\mathcal J}_{r-1}(f)|u^{-1}\lambda(J)=1\}.
\end{cases}
\end{equation*}

2. 
\begin{equation*} 
N_r(X;\xi_f)=
\sharp\mathcal B^c_r(f)+\sharp\mathcal B^o_{r-1}(f).
\end{equation*}
\end{theorem}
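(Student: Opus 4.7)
The plan is to reduce both parts to an analysis of $H_*(\tilde X)$ as a module over the PID $R:=\kappa[T^{-1},T]$, where $\tilde X\to X$ is the infinite cyclic cover associated with $\xi_f$ and $T$ acts via the deck transformation. For part~1, observe that $H_r(X;u\xi_f)$ is the homology of $C_*(\tilde X)\otimes_R\kappa_u$ with $\kappa_u:=R/(T-u)$; since $X$ is a compact ANR, $C_*(\tilde X)$ is $R$-chain-equivalent to a finite complex of finitely generated free $R$-modules, and the universal coefficient sequence over the PID $R$ yields
\begin{equation*}
\dim H_r(X;u\xi_f)=\dim\bigl(H_r(\tilde X)\otimes_R\kappa_u\bigr)+\dim\mathrm{Tor}^R_1\bigl(H_{r-1}(\tilde X),\kappa_u\bigr).
\end{equation*}
For part~2, the Novikov field $R':=\kappa[T^{-1},T]]$ is an $R$-flat field extension, so $H^N_r(X;\xi_f)=H_r(\tilde X)\otimes_R R'$ has dimension equal to the $R$-rank of $H_r(\tilde X)$.

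The crux is the structure of $H_r(\tilde X)$ as an $R$-module, which I would show takes the form
\begin{equation*}
H_r(\tilde X)\cong R^{a_r}\oplus\bigoplus_{J\in\mathcal J_r(f)}R\bigl/\bigl(T-\lambda(J)\bigr)^{k(J)}\oplus T_r
\end{equation*}
with $a_r:=\sharp\mathcal B^c_r(f)+\sharp\mathcal B^o_{r-1}(f)$ and $T_r$ a torsion module coming from mixed bar codes, having no summand of the form $R/(T-u)^k$ for any $u\in\kappa\setminus 0$. This decomposition would be obtained by lifting the $G_{2m}$-representation $\rho_r(f)$ to the $\mathbb Z$-equivariant $\mathcal Z$-representation $\rho_r(\tilde f)$ of the lifted tame real-valued map $\tilde f\colon\tilde X\to\mathbb R$, and invoking unique indecomposable decomposition. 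Each closed bar code in $\mathcal B^c_r(f)$ lifts to a $\mathbb Z$-orbit of closed bars in $\rho_r(\tilde f)$, giving a free $R$-summand of rank one; each open bar code in $\mathcal B^o_{r-1}(f)$ lifts to a $\mathbb Z$-orbit producing a free rank-one $R$-summand in degree $r$ via the same boundary mechanism that produces the $\sharp\mathcal B^o_{r-1}$ term in Theorem~\ref{T1}.2.b applied to $\tilde f$; each Jordan cell in $\mathcal J_r(f)$ contributes the indicated $R$-torsion summand; and mixed bar codes give torsion that is annihilated by $\otimes_R\kappa_u$ for every $u\in\kappa\setminus 0$.

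Granted this structure, part~1 becomes bookkeeping with $\otimes_R\kappa_u$ and $\mathrm{Tor}^R_1(-,\kappa_u)$: a free $R$-summand contributes one to $\otimes$ and zero to Tor, while $R/(T-\lambda)^k$ contributes one to each of $\otimes$ and Tor when $u=\lambda$ and zero otherwise. Plugging into the universal coefficient formula and separating the degree-$r$ and degree-$(r-1)$ contributions produces the four summands in part~1, with the switch $u\mapsto u^{-1}$ between the two Jordan-cell terms arising from the asymmetric roles of $H_r$ versus $H_{r-1}$ in $\otimes$ versus Tor, together with the convention relating the $T$-action on $H_*(\tilde X)$ to $\xi_f$. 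Part~2 follows either by tensoring the above decomposition with $R'$, which kills every torsion summand and leaves $(R')^{a_r}$, or by setting $u$ generic in part~1 so that $u\lambda(J)\neq 1$ for all Jordan cells in degrees $r$ and $r-1$.

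The main obstacle is the structure theorem for $H_r(\tilde X)$: one must promote the $G_{2m}$-decomposition of $\rho_r(f)$ to a chain-level decomposition of $C_*(\tilde X)$ that is simultaneously $\mathbb Z$-equivariant and compatible with the tame filtration by lifted regular fibers, and then verify that open bar codes in degree $r-1$ genuinely produce free rank-one $R$-summands in degree $r$ rather than further torsion. This is the angle-valued analogue of the argument behind Theorem~\ref{T1}.2.b, and once it is in place the remainder of the proof is linear algebra over the PID $R$.
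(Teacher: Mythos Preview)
Your route is genuinely different from the paper's, and for Part~1 it is more circuitous. The paper does \emph{not} pass through the $R$-module structure of $H_*(\tilde X)$ for Part~1 at all. Instead it models $X$ as an iterated mapping torus, applies Mayer--Vietoris directly on $X$ with local coefficients $u\xi_f$, and obtains a short exact sequence
\[
0\to\coker M\bigl((\rho_r)_u\bigr)\to H_r(X;u\xi_f)\to\ker M\bigl((\rho_{r-1})_u\bigr)\to 0,
\]
where $(\rho_r)_u$ is the $G_{2m}$-representation obtained from $\rho_r$ by replacing $\alpha_1$ with $u\alpha_1$. A short elementary computation (Proposition~\ref{O38}) then reads off $\dim\ker$ and $\dim\coker$ of $M((\rho)_u)$ on each indecomposable summand: closed bars contribute to $\coker$, open bars to $\ker$, mixed bars to neither, and a Jordan cell $(\lambda,k)$ contributes exactly when $u\lambda=1$. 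That is the entire proof of Part~1; no structure theorem for $H_*(\tilde X)$ is needed. Part~2 is then deduced from Theorem~\ref{T3}, which is essentially the structure theorem you are proposing as your ``main obstacle''. So your plan inverts the paper's logical order, making the easier Part~1 depend on the harder Theorem~\ref{T3}.

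Two specific points about your sketch deserve correction. First, mixed bar codes contribute \emph{nothing} to $H_r(\tilde X)$: in the paper's decomposition (Theorem~\ref{T3}(3)) one has $H_r(\tilde X)\cong R^{a_r}\oplus V_r(f)$ with no extra torsion term $T_r$; the lifted mixed intervals have $d\ker=d\coker=0$ over every truncation and simply vanish in the limit. Second, your explanation of the $u\leftrightarrow u^{-1}$ discrepancy between the degree-$r$ and degree-$(r-1)$ Jordan terms is not right: the universal coefficient theorem produces no such asymmetry, since both $R/(T-\lambda)^k\otimes_R\kappa_u$ and $\mathrm{Tor}^R_1\bigl(R/(T-\lambda)^k,\kappa_u\bigr)$ are one-dimensional exactly when $\lambda=u$. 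The paper's own derivation via Proposition~\ref{O38} also gives the \emph{same} condition $u\lambda(J)=1$ in both degrees, so the asymmetry in the displayed statement should not be something your argument tries to reproduce.
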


\vskip .2in

Denote by:
$\tilde f\colon\tilde X\to\mathbb R$ the infinite cyclic cover of $f\colon X\to S^1$.
\[
\begin{CD}\label{D1}
\tilde X                           @>\tilde f>> \mathbb R\\
@V\psi VV                                                       @Vp VV\\
X @>f >>            \mathbb S^1
\end{CD}
\]

Let $\tilde X_{[a,b]}:=\tilde f^{-1}{[a,b]}$, $\tilde X_t=f^{-1}(t)$. Clearly one has $\tilde X_t=X_{p(t)}$.


Denote by: 
\begin{align*}
\tilde{\mathcal B}_r(f)&=\bigl\{ I+2\pi k\bigm|k\in \mathbb Z, I\in \mathcal B_r(f)\bigr\},
\\ 
\tilde{\mathcal B}^c_r(f)&=\bigl\{ I+2\pi k\bigm|k\in \mathbb Z, I\in \mathcal B^c_r (f)\bigr\},
\\
\tilde{\mathcal B}^o_r(f)&=\bigl\{ I+2\pi k\bigm|k\in \mathbb Z, I\in \mathcal B^o_r (f)\bigr\}.
\end{align*}

We have

\begin{theorem}\label {T3}    
 If $f\colon X\to S^1$ is a tame map then:

1.
\begin{align*}
a. & \dim H_r(\tilde X_{[a,b]})
&&=\begin{cases} 
\sharp\{I\in \tilde{\mathcal B}_r(f), I\cap [a,b] \ne\emptyset \}+\\
\sharp\{I\in \tilde{\mathcal B}^o_{r-1}(f), I\subset [a,b]\}+\\ 
\sum_{J\in \mathcal J_r(f)}k(J).
\end{cases}
\\
b.& \dim\im\bigl(H_r(\tilde X_{[a,b]})\to H_r(\tX)\bigr) 
&&=\begin{cases} 
\sharp\{I \in\tilde{\mathcal B}^c_r(f), I\cap [a,b] \ne\emptyset\}+\\
\sharp\{I\in \tilde{\mathcal B}^o_{r-1}(f), I\subset [a,b]\}+\\ 
\sum_{J\in \mathcal J_r(f)} k(J).
\end{cases}
\\
c.& \dim \im\bigl(H_r(\tilde X_{[a,b]}) \to H_r(X)\bigr) 
&&=\begin{cases}
\sharp\{I\in \mathcal B_r^c,[a,b]\cap (I+2\pi k)\ne 0\}+\\
\sharp\{I\in \mathcal B^o_{r-1}\mid   I+2\pi k \subset [a,b]\}+\\
\sharp\{J\in \overline{\mathcal J}_{r}(f)| \lambda(J)=1\}
\end{cases}
\end{align*}

2.  
$V_r(\xi_f):=\ker (H_r(\tilde X)\to H^N_r(X;\xi_f))$ is a finite dimensional $\kappa$-vector space and $(V_r(\xi_f), T_r(\xi_f))=(V_r(f), T_r(f))$ 

3. 
$H_r(\tilde X)=\kappa[T^{-1},T]^N\oplus V_r(\xi_f)$ as $\kappa[T^{-1}, T]$-modules with $N=N_r(f)=\sharp{\mathcal B^c_r(f)} + \sharp{\mathcal B^o_{r-1}(f)}$.
\end{theorem}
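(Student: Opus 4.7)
The plan is to transport the decomposition of the $G_{2m}$-representation $\rho_r(f)$ up to the infinite cyclic cover, where it becomes the $\mathcal Z$-representation $\rho_r(\tilde f)$, and then invoke Theorem~\ref{T1}.2 applied to the tame real-valued restriction $\tilde f|_{\tilde X_{[a,b]}}\colon\tilde X_{[a,b]}\to[a,b]$. The cyclic cover of $G_{2m}$ is $\mathcal Z$, with the deck transformation acting as a shift by $2m$ vertices, or equivalently translation by $2\pi$ in the converted $\theta$-coordinates. The indecomposable decomposition lifts as follows: each bar code $I\in\mathcal B_r(f)$ lifts to the $\mathbb Z$-orbit $\{I+2\pi k\}_{k\in\mathbb Z}$ of translated bar codes, while each Jordan block $J=(V,T_J)\in\mathcal J_r(f)$ lifts to a single $\mathbb Z$-equivariant indecomposable $\mathcal Z$-representation carrying a copy of $V$ at every vertex, whose monodromy around one fundamental domain equals $T_J$.

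For part~1, only finitely many critical values of $\tilde f$ lie in $[a,b]$, so $\tilde f|_{\tilde X_{[a,b]}}$ is tame and Theorem~\ref{T1}.2 applies. Its bar codes are computed by truncating $\rho_r(\tilde f)$: every bar code $I+2\pi k\in\tilde{\mathcal B}_r$ meeting $[a,b]$ survives, with any endpoint lying outside $[a,b]$ clipped and converted to a closed endpoint, and each lifted Jordan cell of dimension $k$ splits upon truncation into exactly $k$ closed bar codes, because the monodromy-coupling is destroyed by restriction to a proper sub-interval. Substituting these counts into Theorem~\ref{T1}.2.a gives 1(a). Formulas 1(b) and 1(c) follow by the same bookkeeping, tracking which classes survive the inclusion-induced maps to $H_r(\tilde X)$ respectively $H_r(X)$; the Jordan-cell correction term in 1(c) picks up exactly those $(\lambda,k)\in\overline{\mathcal J}_r(f)$ with $\lambda=1$, since only then does the monodromy fix a nonzero class under the projection $\tilde X\to X$.

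For parts~2 and~3, pass to the limits $a\to-\infty$, $b\to+\infty$. The above decomposition is $\kappa[T^{-1},T]$-equivariant: each closed bar code $I\in\mathcal B^c_r(f)$ contributes a free rank-one summand $\kappa[T^{-1},T]\cdot[\tilde I]$ of $H_r(\tilde X)$ on which $T$ acts as translation by $2\pi$, each open bar code $I\in\mathcal B^o_{r-1}(f)$ contributes a similar free summand (via the boundary-map piece of a Mayer--Vietoris argument, as in the proof of Theorem~\ref{T1}.2.b), and each Jordan cell $(\lambda,k)\in\overline{\mathcal J}_r(f)$ contributes a torsion summand $\kappa[T^{-1},T]/(T-\lambda)^k$ on which $T$ acts as $T(\lambda,k)$. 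This yields the direct sum decomposition of part~3, with $N=\sharp\mathcal B^c_r(f)+\sharp\mathcal B^o_{r-1}(f)$. For part~2, the element $T-\lambda$ is a unit in the Novikov field $\kappa[T^{-1},T]]$, so tensoring kills every torsion summand while preserving the free summands. Hence $V_r(\xi_f)=\ker(H_r(\tilde X)\to H^N_r(X;\xi_f))$ is exactly the sum of the torsion summands, equipped with the action of $T$, which reproduces the monodromy $(V_r(f),T_r(f))$.

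The main obstacle is Step~3, where we must promote the finite-interval analysis of part~1 to a $\kappa[T^{-1},T]$-module statement on the full cyclic cover. In particular, one must verify that the open $(r-1)$-dimensional bar codes really do yield free rank-one $\kappa[T^{-1},T]$-summands of $H_r(\tilde X)$, not interacting with the Jordan cell contributions, and that the direct limit of the decompositions of $H_r(\tilde X_{[a_k,b_k]})$ as $b_k-a_k\to\infty$ converges to the asserted direct-sum decomposition of $H_r(\tilde X)$. This requires a careful Mayer--Vietoris argument for the covering $\tilde X=\bigcup_k\tilde X_{[a_k,b_k]}$, together with the naturality of the representation-theoretic decomposition under restriction between nested intervals.
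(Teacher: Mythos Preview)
Your proposal follows essentially the same route as the paper: lift $\rho_r(f)$ to the $\mathcal Z$-representation $\tilde\rho_r$, truncate to $T_{a,b}(\tilde\rho_r)$ via Observation~\ref{O39}, apply the real-valued analysis to $\tilde f|_{\tilde X_{[a,b]}}$, and then pass to the direct limit over nested intervals to obtain the $\kappa[T^{-1},T]$-module structure on $H_r(\tilde X)$.

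The technical gap you flag in your final paragraph is exactly the point the paper works hardest on. The issue is that the isomorphism $H_r(\tilde X_{[a,b]})\cong\coker M(T_{a,b}(\tilde\rho_r))\oplus\ker M(T_{a,b}(\tilde\rho_{r-1}))$ depends on a choice of splitting of the short exact sequence, and there is no reason a priori that these splittings can be chosen compatibly as $[a,b]$ grows. The paper resolves this (Observation~\ref{O51} and the surrounding discussion) by first checking that the right-hand map $v_r$ on kernels is injective, and then constructing the splittings $s_{[a,b]}$ explicitly: for each open bar code $I=(\alpha,\beta)\in\tilde{\mathcal B}^o_{r-1}$ one chooses once and for all a lift $s_I\in H_r(\tilde X_{[\alpha,\beta]})$, and defines $s_{[a,b]}$ by pushing these forward under the inclusion $\tilde X_{[\alpha,\beta]}\hookrightarrow\tilde X_{[a,b]}$. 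This guarantees $v\cdot s_{[a,b]}=s_{[c,d]}\cdot v_r$ whenever $[a,b]\subset[c,d]$, so the decompositions assemble coherently in the limit and Diagram~4 commutes. Without this step, your limit argument is only a heuristic; with it, your outline becomes the paper's proof.
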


\begin{obs}
Theorem \ref{T3} (1)  remains true if one replaces  a  closed interval by  a finite union of closed intervals  (possibly points).
\end{obs}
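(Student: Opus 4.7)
My plan is to reduce the generalization to the single-interval case of Theorem~\ref{T3}(1) via the obvious disjoint-union decomposition of $\tilde X_K$. Write the finite union as $K = \bigsqcup_{j=1}^n K_j$ with $K_j = [a_j,b_j]$ pairwise disjoint closed intervals (possibly degenerate to points), ordered so that $b_j < a_{j+1}$. Since the $K_j$ are closed and pairwise separated in $\mathbb{R}$ and $\tilde f$ is continuous, the preimages $\tilde X_{K_j} := \tilde f^{-1}(K_j)$ are pairwise disjoint open-and-closed subsets of $\tilde X_K$. Consequently
\[
\tilde X_K \;=\; \bigsqcup_{j=1}^n \tilde X_{K_j},
\qquad
H_r(\tilde X_K) \;=\; \bigoplus_{j=1}^n H_r(\tilde X_{K_j}).
\]

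For part (a), applying Theorem~\ref{T3}(1)(a) to each $K_j$ and summing immediately yields the desired formula, with the following interpretation: an open bar code $I \in \tilde{\mathcal B}^o_{r-1}(f)$ is contained in $K$ iff it is contained in a unique component $K_j$ (by connectedness), hence counted once; a bar code $I \in \tilde{\mathcal B}_r(f)$ meeting $K$ is counted with multiplicity equal to the number of components it meets, which correctly reflects the direct-sum dimension on the left. For parts (b) and (c), the inclusion $\tilde X_K \hookrightarrow \tilde X$ (respectively the composite $\tilde X_K \to \tilde X \to X$) factors through the disjoint sum of the inclusions $\tilde X_{K_j} \hookrightarrow \tilde X$, so the total image is the \emph{sum}, not the direct sum, of the individual images. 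The key point is that the one-dimensional subspace of $H_r(\tilde X)$ associated to a closed bar code $I \in \tilde{\mathcal B}^c_r(f)$ is intrinsic to $I$ and does not depend on which component $K_j$ realizes it; hence such an $I$ contributes once to the image when it meets $K$, producing a non-multiplicity count. The same reasoning, together with the descent modulo the $2\pi\mathbb{Z}$-translation action, handles the formula in (c) for images in $H_r(X)$.

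The main obstacle I anticipate is the bookkeeping of multiplicities: one must verify that the canonical bar-code generators constructed in the proof of Theorem~\ref{T3}(1) (carried out in Section~\ref{S5}) depend only on the bar code itself and not on the interval $K_j$ from which it is extracted. Granted this canonicity, the extension to finite unions follows immediately from the disjoint-union decomposition above, and no new topological ideas are required beyond those already used for single intervals.
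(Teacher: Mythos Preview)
The paper states this Observation without proof, so there is no argument to compare against; your disjoint-union approach is the natural one and is essentially correct.

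One point you should make explicit for part~(a): when you sum the formula over the $n$ components $K_j$, the Jordan-cell term $\sum_{J\in\mathcal J_r(f)}k(J)$ appears once for \emph{each} component, so the total contribution is $n\cdot\sum_J k(J)$. You note the analogous multiplicity for bar codes in $\tilde{\mathcal B}_r(f)$ but omit this. Thus part~(a) does not literally ``remain true'' under naive textual substitution of $K$ for $[a,b]$; rather, the correct statement is exactly what your summation produces, and you should write it out.

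For parts~(b) and~(c) your reasoning is right: the image in $H_r(\tilde X)$ (resp.\ $H_r(X)$) is the \emph{sum} of the component images, so a closed bar code or a Jordan-cell generator that arises from several $K_j$ contributes only once. The canonicity you flag as the ``main obstacle'' is precisely what the paper supplies in Section~\ref{S5}: the compatible splittings $s_{[a,b]}$ and the isomorphisms $\Psi_r([a,b])$ of Observation~\ref{O51} are constructed so that the generator in $H_r(\tilde X)$ attached to a given $I\in\tilde{\mathcal B}^c_r$, a given $I\in\tilde{\mathcal B}^o_{r-1}$, or a given Jordan cell is independent of the ambient interval (this is the content of the commutativity in Diagrams~3 and~4 and the explicit choice of $s_I$ made there). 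Granting that, parts~(b) and~(c) hold verbatim with $K$ in place of $[a,b]$, and your argument goes through.
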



As a consequence we have the main result of this paper:

\begin{corollary}\label{T4} 
If $f_1,f_2\colon X_i\to S^1$ are two homotopy equivalent tame maps, then: 

1. $\sharp\mathcal B^c_r (f_1)+\sharp\mathcal B^o_{r-1}(f_1)=\sharp\mathcal B^c_r(f_2)+\sharp\mathcal B^o_{r-1}(f_2)$.

2. $\mathcal J_r(f_1)=\mathcal J_r(f_2)$. 
\end{corollary}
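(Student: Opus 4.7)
The plan is to reduce both statements to the homotopy invariance of $H_r(\tilde X)$ viewed as a $\kappa[T^{-1},T]$-module. By Theorem~\ref{T2}(2) the sum $\sharp\mathcal B^c_r(f)+\sharp\mathcal B^o_{r-1}(f)$ equals the Novikov--Betti number $N_r(X;\xi_f)$, an invariant defined from $(X,\xi_f)$ with no reference to $f$ itself. Similarly, Theorem~\ref{T3}(2) identifies the $r$-monodromy $(V_r(f),T_r(f))$ with $(V_r(\xi_f),T_r(\xi_f))$, the kernel of $H_r(\tilde X)\to H^N_r(X;\xi_f)$ equipped with the action of the deck transformation. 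Both quantities are thus functorial invariants extracted from the $\kappa[T^{-1},T]$-module $H_r(\tilde X)$; by Theorem~\ref{T3}(3) the Novikov--Betti number is the rank of the free part while $V_r(\xi_f)$ is the torsion part.

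Given a homotopy equivalence $\theta\colon X_1\to X_2$ with $\theta^{\ast}\xi_2=\xi_1$, the next step is to lift $\theta$ to a $\mathbb Z$-equivariant continuous map $\tilde\theta\colon\tilde X_1\to\tilde X_2$ between the associated infinite cyclic covers. The identity $\theta^{\ast}\xi_2=\xi_1$ is precisely the obstruction that must vanish for such a lift to exist; a choice of base point in each cover fixes $\tilde\theta$ uniquely. The same argument applied to a homotopy inverse $\theta'$ of $\theta$, and to the homotopies $\theta'\theta\simeq\id$, $\theta\theta'\simeq\id$ (which pull the classes $\xi_i$ back to themselves), yields an equivariant lift $\tilde{\theta'}$ and equivariant homotopies $\tilde{\theta'}\tilde\theta\simeq\id$, $\tilde\theta\tilde{\theta'}\simeq\id$. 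Thus $\tilde\theta$ is a $\mathbb Z$-equivariant homotopy equivalence.

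Consequently $\tilde\theta_{\ast}\colon H_r(\tilde X_1)\to H_r(\tilde X_2)$ is an isomorphism of $\kappa[T^{-1},T]$-modules. Tensoring with $\kappa[T^{-1},T]]$ yields an isomorphism $H^N_r(X_1;\xi_1)\cong H^N_r(X_2;\xi_2)$, and restriction to the kernels of the respective maps into Novikov homology yields a $T$-equivariant isomorphism $V_r(\xi_1)\cong V_r(\xi_2)$. Combined with Theorems~\ref{T2}(2) and \ref{T3}(2), this proves (1) and gives $(V_r(f_1),T_r(f_1))\cong(V_r(f_2),T_r(f_2))$ as pairs; since the set of Jordan blocks of a pair $(V,T)$ is a complete invariant of its isomorphism class, this proves (2).

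The main obstacle is the equivariant lifting step; everything else is bookkeeping. The subtlety is not the existence of a continuous lift $\tilde\theta$ — this is standard covering theory once $\theta^{\ast}\xi_2=\xi_1$ is assumed — but rather verifying that homotopies downstairs can be lifted to equivariant homotopies upstairs compatibly enough to produce a two-sided equivariant homotopy inverse, so that the induced map on $H_r$ really is an isomorphism of $\kappa[T^{-1},T]$-modules and not merely of $\kappa$-vector spaces.
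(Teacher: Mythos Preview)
Your proof is correct and follows essentially the same route as the paper: both reduce Corollary~\ref{T4} to the homotopy invariance of the $\kappa[T^{-1},T]$-module $H_r(\tilde X)$ via Theorem~\ref{T3}. The only minor difference is that for Part~1 the paper invokes Theorem~\ref{T1}(1b) together with Part~2 (subtracting the Jordan-cell contributions from $\dim H_r(X)$), whereas you use Theorem~\ref{T2}(2) directly; your route is slightly more direct, and you are also more explicit than the paper about the equivariant lifting of the homotopy equivalence to the infinite cyclic covers.
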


One can provide an alternative geometric description of the equivalence class of pairs $(V_r(f),T_r(f))$.    
Start with the tame map $X\to S^1$ representing the cohomology class $\xi\in H^1(X;\mathbb Z)$ and choose 
an angle $\theta$. Consider the compact space $X^{f,\theta}$ the cut of $X$ along $X_\theta$.  Precisely 
as a set this is the disjoint union $X^-_\theta \sqcup (X\setminus X_\theta) \sqcup X^+_\theta$ with the 
$X^\pm_\theta$  copies of $X_\theta$. The topology of $X^{f,\theta}$ is the obvious topology.\footnote{This is the unique topology which induces on 
$X\setminus X_\theta$, $X^\pm_\theta$ the same topology as $X$ and makes of $f^{-1}((\theta-\epsilon, \theta])$ resp.\ $f^{-1}([\theta,\theta+\epsilon)$
for $\epsilon$ small, neighborhoods of $X^-_\theta$ resp.\ $X_\theta^+$ in $X^{f,\theta}$.}

We have the two inclusions $i^-\colon X_\theta\to X^{f,\theta}$ and $i^+\colon X_\theta\to X^{f,\theta}$, which induce the linear maps 
$(i^-)_r\colon H_r(X_\theta)\to H_r(X^{f,\theta})$ resp.\ $(i^+)_r\colon H_r(X_\theta)\to H_r(X^{f,\theta})$.
These two linear maps define the linear relation $\mathcal R^\theta_r\subset H_r(X^-_\theta)\oplus H_r(X^+_\theta)$ defined by 
$(i^-)_r(x^-)=(i^+)_r (x^+)$, $x^\pm\in H_r(X^\pm_\theta)$, cf.~section~\ref{S6} for definitions.
\vskip .2in 

\begin{theorem}\label{P1} 
The regular part \footnote{ The regular part of a linear relation $\mathcal R\subset W\times W$ is a sub relation $\mathcal R^{reg}\subset V\times V$ 
which is given by an isomorphism $T\colon V\to V$ and is maximal, cf.~section~\ref{S6}.} of the relation $\mathcal R^\theta$ is isomorphic to $(V_r(f),T_r(f))$. 
\end{theorem}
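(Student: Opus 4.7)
The plan is to reduce Theorem~\ref{P1} to Theorem~\ref{T3}(2), which identifies the monodromy $(V_r(f),T_r(f))$ with the $\kappa[T^{-1},T]$-torsion submodule of $H_r(\tilde X)$ equipped with the deck action.

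First I would identify the cut space $X^{f,\theta}$ with the fundamental domain $\tilde X_{[\theta,\theta+2\pi]}$ of the infinite cyclic cover $\psi\colon\tilde X\to X$. Under this identification $X_\theta^-$ corresponds to the fiber $\tilde X_\theta$, $X_\theta^+$ to $\tilde X_{\theta+2\pi}=\tau(\tilde X_\theta)$ (where $\tau$ is the generator of the deck group), and the inclusions $i^\pm$ become the inclusions of the two boundary fibers. Since $\tilde X=\bigcup_{n\in\mathbb Z}\tau^n(\tilde X_{[\theta,\theta+2\pi]})$ is glued $\mathbb Z$-equivariantly along the translates of $\tilde X_\theta$, a Mayer--Vietoris argument produces an exact sequence of $\kappa[T^{-1},T]$-modules
\begin{equation*}
\cdots\to H_r(X_\theta)\otimes\kappa[T^{-1},T]\xrightarrow{\delta_r}H_r(X^{f,\theta})\otimes\kappa[T^{-1},T]\to H_r(\tilde X)\to H_{r-1}(X_\theta)\otimes\kappa[T^{-1},T]\to\cdots
\end{equation*}
with $\delta_r(x\otimes 1)=(i^+)_r(x)\otimes 1-(i^-)_r(x)\otimes T$. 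By the very definition, $\mathcal R^\theta_r$ coincides with the kernel of the linear map $((i^-)_r,-(i^+)_r)\colon H_r(X_\theta)\oplus H_r(X_\theta)\to H_r(X^{f,\theta})$.

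Next I would exploit the decomposition of the $G_{2m}$-representation $\rho_r$ into indecomposables (Section~\ref{S2}) to identify the contributions of each summand to both $\mathcal R^\theta_r$ and $H_r(\tilde X)$. By Theorem~\ref{T3}(3), $H_r(\tilde X)\cong\kappa[T^{-1},T]^N\oplus V_r(f)$, where the free summand is generated by the bar-code indecomposables and the torsion $V_r(f)$ by the Jordan-block indecomposables. On a bar-code summand at least one of $(i^-)_r,(i^+)_r$ fails to be an isomorphism at the vertex corresponding to $\theta$, so the piece of $\mathcal R^\theta_r$ it contributes is either single-valued but non-invertible or multi-valued, and hence lies outside the regular part. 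A Jordan-block summand, on the other hand, has all its edge maps invertible, and the pair $((i^-)_r,(i^+)_r)$ at the vertex corresponding to $\theta$ consists of two isomorphisms; the resulting piece of $\mathcal R^\theta_r$ is the graph of the composition $(i^+)_r^{-1}\circ(i^-)_r$, which is an isomorphism and hence lies in the regular part. Assembling these contributions, $\mathcal R^{\reg}_r$ is the graph of an isomorphism $T$ on a vector space isomorphic to $V_r(f)$, and chasing this isomorphism through the Mayer--Vietoris sequence identifies $T$ with the deck action restricted to the torsion submodule $V_r(f)\subset H_r(\tilde X)$, which by Theorem~\ref{T3}(2) is $T_r(f)$.

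The hard part will be the algebraic bookkeeping of the previous paragraph: for each of the indecomposable summands of $\rho_r$ classified in Section~\ref{S2}, one must compute explicitly the sub-relation of $H_r(X_\theta)\oplus H_r(X_\theta)$ it contributes and verify that only Jordan-cell summands land in $\mathcal R^{\reg}_r$. This requires the detailed structure of the indecomposables of $G_{2m}$ together with the definitions of kernel, multi-valued part, and regular part of a linear relation provided in Section~\ref{S6}. Once this block-wise matching is in place, the isomorphism asserted in Theorem~\ref{P1} follows by invoking Theorem~\ref{T3}(2).
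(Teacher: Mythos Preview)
Your strategy is sound but genuinely different from the paper's. You work representation-theoretically: decompose the $G_{2m}$-representation $\rho_r$, track how each indecomposable contributes to the relation $\mathcal R^\theta_r$, and invoke Theorem~\ref{T3}(2). The paper instead develops a self-contained homological argument in Section~\ref{S6} that makes no use of the $G_{2m}$-decomposition. It introduces the submodules $K_\pm,D_\pm\subseteq H_*(X_\theta)$ intrinsic to any linear relation, identifies them via Mayer--Vietoris with kernels of maps $H_*(X_\theta)\to H_*(\tilde X_{[\tilde\theta,\infty)})$, $H_*(X_\theta)\to H_*^{\Nov,+}(\tilde X_{[\tilde\theta,\infty)})$, etc.\ (Lemmas~\ref{L:5}--\ref{L:7}), and then proves directly (Theorem~\ref{T:monreg}) that $R_\reg$ is isomorphic to $\ker\bigl(H_*(\tilde X)\to H_*^{\Nov,\pm}(\tilde X)\bigr)$ with its deck action. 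Only then is Theorem~\ref{T3}(2) invoked. This buys greater generality: Theorem~\ref{T:monreg} holds for any continuous $f$ with a single tame value $\theta$, not just tame maps, and it yields an intrinsic Novikov-homological description of $R_\reg$ that your approach does not see.

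One point in your sketch deserves a caution. You argue that on a bar-code summand ``at least one of $(i^-)_r,(i^+)_r$ fails to be an isomorphism \dots\ and hence lies outside the regular part.'' That inference is not valid as stated: a linear relation that is neither single-valued nor injective can still have nontrivial regular part (take $R=\{((a,b),(a,c)):a,b,c\in\kappa\}$ on $\kappa^2$, whose regular part is $(\kappa,\id)$). What you actually need is that the relation contributed by a bar-code summand, viewed as a $G_2$-representation, decomposes into bar-code (non-isomorphism) indecomposables only; this is what the paper isolates as Proposition~\ref{P:C}, and the explicit check uses the list of $G_2$-indecomposables. You correctly flag this bookkeeping as the hard part, but the heuristic you offer for it should be replaced by that computation.
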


As a consequence  of  Theorems~\ref{T2} and \ref{T4} and of the fact that any homotopy class contains tame maps we have:

\begin{corollary}\label {Cor}
1. If a tame angle valued map is homotopic to a fibration there are no closed and no open bar codes. 

2. For any tame map   $f: X\to S^1$ one has $\beta_r(X)-\mathcal J^1_r(f) - \mathcal J^1_{r-1}(f)=N_r(f)$.
\end{corollary}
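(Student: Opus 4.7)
My plan is to derive both assertions from the machinery already assembled, treating~(2) as essentially a rearrangement of identities already established and concentrating the real work on~(1).

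For~(2), Theorem~\ref{T1}(1b) reads
$$\beta_r(X) = \sharp \mathcal B^c_r(f) + \sharp \mathcal B^o_{r-1}(f) + \mathcal J^1_r(f) + \mathcal J^1_{r-1}(f),$$
where $\mathcal J^1_s(f) := \sharp\{J \in \overline{\mathcal J}_s(f) \mid \lambda(J)=1\}$. Combined with the identification $N_r(f) = \sharp \mathcal B^c_r(f) + \sharp \mathcal B^o_{r-1}(f)$ from Theorem~\ref{T2}(2), this rearranges into the stated formula with no further input.

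For~(1), my strategy is to exploit the structural decomposition of $H_r(\tilde X)$ from Theorem~\ref{T3}(3). The infinite cyclic cover $\tilde X \to X$ is determined by $\xi_f \in H^1(X;\mathbb Z)$, and therefore only by the homotopy class of $f$. If $f$ is homotopic to a fibration $g\colon X \to S^1$, then $\tilde X$ is (up to homotopy equivalence) the pullback of $g$ along the universal covering $p\colon \mathbb R \to S^1$, which fibers over the contractible base $\mathbb R$ with fiber $F$. Hence $\tilde X \simeq F$, a compact ANR, and consequently $H_r(\tilde X) = H_r(F)$ is finite-dimensional over $\kappa$.

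On the other hand, Theorem~\ref{T3}(3) applied to the tame map $f$ produces
$$H_r(\tilde X) = \kappa[T^{-1},T]^{N_r(f)} \oplus V_r(\xi_f)$$
as $\kappa[T^{-1},T]$-modules, with $V_r(\xi_f)$ finite-dimensional over $\kappa$. Since $\kappa[T^{-1},T]$ is infinite-dimensional over $\kappa$, finite-dimensionality of $H_r(\tilde X)$ forces $N_r(f)=0$ for every $r$, that is, $\sharp \mathcal B^c_r(f) + \sharp \mathcal B^o_{r-1}(f) = 0$. Both summands being non-negative, this simultaneously eliminates all closed and all open bar codes in all degrees. The only delicate step is the homotopy-invariant identification of $\tilde X$ with the fibre $F$ of a fibration representative while invoking the algebraic decomposition for the tame map $f$ itself; this is underwritten by the fact that $\tilde X$ depends only on $\xi_f$, so its homotopy type can be computed from any convenient map in the homotopy class.
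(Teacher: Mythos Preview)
Your proof is correct and essentially follows the route the paper indicates. For part~(2) you only need the \emph{definition} $N_r(f):=\sharp\mathcal B^c_r(f)+\sharp\mathcal B^o_{r-1}(f)$ together with Theorem~\ref{T1}(1b); invoking Theorem~\ref{T2}(2) is harmless but unnecessary. For part~(1) the paper points to Theorems~\ref{T2} and~\ref{T4}, which amounts to using the homotopy invariance of the Novikov--Betti numbers $N_r(X;\xi_f)$ and observing that these vanish when $\tilde X$ has finite-dimensional homology; you reach the same conclusion one step earlier via the module decomposition of Theorem~\ref{T3}(3), which is what underlies Theorem~\ref{T2}(2) anyway. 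An alternative, slightly more direct reading of the paper's hint is that a fibration is itself tame (take a single ``critical'' angle) with all $\alpha_i,\beta_i$ isomorphisms, so its representations $\rho_r$ are regular and carry no bar codes, whence Corollary~\ref{T4} transfers $N_r=0$ to any homotopic tame $f$; your covering-space argument achieves the same end.
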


\subsection{ Organizing the closed and open bar codes}

For $f \colon X\to\mathbb R$ resp.\ $f\colon X\to\mathbb S^1$ a tame map Theorem~\ref{T1}(4.) resp.\ Theorem~\ref{T2}(2.) suggest to collect together 
the closed $r$-bar codes and the open $(r-1)$-bar codes as configuration of points in $\mathbb R^2$ resp. $\mathbb T=\mathbb R^2/\mathbb Z.$ 
Precisely $\mathbb T$ is the quotient space of $\mathbb R^2$ the Euclidean plane, by the additive group of integers $\mathbb Z$, w.r.\ 
to the action $\mu: \mathbb Z\times \mathbb R^2 \to \mathbb R^2$ given by $\mu(n;(x,y))=(x+2\pi n,y+2\pi n).$

One denotes by $\Delta \subset \mathbb R^2$ resp.$\Delta \subset \mathbb T$ the diagonal of $\mathbb R^2$ resp. the quotient of the diagonal of $\mathbb R^2$ by the group $\mathbb Z$.
The points above or on diagonal, $(x,y)$, $x\leq y$,  will be used to record closed bar codes $[x,y]$ and 
the points below the diagonal, $(x,y)$, $x>y$, to record open bar codes $(y,x)$. This convention comes from the observation that continuous deformation of tame maps can produce deformation of an $r$-closed bar code $[x, y]$ into an $(r-1)$-open bar code $(y', x')$ but not without passing through a closed bar codes  with equal ends (located on $\Delta$) $[x'', y''],\ \ x''=y''.$ 


One can identify $\mathbb T$ with $\mathbb C\setminus 0$ sending the point of $\mathbb T$ represented by the pair $(x,y)$ to $e^{(y-x) +  i x}\in \mathbb C$.  
Clearly, $\Delta$ became the circle of radius $1$.

For an integer $k$ and $X$ a space, in our case $X=\mathbb T$ or $\mathbb R^2$, denote by $S^k(X)$ the $k$-th symmetric power of $X$, i.e.\ the quotient space 
$X^n/\Sigma_n$ with $\Sigma_n$ the symmetric group acting on $X^n$ by permutations and $X^n=\underbrace{X\times\cdots\times X}_n$.
 

In view of Theorem~\ref{T1}(4.) resp.\ Theorem~\ref{T2}(2.) for a tame real resp. angle valued map $f$  and any $r$ 
 we will collect the closed $r$-barcodes and the open $(r-1)$-bar codes as a point $C_r(f)\in S^{\beta_r(X)}(\mathbb R^2)$ where $\beta_r(X)=\dim H_r(X)$ resp.\ as a point 
$Cr(f) \in S^{\N_r(X; \xi_f)} (\mathbb T)$.
 If we identify a point in $(x,y)\in R^2$ with $z=x+iy$ it is convenient to regard $C_r(f)$ as 
the monic polynomial $P^f_r(z)$ of degree $\beta_r(X)$ whose roots are the elements of $C_r(f)$.  
Similarly, using the identification of $\mathbb T$ with $\mathbb C\setminus 0$  it is convenient to regard  
$C_r(f)$ as a monic  polynomial of degree $N_r(X;\xi_f)$. 
\vskip .1in
 
For a generic set  of continuous maps $f\colon X\to \mathbb C\setminus 0$  both $|f|\colon X\to \mathbb R$ and $f/|f|\colon X\to S^1$ 
are tame and consequently one obtains for any $r$ the pair of polynomials $(P_r(f),P_r(f/|f|))$ which can be viewed as refinements 
of Betti numbers of $X$ and of Novikov--Betti numbers of $(X;\xi_{f/|f|})$.

\vskip .1in

One expects that the assignment $f \rightsquigarrow C_r(f)$ defined on the space $T(X;\mathbb R)$ resp.\ $T(X;\mathbb S^1)$ of tame maps be continuous  
with respect to the compact open topology and extends by continuity to $C(X;\mathbb R)$ resp. $C(X; S^1).$  In particular one expects that the closed and open bar codes 
as read off $Cr(f)$  be defined for any continuous map $f\colon X\to\mathbb R$ resp. $f\colon X\to S^1,$  hence the  polynomials  considered  above for tame maps 
can be defined for any continuous map $f$  whose source and levels are compact ANR's and depend continuously on $f$.  This will be shown to be true in \cite{B12}. 

\section{Graphs representations}\label{S2}

In this section we summarize known facts about the representations of two graphs, $\mathcal Z$ and $G_{2m}$ and formulate some technical 
used in the proof of Theorems~\ref{T1}, \ref{T2}, \ref{T3}.

We consider two oriented graphs, $\Gamma=\mathcal Z$ whose vertices are $x_i$, $i\in\mathbb Z$, 
and arrows $a_i\colon x_{2i-1}\to x_{2i}$ and $b_i\colon x_{2i+1}\to x_{2i}$ 
\begin{center}
$$
\xymatrix{
\cdots & x_{2i-1}\ar[l]_-{b_{i-1}} \ar[r]^-{a_i} & x_{2i} & x_{2i+1}\ar[l]_-{b_{i}} \ar[r]^-{a_{i+1}} & x_{2i+2} & \cdots \ar[l]_-{b_{i+1}}
}
$$
The graph $\mathcal Z$
\end{center}
and $\Gamma=G_{2m}$ whose vertices are $x_1,x_2,\dotsc,x_{2m}$ and arrows $a_i$, $1\leq i\leq m$, and $b_i$, $1\leq i\leq m-1$, as above and $b_M\colon x_1\to x_{2m}$.  
\begin{center}
$$
\xymatrix{ &x_2\\
x_3\ar[ur]_{b_1}\ar[d]^{a_2}& & x_1\ar[ul]^{a_1}\ar[d]_{b_m}\\
x_4& & x_{2m}\\
x_{2m-3} \ar@{<.>}[u]\ar[dr]^{a_{m-1}}& & x_{2m-1}\ar[u]^{a_m}\ar[dl]_{b_{m-1}}\\
& x_{2m-2}
}
$$
The graph $G_{2m}$
\end{center}

Let $\kappa$ a fixed field. 

A $\Gamma$-representation $\rho$ is an assignment which to each vertex $x$ of $\Gamma$ assigns a finite dimensional vector space $V_x$  
and to each oriented  arrow  from the vertex $x$ to the vertex $y$ a linear map $V_x\to V_y$. 
The concepts of morphism, isomorphism= equivalence, sum, direct summand, zero and nontrivial representations are obvious.

A $\mathcal Z$-representation is given by the collection 
\begin{equation*}
\rho:=
\begin {cases}
\begin {aligned}  
V_r,\quad \alpha_i:V_{2i-1}\to &V_{2i}, \quad \beta_i:V_{2i+1}\to V_{2i}\\ 
r, i \in &\mathbb Z  \ \ ,
\end{aligned}
\end{cases}
\end{equation*}
abbreviated to $\rho=\{V_r,\alpha_i,\beta_i\}$, while a $G_{2m}$ representation by the collection 
\begin{equation*}
\rho:=
\begin {cases}
\begin {aligned}  
V_r,\quad \alpha_i:V_{2i-1}\to V_{2i}, \quad \beta_i:V_{2i+1}\to V_{2i} \\
1\leq r \leq 2m, \quad 1\leq i\leq m, \quad V_{2m+1}= V_1
\end{aligned}
\end{cases}
\end{equation*}
also abbreviated to $\rho=\{V_r,\alpha_i,\beta_i \}.$

A representation $\rho$ is \emph{regular} if all the linear maps $\alpha_i$ and $\beta_i$ are isomorphisms.  

Any regular $G_{2m}$-representation $\rho=\{V_r,\alpha_i,\beta_i\}$ is equivalent to the representation 
\begin{equation}\label{R1}
\rho(V,T) = \{V'_r=V, \alpha'_1= T, \alpha'_i=Id\   i\ne 1, \  \beta'_i= Id\}
\end{equation}
with $T=\beta^{-1}_m\cdot \alpha_m^{-1}\cdots \beta_1^{-1}\cdot\alpha_1$ \footnote{ 
The isomorphism is provided by the linear maps $\omega_r\colon V_r\to V_r$ given by 
\begin{equation*}
\begin{cases}
\begin{aligned}
\omega_1= &Id\\
\omega_2= &\beta_m^{-1}\cdots \beta^{-1}_2\cdots \alpha_2\cdot \beta_1^{_1}\\
\omega_3= &\beta_m^{-1}\cdots \beta^{-1}_2\cdots \alpha_2\\
&\cdots\\
\omega_{2m}=& \beta^{-1}
\end{aligned}
\end{cases}
\end{equation*}.}.

A $\mathcal Z$-representation $\rho$ has \emph{finite support} if $V_i=0$ for all but finitely many $i$.  
There are no nontrivial regular $\mathcal Z$-representations with finite support.

For $\mathcal Z$-representation $\rho=\{V_r,\alpha_i,\beta_i\}$ we denote by $T_{k,l}(\rho)$, $k\leq l$ the representation with finite support 
$T_{i,j}(\rho)=\{V'_r,\alpha'_i,\beta'_i\}$ defined by 
\begin{equation}
\begin{aligned}
V'_r=&\begin{cases} V_r  \ \  2k\leq r\leq 2l\\
 0 \ \ \rm{otherwise}
\end{cases}\\
\alpha'_r=&\begin{cases} \alpha_r  \ \  k+1\leq i\leq l\\
 0 \ \ \rm{otherwise}
\end{cases}\\
\beta'_r=&\begin{cases} \beta_r  \ \  k\leq r\leq l-1\\
 0 \ \ \rm{otherwise}.
\end{cases}
\end{aligned}
\end{equation}

A  representation $\rho$ is \emph{indecomposable} 
if not the sum of two nontrivial representations. It is well known and not 
hard to prove that any $\mathcal Z$-representation with finite support and any $G_{2m}$-representation  
can be uniquely decomposed in a finite sum of indecomposable representations (the Remack--Schmidt theorem) and 
these indecomposables are unique up to isomorphism cf.~\cite{HDJW}.


{\bf The indecomposable $\mathcal Z-$representations with finite support}  are indexed by four type of intervals  $I$ with ends $i$ and $j$ and denoted  by $\rho(I)$ or more precisely by:

1. $ \rho([i,j])  $,   \quad    2. $ \rho([i,j))  $,   \quad   3.  $ \rho((i,j])  $ and \quad  4. $  \rho(i,j) $ 

\noindent with  $ i\leq j$ in case (1.) and  $ i<j $  for the cases (2., 3., 4.) above.  They 
have all vector spaces  either one dimensional or zero dimensional  and the linear maps $\alpha_i, \beta_j$  the identity if both the source and the target are nontrivial and zero otherwise.
Both the indexing interval $I$ and the representation $\rho(I)$ will be called {\it bar codes}.

Precisely, 

\begin{enumerate}
\item 
$ \rho([i,j]) , i\leq j$  has 
$V_r= \kappa$ for $r = \{2i, 2i+1, \cdots 2j\}$  and $V_r=0$ if $r\ne [2i,2j]$
\item
$ \rho([i,j)) , i< j $ has
$V_r= \kappa$ for $r = \{2i, 2i+1, \cdots 2j\}$  and $V_r=0$ if $r\ne [2i,2j-1]$
\item 
$ \rho((i,j]) , i< j $ has
$V_r= \kappa$ for $r = \{2i, 2i+1, \cdots 2j\}$   and $V_r=0$ if $r\ne [2i+1,2j]$
\item 
$ \rho((i,)]) , i< j $ has
$V_r= \kappa$ for $r = \{2i, 2i+1, \cdots 2j\}$  and $V_r=0$ if $r\ne [2i+1,2j-1]$
\end{enumerate}
 with all $\alpha_i$ and $\beta_i$ the identity  provided that the source and the target are  both non zero.

The above description is implicit in \cite{G72}.

Denote by $\mathcal B(\rho)$ the collection of bar codes which appear as direct summands of $\rho$, and by  $\mathcal B^c(\rho),$ resp. $\mathcal B^o(\rho),$ resp, $\mathcal B^m(\rho)$ the subsets of $\mathcal B(\rho)$ consisting of bar codes with both ends closed, resp. open resp. one open one closed.
By Remack- Schmidt theorem any $\mathcal Z-$ representation $\rho$ can be uniquely written as 
\begin{equation}\label{E3}
\rho= \sum_{I\in\mathcal B(\rho)}  \rho(I).
\end{equation}
\vskip .2in
{\bf The indecomposable $G_{2m}-$ representations } are of two types, type I and type II.

{\bf Type I:} ({\it bar codes})
For any triple of integers $\{i,j,k\}$, $1 \leq i,j \leq m,$ $k\geq 0$, we have the representations  denoted by 

\begin{enumerate}
\item 
$\rho^I([i, j];k) \equiv \rho^I([i, j+mk]),\quad 1\leq i, j\leq m, k\geq 0$
\item  $\rho^I((i,j];k) \equiv  \rho^I((i, j+mk]),\quad 1\leq i, j\leq m, k\geq 0$ 
\item  $\rho^I([i,j);k) \equiv  \rho^I([i, j+mk)),\quad 1\leq i, j\leq m, k\geq 0$ 
\item $\rho^I((i,j);k)  \equiv  \rho^I((i, j+mk)),\quad 1\leq i, j\leq m, k\geq 0$ 
\end{enumerate}
 described as follows.

Suppose the vertices  of $G_{2m}$ are located counter-clockwise on the unit circle
with  evenly indexed vertices $\{x_2, x_4, \cdots x_{2m}\}$ 
 corresponding  to the angles 
$0< s_1< s_2 <\cdots <s_{m} \leq2\pi.$ 
Draw the spiral curve 
 for $a= s_i$ and $b=s_j+2\pi k$ with the ends a black or an empty circle if the end is closed or open 
(see picture below for $k=2$).

\begin{figure}[h]
\includegraphics [height=4.5cm]{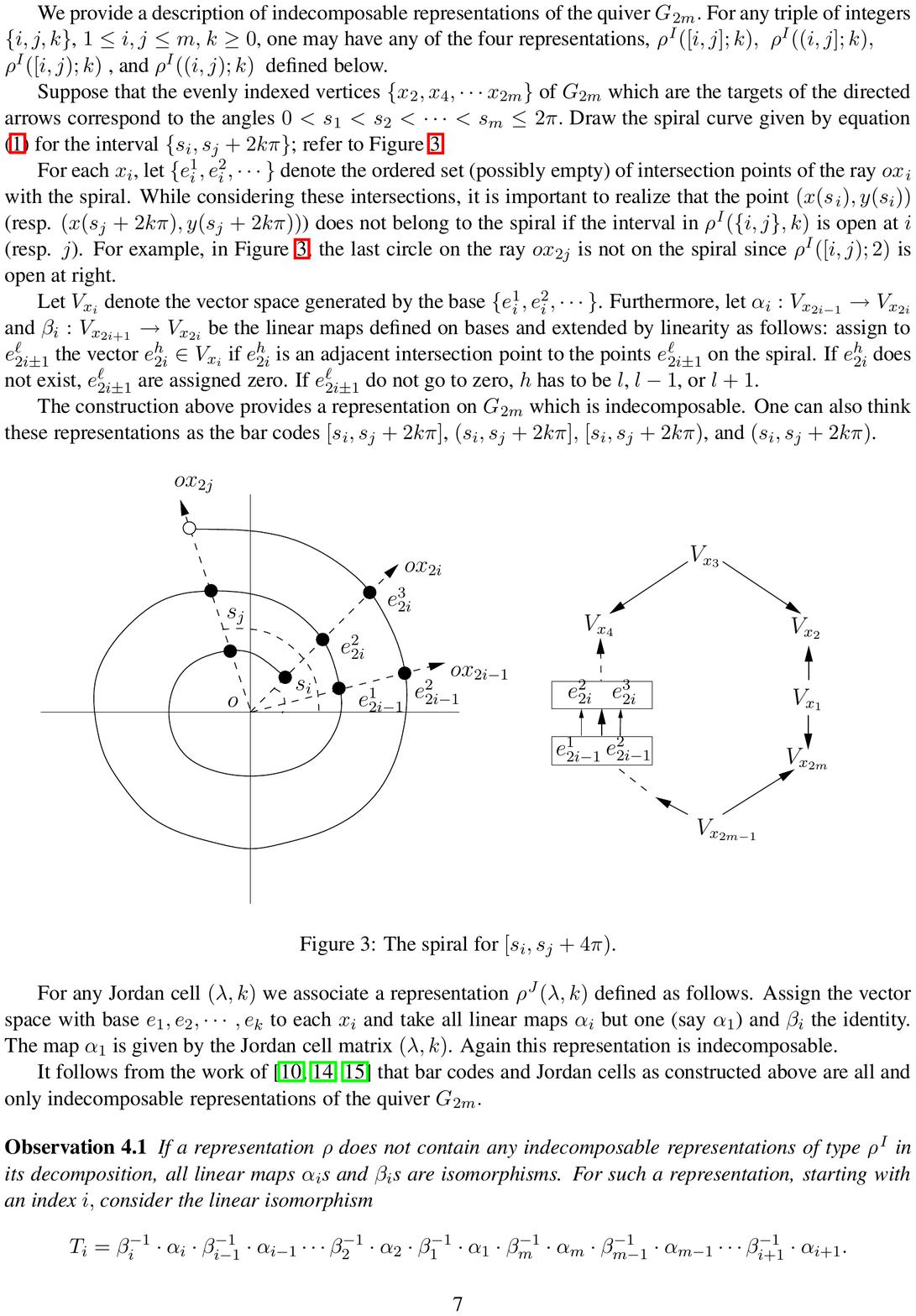}
\caption{The spiral for $[i, j+2m)$.}
\label{barcode}
\end{figure}

Denote by $V_i$ the vector space generated by the intersection points of the spiral with the radius corresponding to the vertex $x_i$ and let $\alpha_i$ resp.$\beta_i$ be defined on bases  in an obvious manner ; a generator $e$ of $V_{2i\pm1}$ is sent to the generator $e'$ of $V_{2i}$ if connected by a piece of spiral  and to $0$ otherwise.
\vskip .2in

{\bf Type II: } The representations  of  Type II  are regular representations associated to a Jordan block 
$J=(V, T)$  cf. formula (\ref{R1}) and denoted by $\rho^{II}(J)$.  They are clearly indecomposable.

In consistency with the above conventions we refer to both $J=(V,T)$ and the representation $\rho^{II}(J)$ as {\it Jordan block}.  
If the eigenvalues of $T$ are in $\kappa,$ in particular if $\kappa$ is algebraically closed, $J= (V,T)$ is indecomposable iff $T$ is conjugate to $T(\lambda, k)$ defined by formula  (\ref {E1}) for some $\lambda\in \kappa\setminus 0$ and in this case  
we will write $\rho^{II}(\lambda, k)$ for the representation  $\rho^{II}(\kappa^k, T(\lambda,k)).$ In consistency with the above convention  we refer to both, the representation $\rho^{II}(\lambda, k)$ and  the pair  $(\lambda, k)$ as {\it Jordan cell}. 
 If $\kappa$ is not algebraically closed and $(V,T)$ is a Jordan block  then $(V\otimes \overline\kappa, T\otimes \overline\kappa) $, $\overline \kappa$ the algebraic closure of $\kappa,$ does not necessary remain a Jordan block. However it decomposes uniquely as a finite sum  of Jordan cells. Two Jordan blocks are equivalent iff they remain equivalent after tensored by $\overline\kappa,$ equivalently the associated Jordan cells over $\overline\kappa$ are the same.  

By Remack-Schmidt theorem  
any $G_{2m}-$representation $\rho$ can be uniquely decomposed as 

\begin{equation}\label{E4}
\begin{aligned}
\rho= \bigoplus_{I\in \mathcal B(\rho)} \rho^I(I)\oplus \bigoplus_{J\in\mathcal J(\rho)} \rho^{II}(J).
\end{aligned}
\end{equation}
The above description is implicit in \cite {N73} and \cite {DF73}.

Introduce 
$$\rho_{reg}= \bigoplus_{J\in\mathcal J(\rho)} \rho^{II}(J)$$
\noindent with $\rho_{reg}= \rho(V_{reg}(\rho), T_{reg}(\rho)).$
The pair $(V_{reg}(\rho), T_{reg}(\rho))$ is also referred to as the {\it monodromy}  of $\rho.$ 

\vskip .1in

In \cite{BD11} an algorithm to provide the decomposition of a $G_{2m}-$representation as a sum of indecomposable is described. The algorithm holds for $\mathcal Z-$representations  too and is based on four elementary transformations $T_1(i), T_2(i), T_3(i), T_4(i)$ described for the reader convenience in the Appendix.  They will be used  in section \ref{S7}. 
Performing any of  these transformations one passes from a representation $\rho$ to a representation $\rho'$  of  strictly smaller dimension (of the total vector space  $\oplus _i V_i$ or $\oplus_{1\leq i\leq 2m} V_i$), with  the same monodromy (in case of $G_{2m}-$representation) and with bar codes changed in a specified way. After applying such transformations  finitely many time one ends up with a regular representation 
and  by  backward  book keeping, one can reconstruct the initial collection of bar codes too. 
\vskip .2in

\vskip .1in

To a $\mathcal Z-$ representation $\rho= \{V_r, \alpha_i, \beta_i\} $ $ r,i\in \mathbb Z$ one associates the linear transformation $M(\rho): \oplus  V_{2i-1}\to \oplus V_{2i}$  given by the infinite block matrix  with entries 

\begin{equation}
M(\rho)_{2r-1, 2s}=
 \begin{cases}
&\alpha_r, \ \ \ \rm { if } \ \ s=r\\
& \beta_{r-1}, \rm {if} \ s=r-1\\
& 0 \ \ \ \ \ \ \rm {otherwise} \ .
\end{cases}
\end{equation}

To 
a $G_{2m}-$ representation  $\rho=\{ V_r, \alpha_i, \beta_i\}$ $1\leq r\leq 2m, 1\leq i\leq m.$
 one  associates the block  matrix 
$M(\rho):  \bigoplus_{1\leq i\leq m} V_{2i- 1} \to   
\bigoplus _{1\leq i\leq m}V_{2i}$ 
defined by: 

\begin{equation*}
\begin{pmatrix}

\alpha_1& -\beta_1 &      0       &\dots  & \dots &0\\
         0    & \alpha_2 &-\beta_2&\dots   &  \dots &0\\
     \vdots&\vdots       &\vdots     &\vdots &\vdots & \\
      0        &\dots        &\dots       &\dots    &\dots \alpha_{m-1}&-\beta_{m-1}\\
      -\beta_m&\dots    &\dots       &\dots   & \dots &\alpha_m
         \end{pmatrix} _.
         \end{equation*}
\vskip .1in
For a $\Gamma= \mathcal Z \ \rm {or}\  G_{2m}-$ representation $\rho$ denote  by:
\begin{enumerate}
 \item  $\dim(\rho): \Gamma \to \mathbb Z_{\geq 0}$ the function
  defined by $r \rightsquigarrow  \dim(V_r)$ 

 \item $n_i:=\dim(V_{2i-1})$ and $r_i:= \dim (V_{2i}).$ 

 \item $d\ker (\rho)= \dim \ker M(\rho)$ and 

\item $d\coker (\rho)= \dim \coker M(\rho).$ 
\end{enumerate}
For a $G_{2m}-$ representation $\rho=\{V_r, \alpha_i,\beta_i\}$ and $u\in \kappa \setminus 0$ denote by  $\rho_u=\{V'_r, \alpha'_i, \beta'_i\}$  the representation with $V'_r= V_r,$
$\alpha'_1= u \alpha_1,$ $\alpha'_i=\alpha_i$ for $i\ne1$ and $\beta'_i=\beta_i.$ 
 Clearly  $(\rho_1 \oplus \rho_2)_u= (\rho_1)_u\oplus (\rho_2)_u,$ $\dim(\rho)= \dim(\rho_u)$
and the block matrix   $M(\rho_u)$ is given by 
\begin{equation*}
\begin{pmatrix}

u\alpha_1& -\beta_1 &      0       &\dots  & \dots &0\\
         0    & \alpha_2 &-\beta_2&\dots   &  \dots &0\\
     \vdots&\vdots       &\vdots     &\vdots &\vdots & \\
      0        &\dots        &\dots       &\dots    &\dots \alpha_{m-1}&-\beta_{m-1}\\
      -\beta_m&\dots    &\dots       &\dots   & \dots &\alpha_m
         \end{pmatrix} _.
         \end{equation*}

One has:

\begin{proposition}$($\cite{BD11}$)$ \label {PR}
~
\begin{enumerate}
\item $\dim(\rho_1\oplus \rho_2)= \dim(\rho_1) + \dim(\rho_2)$,  
\item $d\ker (\rho_1\oplus \rho_2)= d\ker (\rho_1) + d\ker (\rho_2)$,
\item $d\coker(\rho_1\oplus \rho_2)= d\coker(\rho_1) + 
d\coker (\rho_2),$
\item 
$d\ker (\rho)=d\ker(\rho_u), \  d\coker(\rho)=d\coker(\rho_u)$. 
\end{enumerate}
\end{proposition}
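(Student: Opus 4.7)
The plan is to reduce all four claims to additivity together with a case analysis on indecomposable summands.

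Items (1), (2), (3) are immediate: under the canonical identifications of source and target spaces, $M(\rho_1\oplus\rho_2)$ is the block-diagonal direct sum $M(\rho_1)\oplus M(\rho_2)$. The first statement (additivity of $\dim$) is the pointwise identity $\dim(V^{(1)}_r\oplus V^{(2)}_r)=\dim V^{(1)}_r+\dim V^{(2)}_r$. The second and third statements follow from $\ker(A\oplus B)=\ker A\oplus\ker B$ and $\coker(A\oplus B)=\coker A\oplus\coker B$ for linear maps $A$ and $B$.

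For item (4), the identity $(\rho_1\oplus\rho_2)_u=(\rho_1)_u\oplus(\rho_2)_u$ recorded just above the proposition, combined with items (2) and (3), reduces the claim to the case where $\rho$ is indecomposable. By the classification reviewed in section~\ref{S2} there are then two families to treat: the type I bar codes $\rho^I(I)$ and the type II Jordan blocks $\rho^{II}(V,T)$. For a type I bar code whose supporting spiral does not cross the radius through $x_1$, one has $V_1=0$, hence $\alpha_1$ is the zero map and $\rho_u=\rho$, so there is nothing to prove. In the remaining cases I would write $M(\rho)$ and $M(\rho_u)$ explicitly in the bases of section~\ref{S2} and look for invertible diagonal changes of basis on $\bigoplus V_{2i-1}$ and $\bigoplus V_{2i}$ that relate $M(\rho)$ and $M(\rho_u)$ up to row and column scalings, which preserve both $d\ker$ and $d\coker$.

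The main obstacle is the cyclic structure of the graph $G_{2m}$. Rescaling the first source summand to absorb the factor $u$ from the $(1,1)$ entry creates a compensating factor in the $(m,1)$ entry $-\beta_m$, which must then be cancelled by further rescalings that propagate around the cycle via the entries $\alpha_m$, $-\beta_{m-1}$, $\alpha_{m-1}$, and so on. Carrying out this bookkeeping is the delicate step; for the type II blocks it ultimately reduces to a direct comparison, via the cyclic block matrix associated with the normal form (\ref{R1}), of the kernels and cokernels of the maps determined by $T$ and by $uT$.
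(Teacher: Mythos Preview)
Your strategy matches the paper's exactly: (i)--(iii) are immediate from block-diagonality of $M(\rho_1\oplus\rho_2)$, and (iv) is reduced to indecomposables via $(\rho_1\oplus\rho_2)_u=(\rho_1)_u\oplus(\rho_2)_u$ together with (ii) and (iii). The paper offers no more detail than the sentence ``Proposition~\ref{PR}~(iv) has to be verified first for indecomposable representations and then in view of Proposition~\ref{PR} the statements hold for an arbitrary representation.'' Your rescaling idea for type~I bar codes is correct and can be made precise: since the spiral has two endpoints, one may rescale the basis vector at each point of the spiral by $u^{n}$, where $n$ counts the $\alpha_1$-arrows traversed between the initial endpoint and that point; this is well defined precisely because the spiral is not closed, and it furnishes an isomorphism $(\rho^I(I))_u\cong\rho^I(I)$.

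The obstacle you flag for type~II, however, is not merely delicate bookkeeping --- it cannot be overcome, because item~(iv) as written is false for Jordan-block summands. Indeed $(\rho^{II}(V,T))_u\cong\rho^{II}(V,uT)$, and for this representation $d\ker=\dim\ker(T-\Id)$ whereas $d\ker$ of the $u$-twist is $\dim\ker(uT-\Id)$; these differ whenever $1$ is an eigenvalue of $T$ and $u\neq1$ (e.g.\ take $m=2$, $T=\Id$ on $\kappa$: then $M(\rho)$ has rank $1$ but $M(\rho_u)$ is invertible). This is consistent with Proposition~\ref{O38}, where $\dim\ker M(\rho_u)=\sharp\mathcal B^o(\rho)+\sharp\overline{\mathcal J}^{(u^{-1})}(\rho)$ visibly depends on $u$. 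The correct reading of~(iv) --- and the only one the paper actually uses --- is that the \emph{bar-code} contribution to $d\ker$ and $d\coker$ is independent of $u$; your rescaling argument proves exactly that, so your instinct to isolate the cyclic case was right.
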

 
 For  the indecomposable $\mathcal Z-$ representations one has: 

\begin{proposition}\label {AP01}
~
\begin {enumerate}
\item 
\begin{equation*}
\dim {\rho}([i,j])= \begin{cases}
n_l=1 , i+1\leq l\leq j, \ =0 \ \rm otherwise        \\
r_l=0  , i\leq l \leq j, \ =0 \ \rm otherwise
\end{cases}
\end{equation*}

\item
\begin{equation*}
\dim {\rho}((i,j))= \begin{cases}
n_l=1 , i+1\leq l\leq j, \ =0 \ \rm otherwise        \\
r_l=0  , i+1\leq l \leq j-1, \ =0 \ \rm otherwise
\end{cases}
\end{equation*}

\item
\begin{equation*}
\dim {\rho}([i,j))= \begin{cases}
n_l=1 , i+1\leq l\leq j, \ =0 \ \rm otherwise        \\
r_l=0  , i\leq l \leq j-1, \ =0 \ \rm otherwise
\end{cases}
\end{equation*}

\item
\begin{equation*}
\dim {\rho}((i,j])= \begin{cases}
n_l=1 , i+1\leq l\leq j, \ =0 \ \rm otherwise        \\
r_l=0  , i+1\leq l \leq j, \ =0 \ \rm otherwise
\end{cases}
\end{equation*}
\end{enumerate}
\end{proposition}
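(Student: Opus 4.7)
The plan is to prove Proposition~\ref{AP01} by direct verification from the explicit descriptions of the indecomposable $\mathcal Z$-representations given earlier in Section~\ref{S2}. Each of the four types $\rho([i,j])$, $\rho([i,j))$, $\rho((i,j])$, $\rho((i,j))$ has all vector spaces $V_r$ equal to $\kappa$ or $0$, and the list of indices $r$ with $V_r=\kappa$ was written down explicitly. Since $\dim \rho$ records only these dimensions via $n_l = \dim V_{2l-1}$ and $r_l = \dim V_{2l}$, the linear maps $\alpha_i,\beta_i$ play no role and the proof reduces to translating the index set from the variable $r$ to the variable $l$.

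Concretely, for $\rho([i,j])$ the support is $\{r\in\mathbb Z : 2i \leq r \leq 2j\}$. Solving $2i \leq 2l \leq 2j$ in integers gives $i \leq l \leq j$, yielding the range of $l$ with $r_l\neq 0$. Solving $2i \leq 2l-1 \leq 2j$ gives $i+1 \leq l \leq j$, yielding the range with $n_l\neq 0$. For the other three types, each open endpoint simply removes one element from the support: dropping $r=2i$ at the left end shifts the lower bound for $r_l$ from $l\geq i$ to $l\geq i+1$ without affecting the range of $n_l$, while dropping $r=2j$ at the right end shifts the upper bound for $r_l$ from $l\leq j$ to $l\leq j-1$ without affecting the range of $n_l$. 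Running through the four combinations yields exactly the four cases in the statement.

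The argument is pure bookkeeping, so the main (in fact only) subtlety is the parity mismatch: even indices $2l$ feed into $r_l$, odd indices $2l-1$ feed into $n_l$, so one must carefully rewrite integer inequalities of the form $2i \leq 2l-1 \leq 2j$ as $i+1 \leq l \leq j$ when passing from $r$ to $l$. There is no genuine obstacle, and no further input (beyond the definitions and the explicit descriptions recalled above) is needed.
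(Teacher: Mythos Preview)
Your proposal is correct and is exactly what the paper intends: the paper does not spell out an argument for Proposition~\ref{AP01} but simply remarks that its proof (together with those of Propositions~\ref{PR}(i)--(iii) and~\ref{AP1}) is ``straightforward,'' i.e.\ a direct reading of the explicit descriptions of the indecomposables $\rho(I)$ given just before. Your translation of the support $\{r : V_r=\kappa\}$ into the ranges for $n_l=\dim V_{2l-1}$ and $r_l=\dim V_{2l}$ via the parity bookkeeping is precisely that verification.
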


\begin{proposition}\label{AP02}
~
\begin{enumerate} 
\item $d\ker  \rho ([i,j])=0$, $d\coker \rho ([i,j])=1$,
\item $d\ker \rho ([i,j))=0$, $d\coker \rho ([i,j))=0$,
\item $d\ker  \rho ((i,j])=0$, $d\coker \rho ((i,j])=0$,
\item $d\ker  \rho ((i,j))=1$, $d\coker \rho ((i,j))=0$.

\end{enumerate}
\end{proposition}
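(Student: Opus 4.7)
My plan is to handle the four cases uniformly by writing down the matrix $M(\rho(I))$ explicitly for each indecomposable bar code, using Proposition~\ref{AP01}. In every case the underlying vector spaces are one-dimensional on a contiguous range of indices, and the maps $\alpha_l,\beta_{l-1}$ are the identity whenever both source and target are nonzero and zero otherwise. Consequently $M(\rho(I))$ is a $\{0,1\}$-matrix (signs being irrelevant for kernel/cokernel dimensions) with at most two nonzero entries per column: an entry from $\alpha_l$ in the row labelled by $V_{2l}$ and an entry from $\beta_{l-1}$ in the row labelled by $V_{2l-2}$.

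For the closed bar code $\rho([i,j])$ with $i\le j$, the columns are indexed by $V_{2l-1}$ with $l=i+1,\dots,j$ and the rows by $V_{2l}$ with $l=i,\dots,j$; every column $l$ receives both an $\alpha_l$ and a $\beta_{l-1}$ contribution, so $M$ is a $(j-i+1)\times(j-i)$ matrix that (after reordering) is lower bidiagonal with $1$'s on and immediately above the diagonal. It has full column rank, so $d\ker=0$ and $d\coker=1$. The degenerate case $i=j$ gives the zero map $0\to\kappa$, still with $d\ker=0$, $d\coker=1$. For $\rho([i,j))$ with $i<j$, the vertex $V_{2j}$ is dropped; this deletes one row and forces the last column ($l=j$) to contribute only via $\beta_{j-1}$. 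The resulting $(j-i)\times(j-i)$ matrix is, after reordering, lower triangular with $1$'s on the diagonal, hence invertible: $d\ker=d\coker=0$. Symmetrically, for $\rho((i,j])$ the vertex $V_{2i}$ is dropped, the first column ($l=i+1$) contributes only via $\alpha_{i+1}$, and we obtain an upper triangular $(j-i)\times(j-i)$ matrix with $1$'s on the diagonal, so again $d\ker=d\coker=0$.

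For the open bar code $\rho((i,j))$ with $i<j$, both $V_{2i}$ and $V_{2j}$ are removed. This leaves a $(j-i-1)\times(j-i)$ matrix in which the first column has a single $1$ (from $\alpha_{i+1}$) and the last column has a single $1$ (from $\beta_{j-1}$), with the interior columns still bidiagonal. Its rank is the maximal value $j-i-1$, so $d\coker=0$ and $d\ker=1$. The degenerate case $j=i+1$ corresponds to the $0\times 1$ zero matrix $\kappa\to 0$, still with $d\ker=1$ and $d\coker=0$, so this case is consistent.

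The only point requiring care is bookkeeping: keeping track of which rows or columns disappear when an endpoint of $I$ is open rather than closed. Once the correct $0,1$-matrix is written down, each rank computation reduces to a bidiagonal or triangular inspection, with no substantive obstacle. The four outcomes neatly correspond to whether one, both, or none of the ``boundary contributions'' $\alpha_j$ and $\beta_i$ are suppressed by the open endpoints of $I$.
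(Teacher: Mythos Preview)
Your proof is correct and follows essentially the same idea as the paper: a direct inspection of the matrix $M(\rho(I))$ for each of the four bar code types, using the explicit description of the one-dimensional vector spaces and identity/zero maps. The paper itself merely remarks that Proposition~\ref{AP02} is a particular case of Proposition~\ref{AP2} (the $G_{2m}$ version), whose proof in turn boils down to the same explicit kernel computation you carry out; your direct treatment for $\mathcal Z$-representations is arguably cleaner than routing through the cyclic case.
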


For indecomposable $G_{2m}-$representations one has:

\begin{proposition}   $($\cite{BD11}$)$\label {AP1} 
~
\begin{enumerate}

\item If $i\leq j$ then 
\begin{enumerate}
\item $\dim \rho^I([i,j]; k)$ is given by: 

$n_l=k+1$ if  $(i+1)\leq l\leq j$ and $k$ otherwise, 

$ r_l= k+1$ if $i\leq l\leq j$ and $k$ otherwise 
\item $\dim \rho^I((i,j]; k)$ is given by: 

$n_l=k+1$ if  $(i+1)\leq l\leq j$ and  $k$ otherwise, 

$r_l= k+1$ if $(i +1)\leq l\leq j$ and $k$ otherwise, 
\item $\dim \rho^I([i,j); k)$ is given by: 

$n_l=k+1$ if  

$(i+1)\leq l\leq j$ and $k$ otherwise,  

$ r_l= k+1$   if  $i\leq l\leq (j-1)$ and $k$ otherwise,   
\item $\dim \rho^I((i,j); k)$ is given by:

 $n_l=k+1$ if  
$(i+1)\leq l\leq j$ and $k$ otherwise,  

$r_l= k+1$  if  $(i +1)\leq l\leq( j-1)$ and $k$ otherwise

\end{enumerate}
\item If $i> j$ then similar statements hold.
\begin{enumerate}
\item $\dim \rho^I([i,j]; k)$ is given by:

 $n_l=k$ if  $(j+1)\leq l\leq i$ and $k+1$ otherwise; 
 
 $r_l=k$ if   $(j+1)\leq l\leq (i-1)j$ and  $k+1$ otherwise
\item $\dim \rho^I((i,j]; k)$ is given by:

 $n_l=k$ if  $(j+1)\leq l\leq i$ and  
 $k+1 $ otherwise.  
 
 $r_l=k$ if  
 $(j+1)\leq l\leq i$ and $k +1$ otherwise,
\item $\dim \rho^I([i,j); k)$ is given by:

 $n_l=k$ if  
$(j+1)\leq l\leq i$ and  $k+1$ otherwise;

 $r_l=k$ if  
$ j\leq l \leq (i-1)$  and $k+1 $ otherwise,
\item $\dim \rho^I((i,j); k)$ is given by:

 $n_l=k$  if $(j+1)\leq l\leq i$  and $k+1$ otherwise; 
 
 $r_l=k$ if  
$j\leq l\leq i$  and $k+1$ otherwise.  
\end{enumerate}
\end{enumerate}
\end{proposition}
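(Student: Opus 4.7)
The plan is to read off both $n_l$ and $r_l$ directly from the geometric description of $\rho^I(\{i,j+mk\})$ given just before the proposition. Recall that the vertices of $G_{2m}$ are placed on the unit circle so that $x_{2l}$ sits on the radius at angle $s_l$ and $x_{2l-1}$ sits on the radius at an angle strictly between $s_{l-1}$ and $s_l$ (cyclically). By construction $V_{2l}$ has a basis indexed by the intersection points of the drawn spiral with the radius through $x_{2l}$ and $V_{2l-1}$ has a basis indexed by the intersections with the radius through $x_{2l-1}$. Consequently $r_l$ counts the number of arcs of the spiral crossing the angle $s_l$, and $n_l$ counts the number of arcs crossing any angle in the open interval $(s_{l-1},s_l)$.

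For case (1), where $i \leq j$, I would first consider the closed--closed interval $[i,j+mk]$: the spiral makes $k$ full turns, contributing $k$ crossings of every radius, plus an additional partial turn from angle $s_i$ to $s_j+2\pi k$. This partial turn crosses the radius at $s_l$ exactly when $s_i \leq s_l \leq s_j+2\pi k$ modulo the $k$ wrapping (equivalently $i\leq l \leq j$), giving $r_l=k+1$ there and $r_l=k$ otherwise; similarly it crosses each intermediate radius of $x_{2l-1}$ iff $(s_{l-1},s_l)\subset [s_i,s_j]$ modulo wrapping, i.e.\ iff $i+1\leq l\leq j$. Then the three remaining interval types are handled by the same count, adjusted by removing the endpoint intersection when an endpoint is open: opening the left end at $s_i$ removes one crossing from $r_i$ (hence the range becomes $i+1\leq l\leq j$ for $r_l=k+1$), while opening the right end at $s_j+2\pi k$ removes one crossing from $r_j$ (hence $i\leq l\leq j-1$). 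The $n_l$ count is not affected by these endpoint adjustments because $x_{2l-1}$ sits strictly between two consecutive $s_l$ angles, so this reproduces all four formulas in part~(1) exactly.

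For case (2), where $i>j$, the same bookkeeping applies, but now the partial turn from $s_i$ to $s_j+2\pi k$ actually passes through all angles in $(s_i,s_m]\cup[s_1,s_j+2\pi k)\pmod{2\pi}$, i.e.\ over the complementary circular arc compared to case (1). Thus the roles of the ``$k$'' and ``$k+1$'' regions swap: $r_l=k+1$ on $l\in\{j+1,\dots,i-1\}^c$ (with endpoint adjustments as before), and similarly for $n_l$. Translating this back, the formulas stated in (2)(a)--(d) drop out.

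The computation is elementary once the picture is fixed, so the main obstacle is bookkeeping the four endpoint-type cases simultaneously with the two global cases $i\le j$ and $i>j$; in particular one must verify that the open/closed adjustments act only on the relevant $r_l$ at $l=i$ or $l=j$ and never on any $n_l$, which is forced by the location of the vertices $x_{2l-1}$ strictly between consecutive $s_l$'s.
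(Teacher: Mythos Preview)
Your approach is exactly what the paper intends: it offers no proof beyond calling Propositions~\ref{PR}(i--iii), \ref{AP01}, and \ref{AP1} ``straightforward,'' and you fill this in by counting intersections of the defining spiral with the vertex radii, which is precisely how the Type~I representations are constructed. Your case~(1) count is carried out carefully and is correct; your case~(2) is more of a sketch (``the formulas drop out''), and if you redo the crossing count there you will find the spiral from $s_i$ to $s_j+2\pi k$ with $i>j$ makes only $k-1$ full turns rather than $k$, so it is worth verifying that your endpoint bookkeeping really reproduces the stated values rather than just asserting the swap.
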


\begin{proposition}  $($\cite{BD11}$)$  \label{AP2}
~
\begin{enumerate} 
\item $d\ker \rho^I([i,j]; k)=0$, $d\coker \rho^I([i,j]; k)=1$,
\item $d\ker \rho^I([i,j); k)=0$, $d\coker \rho ^I([i,j); k)=0$,
\item $d\ker \rho^I ((i,j]; k)=0$, $d\coker \rho^I((i,j]; k)=0$,
\item $d\ker \rho^I((i,j); k)=1$, $d\coker \rho^I((i,j); k)=0$,
\item $d\ker \rho^{II}(\lambda,k) =0$ $(\rm{resp.} =1)$ 
if $\lambda {\ne 1}$ $(\rm{resp.} =1),$
\item $d\coker \rho^{II}(\lambda,k)=0$ $(\rm{resp.} =1)$ 
if $\lambda {\ne 1}$ $(\rm{resp.} =1).$
\end{enumerate}
\end{proposition}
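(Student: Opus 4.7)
The proof splits by representation type: cases (i)--(iv) concern the Type~I (bar code) representations $\rho^I(I;k)$, and cases (v)--(vi) concern the Type~II representations $\rho^{II}(\lambda,k)$. Throughout, the identity $d\ker(\rho)-d\coker(\rho)=\sum_l n_l-\sum_l r_l$, immediate from $M(\rho)$ being a linear map between vector spaces of those dimensions, reduces each case to showing the vanishing of exactly one of $d\ker$ or $d\coker$.

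For the Type~II representations $\rho^{II}(\lambda,k)=\rho(\kappa^k,T(\lambda,k))$ the representation is already in the normalized form of \eqref{R1}: $V_r=\kappa^k$, $\alpha_1=T(\lambda,k)$, and every remaining $\alpha_i,\beta_i$ is the identity. The block matrix $M$ then encodes the equations $T(\lambda,k)v_1=v_2$, $v_i=v_{i+1}$ for $2\leq i\leq m-1$, and $v_m=v_1$, so a kernel element is uniquely determined by $v_1$ and must satisfy $(T(\lambda,k)-I)v_1=0$. Since $T(\lambda,k)-I=T(\lambda-1,k)$ is invertible for $\lambda\neq 1$ and has one-dimensional kernel for $\lambda=1$, the stated values of $d\ker$ follow. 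The source and target of $M$ both have dimension $mk$, so the Euler identity yields $d\coker=d\ker$, completing (v), (vi).

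For the Type~I representations, substituting the values from Proposition~\ref{AP1} into the Euler identity gives $d\ker-d\coker$ equal to $-1$, $0$, $0$, $+1$ in cases (i)--(iv) respectively; it therefore remains to verify vanishing of the appropriate one of $d\ker$ or $d\coker$ in each case. My plan is to read $M(\rho^I(I;k))$ off the spiral picture: the columns are indexed by intersections of the spiral with the odd-indexed radii, and each column carries a $+1$ from $\alpha_l$ together with a $-1$ from $-\beta_l$ placed in the rows corresponding to the two neighbouring even-indexed intersections; the exception is at an open endpoint of the spiral, where the missing spiral segment produces a column with a single nonzero entry. Starting from a closed end, the unique basis vector of the target there appears as a lone $\pm 1$ in a column of shape $(+1,-1)$, which can be used to eliminate the other entry of that column and to advance one step further along the spiral. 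Iterating, every even-vertex basis vector encountered lies in the image of $M$ except possibly the one at the far end, which remains uncovered precisely when that end is closed; this gives $d\coker=1$ in case (i) and $d\coker=0$ in cases (ii), (iii). The symmetric walk beginning at an open endpoint in the source shows $d\ker=0$ in cases (ii), (iii) and $d\ker=1$ in case (iv). The main technical obstacle is the careful bookkeeping of signs across the cyclic identification $V_{2m+1}=V_1$ as the spiral wraps $k$ times around $G_{2m}$; once this is organized, the spiral-walking argument yields each case directly.
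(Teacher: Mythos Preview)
Your argument is correct and follows essentially the same approach as the paper: both reduce to computing $\ker M(\rho)$ by solving the linear system $\alpha_l(v_{2l-1})=\beta_l(v_{2l+1})$ using the explicit description of the indecomposable representations, and then invoke the dimension formulas of Proposition~\ref{AP1} (your Euler identity) to recover $d\coker$. Your spiral-walking description of the Type~I matrix is just a more explicit rendering of what the paper means by ``the linear system \ldots\ which were explicitly described above''; the sign bookkeeping you flag as a technical obstacle is in fact harmless, since changing the sign of a column of $M$ does not affect $\ker$ or $\coker$.
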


The proof of Propositions \ref {PR} (1,2,3), \ref{AP01}, \ref{AP1}  are straightforward. Items (i- vi)  in Proposition \ref{AP2}  
 follow from the calculation of the kernel of $M(\rho)$ and from Proposition \ref{AP1} while Proposition \ref{AP02} can be viewed as a particular case of  Proposition  \ref{AP2}.
Proposition \ref{PR} (iv) has to be verified first for indecomposable  representations and then in view of  Proposition \ref{PR} the statements  hold for an arbitrary representation. 

The calculation of kernel of $M(\rho)$ for $\rho$ of Type I or II boils down to the description of the space of solutions of the linear system
\begin{equation*}
\begin{aligned}
\alpha_1(v_1)=&\beta_1(v_3)\\
\alpha_2(v_3)=&\beta_2(v_5)\\
&\cdots\\
\alpha_m(v_{2m-1})=&\beta_m(v_1)
\end{aligned}
\end{equation*}
which were explicitly described above. 

Proposition \ref{AP02} and \ref{AP2} can be refined.

For this purpose let us choose once for all for any open resp. closed interval $I$ 
an isomorphism between $\ker \rho(I) $ resp. $\coker \rho(I)$  and $\kappa$ and for any Jordan cell  $(1, k)$ an isomorphism between  $\ker\rho^{II}(1, k)$ resp. $\coker \rho^{II}(1,k)$ and  $\kappa.$

For a set $S$ let $\kappa[S]$ denote the vector space generated by $ S.$
Recall that for a representation $\rho$ we have denoted by $\mathcal B^c(\rho)$ the collection of closed bar codes and by $\mathcal B^o(\rho)$ the collection of 
open bar codes.  
The following propositions follows immediately   from Propositions \ref {PR}, \ref{AP02} and \ref{AP2}. 

\begin{proposition}\label{O37}
If for  a $\mathcal Z-$ representation with finite support  $\rho$  a decomposition of $ \rho= \sum_{I\in \mathcal B( \rho)}\rho(I)$ is given,  then Proposition \ref{AP01} provides 
 canonical isomorphisms $$\Psi^c:\kappa [\mathcal B^c(\rho)]\to \coker M( \rho)$$ and $$\Psi^o:\kappa [\mathcal B^o( \rho)]\to \ker M(\rho).$$
\end{proposition}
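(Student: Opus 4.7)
The plan is to chain three elementary facts. First, the matrix $M(-)$ turns direct sums of representations into block-diagonal matrices, so the given decomposition $\rho=\bigoplus_{I\in\mathcal B(\rho)}\rho(I)$ induces canonical identifications
$$\ker M(\rho)=\bigoplus_{I\in\mathcal B(\rho)}\ker M(\rho(I)),\qquad \coker M(\rho)=\bigoplus_{I\in\mathcal B(\rho)}\coker M(\rho(I));$$
this is just Proposition~\ref{PR}(ii,iii) read at the level of spaces rather than dimensions.

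Second, Proposition~\ref{AP02} tells us precisely which summands survive: among the four interval types, $\ker M(\rho(I))$ is nonzero (and then one-dimensional) exactly when $I$ is an open bar code, while $\coker M(\rho(I))$ is nonzero (and then one-dimensional) exactly when $I$ is a closed bar code. Hence the two sums above collapse to
$$\ker M(\rho)=\bigoplus_{I\in\mathcal B^o(\rho)}\ker M(\rho(I)),\qquad \coker M(\rho)=\bigoplus_{I\in\mathcal B^c(\rho)}\coker M(\rho(I)).$$

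Third, using the identifications $\ker\rho(I)\cong\kappa$ for each open $I$ and $\coker\rho(I)\cong\kappa$ for each closed $I$, fixed once and for all just before the statement, one defines
$$\Psi^o\colon\kappa[\mathcal B^o(\rho)]=\bigoplus_{I\in\mathcal B^o(\rho)}\kappa\longrightarrow\ker M(\rho),\qquad \Psi^c\colon\kappa[\mathcal B^c(\rho)]=\bigoplus_{I\in\mathcal B^c(\rho)}\kappa\longrightarrow\coker M(\rho)$$
as the direct sum of the chosen scalar identifications summand by summand; each is then an isomorphism because it is an isomorphism on every summand.

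There is no real obstacle here: the result is a bookkeeping consequence of the additivity of $d\ker$ and $d\coker$ (Proposition~\ref{PR}) together with the indecomposable-by-indecomposable computation (Proposition~\ref{AP02}). The word \emph{canonical} in the statement should be read as canonical given the two explicit choices in the hypothesis, namely the fixed decomposition into bar codes and the fixed scalar identifications of each indecomposable's kernel or cokernel with $\kappa$.
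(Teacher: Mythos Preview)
Your proof is correct and matches the paper's own approach: the paper simply states that the proposition ``follows immediately from Propositions~\ref{PR}, \ref{AP02} and \ref{AP2}'', and your three-step argument is precisely the unpacking of that sentence. (Note that the reference to Proposition~\ref{AP01} in the statement is a typo in the paper; the relevant computation of $d\ker$ and $d\coker$ for the indecomposable $\mathcal Z$-representations is Proposition~\ref{AP02}, which is indeed what you invoke.)
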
  
\vskip .1in

Let us write   $\overline{\mathcal J}^\lambda$ for the collection of  Jordan cells cells whose eigenvalue is exactly $\lambda.$
We have: 

\begin{proposition} \label{O38}
If $\rho$ is a $G_{2m}$ representation 
Proposition \ref{AP2}  provides the canonical isomorphisms 
\begin{equation*}
\begin{aligned}
\Psi^c:\kappa &[\mathcal B^c(\rho)\sqcup \overline{\mathcal J}^1(\rho)]\to \coker M(\rho)\\
\Psi^o:\kappa &[\mathcal B^o(\rho)\sqcup \overline{\mathcal J}^1(\rho)]\to \ker M(\rho).
\end{aligned}
\end{equation*} 
More general for any $u\in \kappa\setminus 0$ it provides the canonical isomorphisms 
\begin{equation*}
\begin{aligned}
 \Psi^c:\kappa &[\mathcal B^c(\rho)\sqcup \overline{\mathcal J}^{(u^{-1})}(\rho)]\to \coker M(\rho_u))\\
 \Psi^o:\kappa &[\mathcal B^o(\rho)\sqcup \overline{\mathcal J}^{(u^{-1})}(\rho)]\to \ker M(\rho_u).
\end{aligned}
\end{equation*} 
\end{proposition}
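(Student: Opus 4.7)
The plan is to deduce Proposition \ref{O38} by combining the Remak--Schmidt decomposition of a $G_{2m}$-representation with the additivity stated in Proposition \ref{PR} and the per-indecomposable computations of Proposition \ref{AP2}. Starting from the decomposition \eqref{E4}, I would observe that $M$ is additive: $M(\rho_1\oplus\rho_2)$ is the block diagonal map $M(\rho_1)\oplus M(\rho_2)$, so that
\[
\ker M(\rho)=\bigoplus_{I\in\mathcal B(\rho)}\ker M(\rho^I(I))\oplus\bigoplus_{J\in\mathcal J(\rho)}\ker M(\rho^{II}(J))
\]
and similarly for $\coker M(\rho)$; this is exactly items (ii) and (iii) of Proposition \ref{PR}.

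Next, invoke Proposition \ref{AP2}. For bar codes, $d\coker M(\rho^I(I))=1$ iff $I$ is closed, and $d\ker M(\rho^I(I))=1$ iff $I$ is open (mixed bar codes contribute zero to both). For Jordan cells, $d\ker M(\rho^{II}(\lambda,k))=d\coker M(\rho^{II}(\lambda,k))=1$ iff $\lambda=1$. Using the isomorphisms $\ker\rho(I)\cong\kappa$, $\coker\rho(I)\cong\kappa$, $\ker\rho^{II}(1,k)\cong\kappa$, $\coker\rho^{II}(1,k)\cong\kappa$ fixed once and for all on each isomorphism class of indecomposable, one obtains canonical embeddings of each summand of $\kappa[\mathcal B^c(\rho)\sqcup\overline{\mathcal J}^1(\rho)]$ into $\coker M(\rho)$ (and similarly for the open/kernel statement). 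These assemble into $\Psi^c$ and $\Psi^o$; dimension counting shows they are isomorphisms.

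For the twisted statement, I need to determine $\rho_u$ on each type of indecomposable summand. Since twisting commutes with direct sums, it suffices to analyze the indecomposables. For a bar-code summand $\rho^I(I)$, all maps are either the identity or zero, and multiplying the identity by $u\in\kappa\setminus 0$ still gives an isomorphism of $\kappa$, so $\rho^I(I)_u\cong\rho^I(I)$ (equivalently, apply Proposition \ref{PR}(iv)). For a Jordan cell, the formula for the monodromy of a regular representation shows that twisting $\alpha_1$ by $u$ multiplies the monodromy by $u$, hence $\rho^{II}(\lambda,k)_u\cong\rho^{II}(u\lambda,k)$. Applying the untwisted statement to $\rho_u$ and using that $u\lambda=1$ iff $\lambda=u^{-1}$ yields the general isomorphisms.

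The only genuine issue to address is the meaning of ``canonical'': the Remak--Schmidt decomposition is unique only up to permutation of summands and isomorphism within each summand, so I would emphasize that after fixing the identifications $\ker\rho(I)\cong\kappa$, etc., on isomorphism classes, the induced maps $\Psi^c$ and $\Psi^o$ depend only on the multiplicity spaces of each indecomposable type appearing in $\rho$, not on a particular splitting. This is a routine verification because any two Remak--Schmidt decompositions differ by an automorphism that acts trivially on the indexing sets $\mathcal B^c(\rho)$, $\mathcal B^o(\rho)$ and $\overline{\mathcal J}^{u^{-1}}(\rho)$.
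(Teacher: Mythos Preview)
Your proposal is correct and follows exactly the approach the paper intends: the paper states only that the proposition ``follows immediately from Propositions~\ref{PR}, \ref{AP02} and \ref{AP2}'', and you have simply unpacked what that means---additivity of $M$ under direct sum, the Remak--Schmidt decomposition~\eqref{E4}, and the per-indecomposable kernel/cokernel dimensions. Your treatment of the twisted case (observing $\rho^I(I)_u\cong\rho^I(I)$ and $\rho^{II}(\lambda,k)_u\cong\rho^{II}(u\lambda,k)$) and your remark on what ``canonical'' means here are more explicit than the paper itself, but entirely in the same spirit.
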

\vskip .1in

For  $\rho=\{V_r, \alpha_i, \beta_i\}$ a $G_{2m}-$ representation, consider   the $\mathcal Z-$ representation $\tilde \rho:= \{V'_{2mk+r}= V_r, \alpha'_{mk+i}= \alpha_i, \beta'_{mk+i}=\beta_i\}$ and  
denote by:   

$\tilde{\mathcal B}(\rho):= \{I+2\pi k \mid k\in \mathbb Z, I\in \mathcal B \},$ \

$\tilde{\mathcal B}^c(\rho):= \{I+2\pi k \mid k\in \mathbb Z, I\in \mathcal B^c \},$ 

$\tilde{\mathcal B}^o(\rho):= \{I+2\pi k \mid k\in \mathbb Z, I\in \mathcal B^o\}.$  

Let $\tilde {\mathcal J}(\rho)$ be  the set which contains  $\dim(V)$ copies of $J$ for any Jordan block $J=(V,T)\in \mathcal J(\rho) ,$ equivalently $k$ copies of each Jordan cell $(\lambda, k)\in \overline {\mathcal J}(\rho).$ 


 In section \ref{S5} we will need the following observation. 
\begin{obs}\label{O39}
 \begin{equation}\label{E}
\begin{aligned}
\mathcal B(T_{i,j} (\tilde\rho))= &\{  I \in \tilde{\mathcal B_r}(\rho)   \mid    I \cap [2i,2j]\ne \emptyset  \} \sqcup \tilde {\mathcal J}(\rho)
\\
\mathcal B^c(T_{i,j} (\tilde\rho))=& \{ I \in \tilde{\mathcal B}^c_r(\rho) \mid I \cap [2i,2j]  \rm  { a \ closed\  nonempty\ interval} \}\sqcup \tilde {\mathcal J}(\rho)\\
\mathcal B^o(T_{i,j}(\tilde\rho))=& \{ I \in \tilde{\mathcal B}^o_r( \rho),   I\subset (2i,2j)).
\end{aligned}
\end{equation} 
\end{obs}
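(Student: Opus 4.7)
The strategy is a two-step application of the Krull--Remak--Schmidt theorem. Using (\ref{E4}) I would first decompose $\rho = \bigoplus_{I \in \mathcal B(\rho)} \rho^I(I) \oplus \bigoplus_{J \in \mathcal J(\rho)} \rho^{II}(J)$. Both the lift $\rho \rightsquigarrow \tilde\rho$ and the truncation $T_{i,j}$ are additive, so $T_{i,j}(\tilde\rho) = \bigoplus_I T_{i,j}(\widetilde{\rho^I(I)}) \oplus \bigoplus_J T_{i,j}(\widetilde{\rho^{II}(J)})$, and since $T_{i,j}(\tilde\rho)$ has finite support, the uniqueness part of Krull--Remak--Schmidt for $\mathcal Z$-representations with finite support (\ref{E3}) reduces the problem to computing each summand separately and then reading off its bar code decomposition.

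Next I would identify the two types of lifted summands. For a bar code $I \in \mathcal B(\rho)$, the lift $\widetilde{\rho^I(I)}$ decomposes in the $\mathcal Z$-category as $\bigoplus_{k \in \mathbb Z} \rho(I+2\pi k)$, and these translated bar codes exhaust the contribution of $I$ to $\tilde{\mathcal B}(\rho)$. For a Jordan cell $(\lambda,k) \in \overline{\mathcal J}(\rho)$, the lift $\widetilde{\rho^{II}(\lambda,k)}$ is the $\mathcal Z$-representation with $\kappa^k$ at every vertex, with $T(\lambda,k)$ placed on every $(mn{+}1)$-st $\alpha$-arrow and identity elsewhere. A direct inspection of endomorphisms (they must commute with all the identity arrows and with $T(\lambda,k)$, hence form the centraliser of a Jordan block, a local ring) shows that this is a single indecomposable $\mathcal Z$-representation of infinite support.

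The truncation calculation is then a routine check. For a bar code $\rho(I)$ with $I\cap[2i,2j]=\emptyset$ the truncation is $0$. For a bar code $\rho(I)$ meeting $[2i,2j]$, $T_{i,j}(\rho(I))$ is again a bar code $\rho(I\cap[2i,2j])$: an endpoint of $I$ lying interior to $[2i,2j]$ retains its open/closed type, while an endpoint of $I$ lying outside the window becomes a closed endpoint at $2i$ or $2j$, because $T_{i,j}$ retains $V_{2i}$ and $V_{2j}$ but forces $V_{2i-1}=V_{2j+1}=0$. For the lift of a Jordan cell, $T_{i,j}(\widetilde{\rho^{II}(\lambda,k)})$ is a regular $\mathcal Z$-representation on $[2i,2j]$ of constant dimension $k$ in which the interior arrows are either identities or the invertible matrix $T(\lambda,k)$; a change of basis at each vertex converts all of them to identities, so the truncation is isomorphic to $k$ copies of the closed bar code $\rho([i,j])$, which accounts for the $k$ copies of $(\lambda,k)$ in $\tilde{\mathcal J}(\rho)$.

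Reading off the three equalities of (\ref{E}) is now immediate. Line~1 counts all non-zero summands, hence the intervals of $\tilde{\mathcal B}(\rho)$ meeting $[2i,2j]$ together with the $\tilde{\mathcal J}(\rho)$ closed contributions. Line~2 restricts to closed summands: by the endpoint rule this happens precisely when $I \in \tilde{\mathcal B}^c(\rho)$ and $I \cap [2i,2j]$ is a closed nonempty interval, plus the Jordan contributions. Line~3 restricts to open summands, which by the same rule arise only when $I \in \tilde{\mathcal B}^o(\rho)$ has both endpoints strictly inside the truncation window, i.e.\ $I \subset (2i,2j)$, with no Jordan contribution. The one non-routine point is the splitting claim for the truncated Jordan lift, which rests on the invertibility of $T(\lambda,k)$ to carry out the vertex-wise base change.
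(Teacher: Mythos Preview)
Your approach is correct and matches the paper's, which says only that the statement ``can be easily verified for representations of Type I and II and then follows for arbitrary representations''; you have supplied exactly those details via additivity of the lift and truncation together with Krull--Remak--Schmidt. The aside on indecomposability of the lifted Jordan representation is unnecessary---only its truncation matters for the argument---but it is harmless.
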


The above statement can be easily verified for representations of Type I and II  and then follows for arbitrary representations.


 
 %
\section {Appendix to Graph Representations }

{\bf The Elementary transformations.}\label{ET}

\vskip .1in

We discuss here only $G_{2m}$ representations since $\mathcal Z-$ representations with finite support can be viewed  as particular cases.
We convene that for $i>2m$ $V_i= V_{i-2m},$ $\alpha_i= \alpha_{i-2m}$ and $\beta_i=\beta_{i-2m}.$

Each transformation takes an index $i$ and a representation 
\[\rho=\{V_j| 1\le j\le 2m , \, \alpha_s: V_{2s-1}\to V_{2s}, \, 
\beta_s: V_{2s+1}\to V_{2s}| 1\le s\le m\}\] 
and produces a new representation 
$$\rho'=\{V'_j| 1\le j \le 2m ,\,  \alpha'_s: V'_{2s-1}\to V'_{2s}, \, \beta'_s: V'_{2s+1}\to V'_{2s}| 1\le s\le m\}$$ 
as follows:
\begin{enumerate}
\item 
If $\rho'= T_1(i)\rho$  then  $V'_{2i-1}= V_{2i-1}/ \ker (\beta_{i-1})$,  $V'_{2i}=V_{2i}/\alpha_{i}(\ker(\beta_{i-1})),$  $V'_{j}= V_j$  for $j\ne \{2i-1, 2i\}$ with $\alpha'_s, \beta'_s$ being induced from $\alpha_s, \beta_s$ for
$s\in [1,m]$.
\vskip .1in
\item 
If  $\rho'= T_2(i)\rho$ then  $V'_{2i+1}= V_{2i+1}/ \ker (\alpha_{i+1})$,  $V'_{2i}=V_{2i}/\beta_{i}(\ker\alpha_{i+1}),$  $V'_{j}= V_j$  for $j\ne \{2i+1, 2i\}$ with $\alpha'_s, \beta'_s$ being induced from $\alpha_s, \beta_s$ for 
$s\in[1,m]$.
\vskip .1in
\item 
If $\rho'= T_3(i)\rho$  then $V'_{2i}= \alpha_i(V_{2i-1})$, $V'_{2i+1}=\beta_i^{-1}(\alpha_i(V_{2i-1})),$  $V'_{j}= V_j$ for $j\ne \{2i, 2i+1\}$ with $\alpha'_s, \beta'_s$ being the restrictions  of $\alpha_s, \beta_s$ for $s\in[1,m]$.
\vskip .1in
\item 
If $\rho'= T_4(i)\rho$ then $V'_{2i}= \beta_i(V_{2i+1})$, $V'_{2i-1}=\alpha_i^{-1}(\beta_i(V_{2i+1}))$,  $V'_{j}= V_j$ for $j\ne \{2i, 2i-1\}$ with $\alpha'_s, \beta'_s$ being the restrictions  of $\alpha_s, \beta_s$ for $s\in [1,m]$.
\end{enumerate}

The following diagrams \footnote{in these diagrams $V_0= V_{2m} $ 
and $\beta_0= \beta_m$} indicate the constructions described above. 
The indices increase
from right to left to signify that the vector spaces are laid counterclockwise
with increasing indices around a quiver.\\

Transformation $T_1(i)\rho$:

\xymatrix{
&\cdots&V_{2i+1}\ar[l]_{\alpha_{i+1}}\ar [dr]_{\beta'_i}\ar[r]^{\beta_i}&V_{2i}\ar[d]              &V_{2i-1}\ar[l]_{\alpha_{i}}\ar[d]\ar[r]^{\beta_{i-1}}           &V_{2i-2}      &\cdots\ar[l]&\\
&           &                                                                                      &V'_{2i}                      &V'_{2i-1}\ar[l]^{\alpha'_i} \ar[ur]_{\beta'_{i-1}}        &                     &          &}

\vskip .2in 

\hskip 1in $V'_{2i-1}= V_{2i-1}/  \ker(\beta_{i-1}) \quad  V'_{2i}= V_{2i}/ \alpha_{i}(\ker(\beta_{i-1}))$

\vskip .2in 

Transformation $T_2(i)\rho$:

\xymatrix{
&\cdots\ar[r]^{\beta_{i+1}}&V_{2i+2}&V_{2i+1}\ar[l]_{\alpha_{i+1}} \ar[d]\ar[r]^{\beta_i}&V_{2i}\ar[d]&V_{2i-1}\ar[l]_{\alpha_i} \ar[dl]^{\alpha'_i}\ar[r]^{\beta_i}&\cdots&\\
&                                           &                &V'_{2i+1}\ar[r]^{\beta'_i} \ar [ul]^{\alpha'_{i+1}}                &V'_{2i}         &                                        &          &}

\vskip .2in

\hskip 1in $V'_{2i+1}= V_{2i+1}/ \ker(\alpha_{i+1}), \quad  V'_{2i}= V_{2i}/ \beta_i(\ker(\alpha_{i+1}))$

\vskip .2in

Transformation $T_3(i)\rho$:

\xymatrix{
&\cdots\ar[r]^{\beta_{i+1}} &V_{2i+2}  &V_{2i+1}\ar[l]_{\alpha_{i+1}}\ar[r] ^{\beta_i}             &V_{2i}                  &V_{2i-1}\ar[dl]^{\alpha'_i} \ar[l]_{\alpha_i}\ar[r]^{\beta_i}&\cdots&\\
&                    &                 &V'_{2i+1}\ar[ul]^{\alpha'_{i+1}}\ar[u]\ar[r]^{\beta'_i}   &V'_{2i}\ar[u]         &                                        &          &}
\vskip .2in 
\hskip 1in $V'_{2i}= \alpha_{i}(V_{2i-1}) \quad  V'_{2i+1}= \beta_i^{-1}(\alpha_i(V_{2i-1}))$
\vskip .2in 

Transformation $T_4(i)\rho$:

\xymatrix{
&\cdots&V_{2i+1}\ar[l]_{\alpha_{i+1}}\ar[dr]_{\beta'_{i}}\ar[r]^{\beta_i}&V_{2i}                      &V_{2i-1}\ar[l]_{\alpha_{i}}   \ar[r]^{\beta_{i-1}}           &V_{2i-2}      &\cdots\ar[l]&\\
&           &                               &V'_{2i}\ar[u]                      &V'_{2i-1}\ar[l]_{\alpha'_i} \ar[u]\ar[ur]_{\beta'_{i-1}}  &                     &          &}

\vskip .2in

\hskip 1in $V'_{2i}= \beta_i(V_{2i+1})  \quad  V'_{2i-1}= \alpha^{-1}_{i}(\beta_i(V_{2i+1})).$

\vskip .1in

The following observations follow straightforwardly from the
definitions.

$T_1(i)\rho$  eliminates all bar codes of the form 
$(i-1,i)$ and $(i-1,i],$ if the case, 
shrinks each bar code of the form $(i-1,k]$ and $(i-1,k),$ $k\geq (i+2),$ 
into bar codes $(i,k]$ and $(i,k)$ respectively
with the convention that $(i-1,k\}$ is $(m, m+k\}$ when $i=1$, and
leaves all  other barcodes and Jordan cells unchanged. If $\beta_{i-1}$ is injective then $T_1(i)\rho=\rho.$

$T_2(i)\rho$ eliminates all bar codes of the 
form $(i,i+1)$ and $[i,i+1),$if the case,
shrinks each bar code of the form $[l,i+1)$ and $(l,i+1),$  $l\leq i-1,$ 
into bar codes $[l,i)$ and $(l,i)$ respectively,
and leaves any other barcodes and Jordan cells unchanged. If $\alpha_{i+1}$ is injective then $T_2(i)\rho=\rho.$

Type $T_3(i)\rho$ 
eliminates all bar codes of the form $[i,i]$ and $[i,i+1),$if the case,
shrinks each bar code of the forms $[i, k)$ and $[i,k]$, $k\geq i+1,$ into 
the bar codes $[i+1,k)$ and $[i+1, k]$ respectively,
and leaves all  other  type of barcodes and Jordan cells unchanged. If $\alpha_{i}$ is surjective then $T_3(i)\rho=\rho.$

Type $T_4(i)\rho$ 
eliminates all bar codes of the form $[i,i]$ and $(i-1,i],$ if the case,
shrinks each bar code of the forms $(l, i] $ and $[l,i],$ $l\leq i-1,$ 
into the bar codes $(l, i-1]$ and $[l,i-1]$ respectively 
with the convention that $\{l,0\}$ is identified to $\{l+m, m\}$, 
and leaves all other type of barcodes and Jordan cell unchanged.If $\beta_{i}$ is surjective then $T_4(i)\rho=\rho.$

In deciding "if the case " the following proposition is of use.
Let $\sharp\{i,j\}_\rho$ denote the number of bar codes of type 
$\{i, j\}$ for a representation $\rho.$
We have the following proposition which can be derived using the inspection  of the transformations described above.

\begin{proposition}\label {P7} $($\cite{BD11}$)$
~
\begin{enumerate}
\item 
$\sharp (i,i+1)_\rho= \dim \ker\beta_i\cap\ker\alpha_{i+1}$
\item
 $\sharp [i,i]_\rho= \dim (V_{2i} / ((\beta_i (V_{2i+1}) + \alpha_i (V_{2i-1}))$
\item 
$\sharp (i,i+1]_\rho =
\dim(\beta_i(V_{2i+1}) +\alpha_i(\ker\beta_{i-1}))   -  \dim(\beta_i(V_{2i+1}))$
\item 
$\sharp [i, i+1)_\rho =
\dim(\alpha_i(V_{2i-1}) +\beta_i(\ker\alpha_{i+1}))-\dim(\alpha_i(V_{2i-1}))$
\end{enumerate}
\end{proposition}

Note that  unless at least one elimination is performed each of these transformation  is ineffective (i.e.$= Id.$) so an algorithm based on successive applications of the transformations eventually stops.

\section{Proof of the main results but Theorem~\ref{P1}}\label{S5}

Since a tame real valued map can be regarded as a tame angle valued map by identifying $\mathbb R$ to  an open subset of 
$S^1,$ we will consider only  tame angle valued maps.

Let $f\colon X\to S^1$ be a tame map with $m$ critical angles $s_1,s_2,\dotsc,s_m$ and regular angles $t_1,t_2,\dotsc,t_m$.
First observe that, up to homotopy, the space $X$ and the map $f\colon X\to S^1$ can be 
regarded as the iterated mapping torus $\mathcal T$ and the map  $f^{\mathcal T}\to[0,m]/{\sim}$ described below. 
Consider the collection of spaces and continuous maps:
\begin{equation}\label{C}
X_m=X_0\xleftarrow{b_0=b_m}R_1
\xrightarrow{a_1}X_1
\xleftarrow{b_1}R_2
\xrightarrow{a_2}X_2\leftarrow\cdots\rightarrow X_{m-1}
\xleftarrow{b_{m-1}}R_m 
\xrightarrow{a_m}X_m
\end{equation}
with $R_i:=X_{t_i}$ and $X_i:=X_{s_i}$ and denote by $\mathcal T=T(\alpha_1 \cdots \alpha_m;\beta_1\cdots\beta_m)$ the space obtained from the disjoint union 
$$
\Bigl(\bigsqcup_{1\leq i\leq m}R_i\times[0,1]\Bigr)\sqcup\Bigl(\bigsqcup_{1\leq i\leq m}X_i\Bigr)
$$
by identifying $R_i\times\{1\}$ to $X_i$ by $\alpha_i$ and $R_i\times\{0\}$ to $X_{i-1}$ by $\beta_{i-1}$. 
Denote by $f^{\mathcal T}\colon\mathcal T\to[0,m]/{\sim}=S^1$ where $f^{\mathcal T}\colon R_i\times[0,1]\to[i-1,i]$ is the projection on
$[0,1]$ followed by the translation of $[0,1]$ to $[i-1,i]$ and $ [0,m]/{\sim}$ the space obtained from the segment $[0,m]$ by identifying the ends. 
This map is a {\it homotopical reconstruction} of $f\colon X\to S^1$ provided that, with the choice of angles  
$t_i$, $s_i,$ the maps $a_i$, $b_i$ are those described in section~\ref{S1} for $X_i:=f^{-1}(s_i)$ and  $R_i:=f^{-1}(t_i)$.

Let $\mathcal P'$ denote the space obtained from the disjoint union 
$$
\Bigl(\bigsqcup_{1\leq i\leq m}R_i\times(\epsilon,1]\Bigr)\sqcup\Bigl(\bigsqcup_{1\leq i\leq m} X_i\Bigr)
$$
by identifying $R_i\times\{1\}$ to $X_i$ by $\alpha_i$, and $\mathcal P''$ denote the space obtained from the disjoint union 
$$
\Bigl(\bigsqcup_{1\leq i\leq m}R_i\times[0,1-\epsilon\Bigr)\sqcup\Bigl(\bigsqcup_{1\leq i\leq m}X_i\Bigr)
$$
by identifying $R_i\times\{0\}$ to $X_{i-1}$ by $\beta_{i-1}$.

Let $\mathcal R=\bigsqcup_{1\leq i\leq m}R_i$ and $\mathcal X=\bigsqcup_{1\leq i\leq m}X_i$. Then, one has:
\begin{enumerate}[(i)]
\item 
$\mathcal T=\mathcal P'\cup\mathcal P''$,
\item 
$\mathcal P'\cap\mathcal P''=\bigl(\bigsqcup_{1\leq i\leq m}R_i\times(\epsilon,1-\epsilon)\bigr)\sqcup\mathcal X$, and
\item 
the inclusions $\bigl(\bigsqcup_{1\leq i\leq m}R_i\times\{1/2\}\bigr)\sqcup\mathcal X\subset\mathcal P'\cap\mathcal P''$
as well as the obvious inclusions $\mathcal X\subset\mathcal P'$ and $\mathcal X\subset\mathcal P''$ are homotopy equivalences.
\end{enumerate}
The Mayer--Vietoris long exact sequence applied to $\mathcal T= \mathcal P'\cup \mathcal P''$ leads to the diagram:
\begin{center}
$$
\xymatrix{
&                                                       & H_r(\mathcal R)\ar[r]^{M_r(\rho_r)}                                                  & H_r(\mathcal X)\ar[rd]                                               &&\\
\cdots\ar[r]&H_{r+1}(\mathcal T)\ar[ur]
\ar[r]^-{\partial_{r+1}}&H_r(\mathcal R)\oplus H_r(\mathcal X)\ar[u]^{pr_1}\ar[r]^N  &H_r(\mathcal X)\oplus H_r(\mathcal X)\ar[u]^{(Id,-Id)}{\ar[r]^-{(i^r, -i^r)}} &H_r(\mathcal T)\ar[r] &\\
&                                                       &H_r(\mathcal X)\ar[u]^{in_2}\ar[r]^{Id}                                           &H_r(\mathcal X)\ar[u]^{\Delta}                                                 &&
}
.$$
Diagram~2
\end{center}

Here $\Delta$ denotes the diagonal, $in_2$ the inclusion on the second component, $pr_1$ the projection on the first component, 
$i^r$ the linear map induced in homology by the inclusion $\mathcal X\subset \mathcal T.$ The matrix  $M_r(\alpha,\beta)$ is defined by 
\begin{equation*}
\begin{pmatrix}
\alpha^r_1 & -\beta^r_1 & 0          & \cdots         & 0\\
0          & \alpha^r_2 & -\beta^r_2 & \ddots         & \vdots\\
\vdots     & \ddots     & \ddots     & \ddots         & 0 \\
0          & \cdots     & 0          & \alpha^r_{m-1} & -\beta^r_{m-1}\\
-\beta^r_m & 0          & \cdots     & 0              & \alpha^r_m
\end{pmatrix}_. 
\end{equation*}
with $\alpha^r_i\colon H_r(R_i)\to H_r(X_i)$ and $\beta^r_i\colon H_r(R_{i+1})\to H_r(X_i)$ induced by the maps $\alpha_i$ and $\beta_i$ and the matrix $N$ is defined by 
\begin{equation*}
\begin{pmatrix}
\alpha^r & \Id\\
-\beta^r & \Id
\end{pmatrix}
\end{equation*}
where $\alpha^r$ and $\beta^r$ are the matrices 
$$
\begin{pmatrix}
\alpha^r_1 & 0          & \cdots &    0 \\
0          & \alpha^r_2 & \ddots &\vdots \\
\vdots     & \ddots     & \ddots &0 \\
0          & \cdots     & 0      & \alpha^r_{m-1} 
\end{pmatrix}
\quad\text{and}\quad
\begin{pmatrix}
0         & \beta^r_1 & 0         & \dots  & 0 \\
0         & 0         & \beta^r_2 & \ddots & \vdots \\
\vdots    & \vdots    & \ddots    & \ddots & 0 \\
0         & 0         & \dots     & 0      & \beta^r_{m-1}\\
\beta^r_m & 0         & \dots     & 0      & 0
\end{pmatrix}_.
$$
As a consequence the long exact sequence \footnote{In subsequent papers this long exact sequence is referred to as the {\it canonical sequence} associated with a tame  real or circle valued map. We like to regard it as an analogue of the Morse complex associated to a generic  gradient like vector field for a Morse real or circle valued map.} 
\begin{equation}\label{MV}
\cdots\to H_r(\mathcal R)\xrightarrow{M(\rho_r)}H_r(\mathcal X)\to H_r(\mathcal T)\to H_{r-1}(\mathcal R)\xrightarrow{M(\rho_{r-1})}H_{r-1}(\mathcal X)\to\cdots
\end{equation}
from Diagram~2 implies the short exact sequence 
\begin{equation}\label{EE13}
0\to\coker M(\rho_r)\to H_r(\mathcal T)\to\ker M(\rho_{r-1})\to0
\end{equation}
and then the  noncanonical isomorphism
\begin{equation} \label{E11}
H_r(\mathcal T)=\coker M(\rho_r)\oplus\ker M(\rho_{r-1}).
\end{equation}
Any splitting $s\colon\ker M(\rho_{r-1})\to H_r(\mathcal T)$ in the short exact sequence~\eqref{EE13} provides an isomorphism~\eqref{E11}.
Note that the long exact sequence \eqref{MV} holds also for homology with local coefficients (i.e. homology with coefficients in a representation). Such sequence   can be derived from a similar diagram as  Diagram 2, where instead of homology with coefficients in $\kappa$ one uses homology with local coefficients;  in case of interest to us with coefficients in 
$u\xi_f.$  In order to calculate 
$H_r(X;u\xi_f)$, $u\in\kappa\setminus 0$, we will use this new diagram. Since the local coefficients system $u\xi_f,$ when restricted to 
$X_s$ for any $s\in S^1$ is  trivial,  in this new diagram  all  vector spaces and linear maps but $M(\rho_r)$ remain the same as in Diagram 2. The  map  
 $M(\rho_r)$ gets  replaced by  $M((\rho_r)_u)$.  In the matrix $M((\rho_r)_u)$ all $\beta_i$ and all $\alpha_i$ but $\alpha_1$ are the same as in $M(\rho_r)$ with $\alpha_1$
replaced by the composition 
\[ 
H_r(X_{1})\xrightarrow{\alpha_1}H_r(X_{2})\xrightarrow{u}H_r(X_{2}).
\] 
The second arrow is induced by the multiplication by $u$ on the field $\kappa$. As above one obtains the non canonical isomorphism
\begin{equation}\label{E12}
H_r(X;u\xi_f)=\coker M((\rho_r)_u)\oplus\ker M((\rho_{(r-1)})_u)
\end{equation}

Theorem~\ref{T1}: Parts 1 a. and 2a. are  a straightforward consequence of Propositions~\ref{PR}(i), \ref{AP01}, \ref{AP1} and the regularity of the Jordan blocks 
representations. Parts  1 b. and 2.b are  a consequence of equation \eqref{E11} and of Propositions~\ref{O37} and \ref{O38}. Parts 1 c. and 2 c. are a particular case of Theorem~\ref{T3} 
parts 1 b.and 1.c. 

Theorem~\ref{T2}: Part (1) is a consequence of equation \eqref{E12} and of Proposition~\ref{O38}.
Part (2) follows from Theorem~ \ref{T3} parts 2. and  3. 
Note that Theorem~\ref{T1}( 1 b.) is also a consequence of Theorem~\ref{T2} Part 1) for $u=1$.

A few additional observations are necessary for the proof of Theorem~\ref{T3}.

A collection of topological spaces and continuous maps $\{X_i, R_i, a_i\colon R_i\to X_i, $ 
\newline $b_i\colon R_{i+1}\to X_i, i\in\mathbb Z\}$  
with  $X_i=\emptyset$, $i\leq n-1$, $i\geq m+1$ and $R_i=\emptyset$, $i\leq n$, $i\geq m+1$ 
 can be regarded as a collection  (\ref{C})  considered at the beginning of the section.
This is exactly what we obtain from a tame real valued map whose critical points are indexed by the integers between $n$ and $m,$   in particular 
for $\tilde f\colon\tX_{[n,m]}\to\mathbb R,$ after composing with a homeomorphism of $\mathbb R$ to make  the critical values of $\tilde f$ indexed by integers. 
The naturality of the sequence \eqref{MV} leads, for $c$, $a$, $b$, $d$ critical values with 
$c\leq a\leq b\leq d$, to the following commutative diagram
\begin{equation}
\xymatrix{
0\ar[r] & \coker M(T_{a,b}(\tilde \rho_{r}))\ar[d]^{v_l} \ar[r] &H_r(\tilde X_{[a,b]}) \ar[r]\ar[d]^v &\ker M(T_{a,b}(\tilde \rho_{r-1}))\ar[r]\ar[d]^{v_r}& 0\\
0\ar[r] & \coker M(T_{c,d}(\tilde \rho_{r}))\ar[r] &H_r(\tilde X_{[c,d]}) \ar[r] &\ker M(T_{c,d}(\tilde \rho_{r-1}))\ar[r]& 0}
\end{equation}
with $v$ induced by inclusion and $v_r$ injective. 

Indeed,  given a decomposition of the $G_{2m}-$ representation $\rho_r$ as a sum of barcodes and Jordan cells,  for any $[a,b],$ the $\mathcal Z-$ representation with compact support $T_{a,b}(\tilde \rho_{r-1}))$ has a decomposition as a sum of bar codes. The open bar codes in this decomposition, in view of  Observation~\ref{O39} are exactly   $$\{I =(\alpha, \beta)\in \tilde{\mathcal B}^o_{r-1} \mid I \subset [a,b]\}.$$ In view of 
of Proposition~\ref{O37}  one obtains a base  
in $\ker M(T_{a,b}(\tilde \rho_{r-1}))$ indexed by these open bar codes, say $e^{a,b}_I.$  
Note that $v_r$ sends $e^{a,b}_I$ into $e^{c,d}_I$ for any $I$ with $I=(\alpha,\beta) \subset [a,b].$   This shows the injectivity of $v_r.$  

If $I=(\alpha,\beta),$ choose $s_I\in H_r(\tilde X_{[\alpha,\beta]})$  to be a lift of of $e_I^{\alpha, \beta}\in \ker (M(T_{[\alpha, \beta]})$  w.r. to the surjective map $H_r(\tilde X_{[\alpha, \beta]}\to \ker (M(T_{[\alpha, \beta]}).$  For each $[a,b]$define the splitting  
$$s_{[a,b]}\colon\ker(M(T_{a,b}(\tilde \rho_{r-1})))\to H_r(\tilde X_{[a,b]})$$ by assigning to $e^{a,b}_I$ $I\in \tilde{\mathcal B}^o_{r-1} \mid I \subset (a,b)$  the image of $s_I$
in $H_r(\tilde X_{[a,b]}$ by the linear map induced by the inclusion $[\alpha, \beta]\subseteq [a,b].$

This shows that it is possible to choose  splittings 
$$
s_{[a,b]}\colon\ker(M(T_{a,b}(\tilde \rho_{r-1})))\to H_r(\tilde X_{[a,b]})
$$
and 
$$
s_{[c,d]}\colon\ker(M(T_{c,d}(\tilde \rho_{r-1})))\to H_r(\tilde X_{[c,d]}),
$$ 
satisfying $v\cdot s_{[a,b]}= s_{[c,d]}\cdot v_l$, and this for all pairs of critical values.

Note that:
\begin{enumerate}[(i)]
\item 
In view of tameness of $f$ it suffices to prove Theorem~\ref{T3} only for $a$, $b$ critical values of $\tilde f$, i.e.\ $\pi(a)$, $\pi(b)$ critical angles. 
\item 
$H_r(\tilde X)=\lim _{n\to\infty}H_r(\tilde X_{[a(n), b(n)]})$ with $a(n)$, $b(n)$ critical values of $\tilde f$ and $\lim_{n\to\infty}a(n)=-\infty$,
$\lim_{n\to\infty}b(n)=\infty$.
\item 
$\rho_r(\tilde f|_{X_{[a,b]}})=T_{a,b}(\tilde \rho_r(f))$ and Observation~\ref{O39} calculates the closed and open bar codes of $T_{a,b}(\tilde \rho_r(f))$.
\end{enumerate}

\begin{obs}\label{O51} 
1. Choose a decomposition of $\rho_r$ and $\rho_{r-1}$ in indecomposable  components and a splitting $s\colon\ker(M(\rho_{r-1})_u)\to H_r(\mathcal T;u\xi)$,
$\xi=\xi_{f^{\mathcal T}}$ in the short exact sequence 
$$
0\to\coker M((\rho_r)_u)\to H_r(\mathcal T)\to\ker M((\rho_{r-1})_u)\to0,
$$
resp.  compatible splittings $s_{[a,b]}\colon\ker (M(T_{a,b}(\tilde\rho_{r-1}))\to H_r(\tilde X_{[a,b]})$ in the short exact sequences 
$$
0\to\coker M(T_{a,b}(\tilde \rho_{r}))\to H_r(\tilde X_{[a,b]})\to\ker M(T_{a,b}(\tilde \rho_{r-1}))\to0.
$$
In view of Proposition~\ref{O38} and Observation~\ref{O39} 
one obtains the canonical isomorphisms 

\begin{equation}
\begin{aligned}
\Psi_r\colon\kappa [\{ I\in \tilde{\mathcal B}_r\mid I\ni t\}\sqcup \tilde{\mathcal J}_r] \to H_r(\tX_{t})\\
\end{aligned}
\end{equation}
for any $t\in \mathbb R$
and

\begin{equation}\label{E6}
\Psi_r\colon\kappa\bigl[\mathcal B^c_r \sqcup \mathcal B^o_{r-1} \sqcup \mathcal J^{u^{-1}}_r \sqcup \mathcal J^{u^{-1}}_{r-1}\bigr]\to H_r(\mathcal T; u\xi)
\end{equation}
resp.\
\begin{equation}\label{E66}
\Psi_r([a,b])\colon\kappa\bigl[{\mathcal B}^c (T_{a,b }(\tilde \rho_r))\sqcup {\mathcal B}^o(T_{a,b}(\tilde\rho_{r-1})\bigr]\to H_r(\tilde X_{[a,b]}).
\end{equation}
for any two critical values $a, b.$

2. Suppose $c\leq a\leq b\leq d$ are critical values. The following diagram is commutative.
\begin{center}
$$
\begin{CD} 
\kappa\bigl[\mathcal B^c(T_{a,b}(\tilde \rho_r))\sqcup \mathcal B^o(T_{a,b}(\rho_{r-1})\bigr]@>\Psi_r( [a,b])>> H_r(\tilde X_{[a,b]})\\
@V \varphi VV    @VVV\\
\kappa\bigl[\mathcal B^c(T_{c,d}(\tilde \rho_r)\sqcup \mathcal B^o(T_{c,d}(\tilde\rho_{r-1})\bigr]@>\Psi_r([c,d])>> H_r(\tilde X_{[c,d]})
\end{CD}
$$
Diagram 3
\end{center}
The right side vertical arrow in Diagram 3  is induced by inclusion and the left side vertical arrow 
$$
\varphi\colon\kappa[\mathcal B^c(T_{a,b}(\tilde\rho_r))] \oplus \kappa[ \mathcal B^o(T_{a,b}(\tilde\rho_{r-1}))]  \to \kappa[\mathcal B^c(T_{c,d}(\tilde\rho_r))] \oplus k[ \mathcal B^o(T_{c,d}(\rho_{r-1}))]
$$ 
is the direct sum of the linear maps 
$$
\varphi_1\colon\kappa[\mathcal B^c(T_{a,b}(\tilde\rho_r))] \to \kappa[\mathcal B^c(T_{c,d}(\tilde\rho_r))],
\quad
\varphi_2\colon\kappa[\mathcal B^o(T_{a,b}(\tilde\rho_r))] \to \kappa[\mathcal B^o(T_{c,d}(\tilde\rho_r))]
.$$
The map $\varphi_2$ is induced by inclusion and $\varphi_1$ is the linear extension of the map defined on $\mathcal B^c(T_{a,b}(\tilde\rho_r))$ as follows.
If an $I\in\mathcal B^c(T_{a,b}(\tilde\rho_r))$ remains an element in $\mathcal B^c(T_{c,d}(\tilde\rho_r))$ then $\varphi_1 (I)=I$, if not $\varphi_1 (I)=0.$ 
\end{obs}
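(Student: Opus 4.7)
The plan is to assemble the desired isomorphisms directly from the short exact sequence~\eqref{EE13} (together with its local-coefficient and compact-support analogues) and the canonical isomorphisms of Propositions~\ref{O37} and~\ref{O38}. No new machinery is needed; the content lies in arranging the arrows so that everything commutes, in particular across inclusions of intervals.

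For the isomorphism on the fiber $H_r(\tilde X_t)$ I would argue directly from the indecomposable decomposition of $\tilde\rho_r$. Every finite-support bar-code summand of $\tilde\rho_r$ with underlying interval $I$ contributes exactly one dimension at the vertex corresponding to $t$ precisely when $t\in I$ (read off the dimension vectors of Propositions~\ref{AP01} and~\ref{AP1}), while every Jordan block summand $J=(V,T)$ contributes $\dim V=k(J)$ dimensions uniformly at every vertex, i.e.\ one dimension for each member of $\tilde{\mathcal J}_r$. Mapping each basis symbol of $\kappa[\{I\in\tilde{\mathcal B}_r\mid I\ni t\}\sqcup\tilde{\mathcal J}_r]$ to the corresponding summand produces the claimed $\Psi_r$.

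For the isomorphism~\eqref{E6} on $H_r(\mathcal T;u\xi)$, I would combine the splitting $s$ fixed by hypothesis (which identifies $H_r(\mathcal T;u\xi)$ with $\coker M((\rho_r)_u)\oplus\ker M((\rho_{r-1})_u)$, as in~\eqref{E12}) with Proposition~\ref{O38} applied at parameter $u$ to identify each summand with the corresponding free $\kappa$-vector space on bar codes and Jordan cells. The compact-support isomorphism~\eqref{E66} is obtained in exactly the same way: start from the analogous short exact sequence for $\tilde X_{[a,b]}$, apply Proposition~\ref{O37} to $T_{a,b}(\tilde\rho_r)$ and $T_{a,b}(\tilde\rho_{r-1})$, and identify their bar-code sets via Observation~\ref{O39}.

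For the commutativity of Diagram~3 in part~(2), I would invoke the naturality of Mayer--Vietoris under the inclusion $\tilde X_{[a,b]}\subseteq\tilde X_{[c,d]}$: this produces a map of short exact sequences whose cokernel and kernel components are induced by the evident inclusions $T_{a,b}(\tilde\rho_s)\hookrightarrow T_{c,d}(\tilde\rho_s)$ for $s=r,r-1$. Under the identifications of Proposition~\ref{O37} these become $\varphi_1$ and $\varphi_2$ respectively: closed bar codes of $T_{a,b}(\tilde\rho_r)$ either survive into $T_{c,d}(\tilde\rho_r)$ (their canonical generators transfer) or are absorbed into longer open/mixed intervals (their generators die), exactly matching $\varphi_1$; and open bar codes are tracked by $\varphi_2$ via inclusion of the corresponding canonical kernel generators. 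The main obstacle---and the reason this is recorded as an \emph{Observation} rather than as a one-line consequence---is that the splittings $s_{[a,b]}$ must be chosen compatibly across all pairs of critical values at once. This is handled by the explicit global construction already sketched: once one fixes, for each open bar code $I=(\alpha,\beta)$, a lift $s_I\in H_r(\tilde X_{[\alpha,\beta]})$ of the canonical kernel generator $e^{\alpha,\beta}_I$, pushing these lifts forward via inclusion produces splittings $s_{[a,b]}$ that are automatically coherent.
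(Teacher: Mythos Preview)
Your proposal is correct and follows essentially the same approach as the paper. The paper does not give a separate proof of this Observation; rather, the preceding paragraphs build the compatible splittings $s_{[a,b]}$ exactly as you describe (choosing lifts $s_I$ for each open bar code $I=(\alpha,\beta)$ and pushing forward by inclusion), and the Observation is then recorded as a consequence of this construction together with Propositions~\ref{O37}, \ref{O38} and Observation~\ref{O39}, which is precisely the assembly you outline.
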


Observations~\ref{O51} and item (ii) above lead to the commutative diagram
\begin{center}
$$
\begin{CD} 
\kappa\bigl[\mathcal B^c(T_{a,b}(\tilde \rho_r))\sqcup \mathcal B^o(T_{a,b}(\tilde \rho_{r-1})\bigr]@>\Psi_r( [a,b])>> H_r(\tilde X_{[a,b]})\\
@V \varphi' VV    @VVV\\
\kappa\bigl[(\tilde{ B^c_r}\sqcup\tilde{\mathcal J}_r) \sqcup \tilde{\mathcal B}^o_{r-1}\bigr]@>\Psi_r>> H_r( \tilde X)\\
@V \varphi'' VV    @VVV\\
\kappa\bigl[{ (B^c_r}\sqcup{\mathcal J}^1_r)\sqcup ({\mathcal B}^o_{r-1}\sqcup \mathcal J^1_{r-1})\bigr]@>\hat\Psi_r>> H_r( X).
\end{CD}
$$
Diagram~4
\end{center}
The right side vertical arrows are induced by inclusion and by the covering map $p\colon\tilde X\to X$, and the left side vertical arrows are defined as follows.

The map 
$$
\varphi'\colon\kappa[\mathcal B^c(T_{a,b}(\tilde \rho_r))\sqcup \mathcal B^o(T_{a,b}(\rho_{r-1})) ] \to \kappa[ (\tilde{ B^c_r}\sqcup\tilde{\mathcal J}_r) \sqcup \tilde{\mathcal B}^o_{r-1}] 
$$  
is the direct sum of the linear maps 
$$
\varphi'_1\colon\kappa[\mathcal B^c(T_{a,b}(\tilde \rho_r))] \to \kappa[ \tilde{ B^c_r}\sqcup\tilde{\mathcal J}_r],
\qquad
\varphi'_2\colon\kappa[ \mathcal B^o(T_{a,b}(\rho_{r-1}))]\to  \kappa[ (B^o_{r-1})],
$$
and the map 
$$
\varphi''\colon \kappa[ (\tilde{ B^c_r}\sqcup\tilde{\mathcal J}_r) \sqcup \tilde{\mathcal B}^o_{r-1}] 
\to k[{ (B^c_r}\sqcup{\mathcal J}^1_r)\sqcup ({\mathcal B}^o_{r-1}\sqcup \mathcal J^1_{r-1})] 
$$ 
is the direct sum of the linear maps 
$$
\varphi''_1\colon \kappa[ (\tilde{ B^c_r}\sqcup\tilde{\mathcal J}_r) ] \to \kappa[\mathcal B^c_r \sqcup \mathcal J^1_r)],
\qquad
\varphi''_2\colon\kappa[ \tilde{\mathcal B}^o_{r-1}]\to \kappa[ \mathcal B^o_{r-1}].
$$ 
The map $\varphi'_2$ is induced by inclusion and $\varphi'_1$ is the linear extension on the map defined on
$\tilde{ B^c_r}\sqcup\tilde{\mathcal J}_r$ as follows. If $I$ is an element of $\tilde{B^c_r}\sqcup\tilde{\mathcal J}_r$ which is actually an element of  
$\tilde B^c_r$ or an element of $\tilde {\mathcal J}_r$ then $\varphi'_1(I)=I$, otherwise $\varphi'_1(I)=0$. 
The maps $\varphi''_1$ and $\varphi''_2$ are the linear extensions of the maps defined on $(\tilde{B^c_r}\sqcup\tilde{\mathcal J}_r)$ 
and on $\tilde{\mathcal B}^o_{r-1}$ as follows.

An element $I+2\pi k \in \tilde{ \mathcal B}^c_r$ with $I\in\mathcal B^c_r$ is sent by $\varphi_1''$ to $I$ and an element in  
$\tilde{\mathcal J}_r$ which corresponds to the Jordan cell in $J\in \mathcal J^1_r$ is sent to $J$. All other elements are sent to zero.

An element $I+2\pi k\in\tilde{\mathcal B}^o_{r-1}$ with $I\in\mathcal B^o_{r-1}$ is sent by $\varphi_2''$ to $I$.


Theorem~\ref{T3} (Part 1) follows from Diagram~4 by inspecting its left side. To derive (Part 2) and (Part 3) observe that the additive group of integers  
$\mathbb Z$ acts on the set $\tilde{\mathcal B}^c_r \sqcup \tilde{\mathcal B}^o_r$ freely by translation  with the quotient set  
$\mathcal B^c_r \sqcup \mathcal B^o_r$  and trivially on $\tilde {\mathcal J}_r$. The $\mathbb Z[T^{-1},T]$-module structure of $H_r(\tX)$  
corresponds via $\Psi$ to the module structure on $\kappa[(\tilde{\mathcal B} ^c_r \sqcup (\tilde {\mathcal J}_r)\sqcup (\tilde{\mathcal B}^o_{r-1})]$ 
induced by these actions. Theorem~\ref{T4} (Part 2) follows from Theorem \ref{T3} and  (Part 1) from Theorem~\ref{T4} (Part 2) and Theorem~\ref{T1}.

\section{Proof of Theorem~\ref{P1}}\label{S6}

Suppose $f\colon X\to S^1$ is a continuous map. Let $\theta\in S^1$ be a tame value \footnote{i.e. $X_\theta$ is a deformation retract of an open neighborhood of $X_\theta$} 
and denote its level by $X_\theta=f^{-1}(\theta)$. Moreover, let $H_*(X_\theta)$
denote its singular homology with coefficients in any fixed unital ring $\kappa$ which is a $\kappa-$ module (vector space when $\kappa$ is a field).
To this situation we will associate a linear relation,
$$
R\colon H_*(X_\theta)\leadsto H_*(X_\theta),
$$
see section~\ref{SS:mon} below. One can think of a linear relation as a partially defined, 
multivalued linear map, see section~\ref{SS:reg} below.
While this relation $R$ depends very much on the tame value $\theta$ and the function $f$, its
regular part (a linear isomorphism to be defined in section~\ref{SS:reg}),
$$
R_\reg\colon H_*(X_\theta)_\reg\xrightarrow\cong H_*(X_\theta)_\reg,
$$
turns out to be independent on $\theta$ and a homotopy invariant of $f.$. More precisely, we will show that $R_\reg$ coincides with the monodromy induced by the deck transformation 
on a certain invariant submodule of $H_*(\tilde X)$, where $\tilde X$ denotes the infinite cyclic covering associated with $f$. 
For the precise statement see Theorem~\ref{T:monreg} below. As a corollary of these considerations we obtain a proof of Theorem~\ref{P1}.

\subsection{Linear relations and their regular part}\label{SS:reg}

Suppose $V$ and $W$ are two modules over a fixed commutative ring.
Recall that a linear relation from $V$ to $W$ can be considered as a submodule $R\subseteq V\times W$. 
Notationally, we indicate this situation by $R\colon V\leadsto W$. For $v\in V$ and $w\in W$ we write $vRw$ iff $v$ is in 
relation with $w$, i.e.\ $(v,w)\in R$. Every module homomorphism $V\to W$ can be regarded as a linear relation $V\leadsto W$ 
in a natural way. If $U$ is another module, and $S\colon W\leadsto U$ is a linear relation, then
the composition $SR\colon V\leadsto U$ is the linear relation defined by $v(SR)u$ iff there exists $w\in W$ 
such that $vRw$ and $wSu$. Clearly, this is an associative composition generalizing the ordinary composition of module
homomorphisms. For the identical relations we have $R\id_V=R$ and $\id_WR=R$. Modules over a fixed commutative ring and linear relations 
thus constitute a category. If $R\colon V\leadsto W$ is a linear relation we define a linear relation $R^\dag\colon W\leadsto V$ 
by $wR^\dag v$ iff $vRw$. Clearly, $R^{\dag\dag}=R$ and $(SR)^\dag=R^\dag S^\dag$.

A linear relation $R\colon V\leadsto W$ gives rise to the following submodules:
\begin{align*}
\dom(R)&:=\{v\in V\mid\exists w\in W:vRw\}
\\
\img(R)&:=\{w\in W\mid\exists v\in V:vRw\}
\\
\ker(R)&:=\{v\in V\mid vR0\}
\\
\mul(R)&:=\{w\in W\mid 0Rw\}
\end{align*}
Clearly, $\ker(R)\subseteq\dom(R)\subseteq V$, and $W\supseteq\img(R)\supseteq\mul(R)$.
Note that $R$ is a homomorphism (map) iff $\dom(R)=V$ and $\mul(R)=0$. One readily verifies:

\begin{lemma}\label{L:1}
For a linear relation $R\colon V\leadsto W$ the following are equivalent:
\begin{enumerate}[(a)]
\item
$R$ is an isomorphism in the category of modules and linear relations.
\item
$\dom(R)=V$, $\img(R)=W$, $\ker(R)=0$, and $\mul(R)=0$.
\item
$R$ is an isomorphism of modules.
\end{enumerate}
In this case $R^{-1}=R^\dag$.
\end{lemma}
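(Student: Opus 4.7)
The plan is to establish the equivalences via the cycle $(c)\Rightarrow(b)\Rightarrow(c)$ together with $(c)\Rightarrow(a)\Rightarrow(b)$, and then verify the final claim $R^{-1}=R^\dag$ separately. The implications $(c)\Rightarrow(b)$ and $(c)\Rightarrow(a)$ are immediate: a module isomorphism $R\colon V\to W$, viewed as its graph in $V\times W$, obviously has $\dom(R)=V$, $\img(R)=W$, $\ker(R)=0$, $\mul(R)=0$, and its inverse as a module map supplies an inverse in the category of linear relations.

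For $(b)\Rightarrow(c)$, I would first argue that $\dom(R)=V$ together with $\mul(R)=0$ force $R\subseteq V\times W$ to be the graph of a well-defined map $V\to W$: existence of an image for each $v$ comes from $\dom(R)=V$, while uniqueness follows because $vRw_1$ and $vRw_2$ imply $0R(w_1-w_2)$ by linearity of the submodule $R$, so $w_1=w_2$. This map is automatically a module homomorphism since $R$ is a submodule. Then $\img(R)=W$ and $\ker(R)=0$ say precisely that this homomorphism is surjective and injective, hence an isomorphism of modules.

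The crux is $(a)\Rightarrow(b)$. Given an inverse $S\colon W\leadsto V$ with $SR=\id_V$ and $RS=\id_W$, I would deduce the four conditions by testing the identity relations against $0$. For $\dom(R)=V$: for each $v\in V$ the relation $v(SR)v$ holds, which by definition of composition produces some $w\in W$ with $vRw$. The surjectivity $\img(R)=W$ is symmetric, using $RS=\id_W$. To get $\mul(R)=0$, pick $w\in\mul(R)$, so $0Rw$; since also $0S0$ by linearity, composition gives $0(RS)w$, and $RS=\id_W$ forces $w=0$. The argument for $\ker(R)=0$ is the dual: if $vR0$, combine with $0S0$ to obtain $v(SR)0$, whence $v=0$ by $SR=\id_V$. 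The potential pitfall here is keeping straight the direction of composition and the asymmetric roles of $0$ as domain versus image element — that is the step I would double-check most carefully.

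Finally, to show $R^{-1}=R^\dag$, I would observe that under $(c)$ the relation $R$ is the graph $\{(v,Rv):v\in V\}$, so $R^\dag=\{(Rv,v):v\in V\}$, which is precisely the graph of the inverse module homomorphism. Alternatively, one can verify directly from condition $(b)$ that $R^\dag R=\id_V$ and $RR^\dag=\id_W$ by unwinding compositions and using $\ker(R)=\mul(R)=0$, $\dom(R)=V$, $\img(R)=W$; this gives an inverse in the categorical sense and uniqueness of inverses closes the argument.
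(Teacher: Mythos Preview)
Your proof is correct. The paper does not actually supply a proof of this lemma; it simply prefaces the statement with ``One readily verifies'' and leaves the details to the reader. Your cycle of implications and the verification of $R^{-1}=R^\dag$ fill in exactly those routine details, and the arguments for $(a)\Rightarrow(b)$---where you test the identities $SR=\id_V$ and $RS=\id_W$ against the element $0$ to kill $\ker(R)$ and $\mul(R)$---are carried out cleanly.
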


For a linear relation $R\colon V\leadsto V$, we introduce the following submodules:
\begin{align*}
K_+&:=\{v\in V\mid\exists k\,\exists v_i\in V:vRv_1Rv_2R\cdots Rv_kR0\}
\\
K_-&:=\{v\in V\mid\exists k\,\exists v_i\in V:0Rv_{-k}R\cdots Rv_{-2}Rv_{-1}Rv\}
\\
D_+&:=\{v\in V\mid\exists v_i\in V:vRv_1Rv_2Rv_3R\cdots\}
\\
D_-&:=\{v\in V\mid\exists v_i\in V:\cdots Rv_{-3}Rv_{-2}Rv_{-1}Rv\}
\\
D:=D_-\cap D_+&=\{v\in V\mid\exists v_i\in V:\cdots Rv_{-2}Rv_{-1}RvRv_1Rv_2R\cdots\},
\end{align*}
Clearly, $K_-\subseteq D_-\subseteq V\supseteq D_+\supseteq K_+$.
Also note that passing from $R$ to $R^\dag$, the roles of $+$ and $-$ get interchanged.
Moreover, we introduce a linear relation on the quotient module
\begin{equation*} 
V_\reg:=\frac{D}{(K_-+K_+)\cap D}
\end{equation*}
defined as the composition 
$$
V_\reg=\frac{D}{(K_-+K_+)\cap D}\overset{\pi^\dag}\leadsto D\overset\iota\leadsto V\overset R\leadsto V\overset{\iota^\dag}\leadsto D\overset\pi\leadsto\frac{D}{(K_-+K_+)\cap D}=V_\reg,
$$
where $\iota$ and $\pi$ denote the canonical inclusion and projection, respectively.
In other words, two elements in $V_\reg$ are related by $R_\reg$ iff they admit representatives in $D$
which are in related by $R$. We refer to $R_\reg$ as the \emph{regular part} of $R$.

\begin{proposition}\label{P:AA}
The relation $R_\reg\colon V_\reg\leadsto V_\reg$ is an isomorphism of modules. Moreover,
the natural inclusion induces a canonical isomorphism
\begin{equation}\label{E:100}
V_\reg=\frac{D}{(K_-+K_+)\cap D}\xrightarrow\cong\frac{(K_-+D_+)\cap(D_-+K_+)}{K_-+K_+}
\end{equation}
which intertwines $R_\reg$ with the relation induced on the right hand side quotient.
\end{proposition}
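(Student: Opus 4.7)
My plan is to verify that $R_\reg$ is a module isomorphism by checking the four criteria of Lemma~\ref{L:1}, and then to establish (\ref{E:100}) by descending the natural inclusion $D\hookrightarrow(K_-+D_+)\cap(D_-+K_+)$ to the quotients. Two elementary facts will be used throughout: $D_\pm$ are submodules of $V$, since $R$-chains can be added termwise and scaled by the linearity of $R\subseteq V\times V$, and $K_\pm\subseteq D_\pm$, since a finite chain extends to an infinite one by appending or prepending constant $0$-chains (using $(0,0)\in R$). The conditions $\dom(R_\reg)=V_\reg$ and $\img(R_\reg)=V_\reg$ are then easy: given $v\in D$, the first term $v_1$ of a forward chain $vRv_1Rv_2R\cdots$ lies in $D_+$ by the tail and in $D_-$ by concatenation with a backward chain for $v\in D_-$, whence $v_1\in D$ and $[v]R_\reg[v_1]$; the image case is symmetric.

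The crux is $\mul(R_\reg)=0$. I will assume $v\in(K_-+K_+)\cap D$, $vRw$, $w\in D$, and split $v=k_-+k_+$ with finite chains
\[
0Rz_{-m}R\cdots Rz_{-1}Rk_-\qquad\text{and}\qquad k_+Ry_1R\cdots Ry_nR0.
\]
Since $R$ is a submodule of $V\times V$, subtracting $(k_+,y_1)\in R$ from $(v,w)\in R$ yields $(k_-,w-y_1)\in R$. Splicing this with the backward chain for $k_-$ gives $0Rz_{-m}R\cdots Rz_{-1}Rk_-R(w-y_1)$, so $w-y_1\in K_-$; combined with $y_1\in K_+$ this forces $w\in K_-+K_+$, whence $[w]=0$ in $V_\reg$. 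The condition $\ker(R_\reg)=0$ follows by a mirror-image computation, subtracting instead $(z_{-1},w_-)\in R$ from $(v,w_-+w_+)\in R$.

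For (\ref{E:100}), injectivity of the descended map $V_\reg\to((K_-+D_+)\cap(D_-+K_+))/(K_-+K_+)$ is immediate, since an element of $D$ lying in $K_-+K_+$ is automatically in $(K_-+K_+)\cap D$. For surjectivity, given $v=k_-+d_+=d_-+k_+$ in the right-hand module, I will set $v':=d_+-k_+=d_--k_-$; then $v'\in D_+\cap D_-=D$ (using that $D_\pm$ are submodules and $K_\pm\subseteq D_\pm$) while $v-v'=k_-+k_+\in K_-+K_+$, so $v\equiv v'$ in the quotient. The intertwining of $R_\reg$ with the relation induced on the right-hand side is automatic in one direction from the functoriality of relations under inclusion; the reverse inclusion is forced because the same chain-surgery argument, now applied inside $(K_-+D_+)\cap(D_-+K_+)$, shows that the induced relation is also single-valued with full domain, and must therefore coincide with the map transported from $V_\reg$.

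The main obstacle I foresee is the $\mul$ and $\ker$ calculations for $R_\reg$: a relation element $(v,w)\in R$ with $v=k_-+k_+$ does not in general decompose along the splitting of $v$, and there is no obvious way to produce separate partners for $k_-$ and $k_+$. The key device is to use the linearity of $R$ to subtract a single-step witness $(k_+,y_1)\in R$ (or $(z_{-1},k_-)\in R$) coming from the finite chains that encode $K_\pm$-membership, thereby extracting an $R$-partner for the complementary summand; this is then spliced with the remaining portion of the finite chain to certify membership in $K_\mp$.
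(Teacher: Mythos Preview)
Your proof is correct and follows essentially the same approach as the paper's. The surjectivity argument for \eqref{E:100} (setting $v'=d_+-k_+=d_--k_-\in D$) is identical, and your ``chain-surgery'' for $\mul(R_\reg)=0$ is the mirror image of the paper's computation for $\ker(R_\reg)=0$: where you subtract $(k_+,y_1)\in R$ from $(v,w)$ to land in $K_-$, the paper subtracts a preimage $(k_-,\tilde k_-)\in R$ to land in $K_+$.

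The only noteworthy difference is the intertwining step. The paper argues directly: given $xR\tilde x$ with $x,\tilde x\in(K_-+D_+)\cap(D_-+K_+)$, it subtracts a forward witness for $k_+$ and a backward witness for $\tilde k_-$ simultaneously to produce $d,\tilde d\in D$ with $dR\tilde d$ and $d\equiv x$, $\tilde d\equiv\tilde x$ modulo $K_-+K_+$. Your indirect route---observing that the right-hand relation is single-valued (by the same chain-surgery, which in fact shows the stronger fact that $K_-+K_+$ is $R$-invariant) and hence must agree with the transported map $\phi R_\reg\phi^{-1}$ once their graphs are nested---is equally valid but requires the reader to note that $\dom(\phi R_\reg\phi^{-1})$ is already full, so the containment of graphs forces equality. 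Both arguments are short; the paper's is slightly more explicit.
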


\begin{proof}
Clearly, \eqref{E:100} is well defined and injective. To see that it is onto let
$$
x=k_-+d_+=d_-+k_+\in(K_-+D_+)\cap(D_-+K_+),
$$
where $k_\pm\in K_\pm$ and $d_\pm\in D_\pm$. Thus
$$
x-k_--k_+=d_+-k_+=d_--k_-\in D_-\cap D_+=D.
$$
We conclude $x\in D+K_-+K_+$, whence \eqref{E:100} is onto. We will next show that this isomorphism intertwines
$R_\reg$ with the relation induced on the right hand side. To do so, suppose $xR\tilde x$ where
\begin{align*}
x&=k_-+d_+=d_-+k_+\in(K_-+D_+)\cap(D_-+K_+),
\\
\tilde x&=\tilde k_-+\tilde d_+=\tilde d_-+\tilde k_+\in(K_-+D_+)\cap(D_-+K_+),
\end{align*}
and $k_\pm,\tilde k_\pm\in K_\pm$ and $d_\pm,\tilde d_\pm\in D_\pm$. Note that there exist $k_+'\in K_+$ and
$\tilde k_-'\in K_-$ such that $k_+Rk_+'$ and $\tilde k_-'R\tilde k_-$. By linearity of $R$ we obtain
$$
\underbrace{(x-k_+-\tilde k_-')}_{\in D_-}R\underbrace{(\tilde x-k_+'-\tilde k_-)}_{\in D_+}.
$$
We conclude $d:=x-k_+-\tilde k_-'\in D$, $\tilde d:=\tilde x-k_+'-\tilde k_-\in D$, and $dR\tilde d$.
This shows that the relations induced on the two quotients in \eqref{E:100} coincide.
We complete the proof by showing that $R_\reg$ is an isomorphism.
Clearly, $\dom(R_\reg)=V_\reg=\img(R_\reg)$. We will next show $\ker(R_\reg)=0$. To this end suppose
$dR\tilde d$, where
$$
d\in D\quad\text{and}\quad\tilde d=\tilde k_-+\tilde k_+\in(K_-+K_+)\cap D
$$
with $\tilde k_\pm\in K_\pm$. Note that $\tilde k_-=\tilde d-\tilde k_+\in K_-\cap D_+$. Thus there exists
$k_-\in K_-\cap D_+$ such that $k_-R\tilde k_-$. By linearity of $R$, we get $(d-k_-)R\tilde k_+$, whence 
$d-k_-\in K_+$ and thus $d\in K_-+K_+$.
This shows $\ker(R_\reg)=0$. Analogously, we have $\mul(R_\reg)=0$. In view of Lemma~\ref{L:1} we conclude that
$R_\reg$ is an isomorphism of modules. 
\end{proof}

We will now specialize to linear relations on finite dimensional vector spaces and provide
another description of $V_\reg$ in this case. Consider the category whose objects are finite 
dimensional vector spaces $V$ equipped with a linear relation $R\colon V\leadsto V$ and whose 
morphisms are linear maps $\psi\colon V\to W$ such that for all $x,y\in V$ with $xRy$ we also have 
$\psi(x)Q\psi(y)$, where $W$ is another finite dimensional vector space with linear relation $Q\colon W\leadsto W$.
It is readily checked that this is an abelian category. By the Remak--Schmidt theorem, every linear
relation on a finite dimensional vector space can therefore be decomposed into a direct sum of
indecomposable ones, $R\cong R_1\oplus\cdots\oplus R_N$, where the factors are unique up to
permutation and isomorphism. The decomposition itself, however, is not canonical.

\begin{proposition}\label{P:C}
Let $R\colon V\leadsto V$ be a linear relation on a finite dimensional vector space over an algebraic closed field , and let
$R\cong R_1\oplus\cdots\oplus R_N$ denote a decomposition into indecomposable linear relations.
Then $R_\reg$ is isomorphic to the direct sum of factors $R_i$ whose relations are linear isomorphisms.
\end{proposition}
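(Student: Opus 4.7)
The plan is to proceed in three steps, reducing the statement to indecomposable summands via additivity, handling the easy case of linear isomorphisms, and finally dealing with indecomposable non-isomorphisms, which is where the real content lies.

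First, I would verify that the regular part construction is additive: if $R = R' \oplus R''$ on $V = V' \oplus V''$, then a direct inspection of the definitions shows that each of the submodules $K_\pm$, $D_\pm$, and $D$ splits as the corresponding direct sum. Consequently $V_\reg \cong V'_\reg \oplus V''_\reg$, and the composed relation defining $R_\reg$ respects this splitting, giving $R_\reg \cong R'_\reg \oplus R''_\reg$. Iterating, $R_\reg \cong \bigoplus_i (R_i)_\reg$, so it suffices to compute $(R_i)_\reg$ for each indecomposable factor.

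Second, if $R_i$ is a linear isomorphism of $V_i$, then by Lemma~\ref{L:1} we have $\dom(R_i) = \img(R_i) = V_i$ and $\ker(R_i) = \mul(R_i) = 0$. Since $R_i$ is the graph of a bijection, any forward or backward sequence $\cdots R_i v_{-1} R_i v_0 R_i v_1 R_i \cdots$ is uniquely determined by $v_0$ and never reaches $0$ unless $v_0 = 0$. Hence $D_\pm = V_i$ while $K_\pm = 0$, so $(V_i)_\reg = V_i$ and $(R_i)_\reg = R_i$. The remaining task is therefore to show that if $R_i$ is indecomposable but not a linear isomorphism, then $(R_i)_\reg = 0$.

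To handle that case, the cleanest route is a Fitting-type decomposition for linear relations: every $R$ on a finite-dimensional $V$ should split in the category of relations as $(V,R) = (V_{\mathrm{iso}}, R_{\mathrm{iso}}) \oplus (V_{\mathrm{sing}}, R_{\mathrm{sing}})$, where $R_{\mathrm{iso}}$ is a linear isomorphism canonically identified with $R_\reg$ and $(R_{\mathrm{sing}})_\reg = 0$. Granting this, indecomposability of $R_i$ forces either $V_i = V_{\mathrm{iso}}$ (contradicting the non-isomorphism hypothesis) or $V_i = V_{\mathrm{sing}}$, so the latter holds and $(R_i)_\reg = 0$. Concretely, I would lift the subquotient isomorphism $V_\reg \cong U/N$ of \eqref{E:100} (with $U = (K_- + D_+) \cap (D_- + K_+)$ and $N = K_- + K_+$) to a subspace $W \subseteq U$ complementary to $N$, verify that the restriction $R \cap (W \oplus W)$ is a linear isomorphism of $W$ intertwined with $R_\reg$, and produce a complement of $W$ in $V$ on which $R$ descends with trivial regular part.

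The main obstacle will be this last construction: finding a complement of $W$ that genuinely splits the relation $R$, since neither $W$ nor its complement is canonical and relations (unlike maps) do not always admit invariant complements for free. If this turns out to be too delicate, an alternative is to bypass the Fitting argument entirely and appeal to Kronecker's classification of matrix pencils, which over an algebraically closed field enumerates the indecomposable linear relations as Jordan blocks $J(\lambda,n)$ for $\lambda \in \kappa \cup \{\infty\}$ together with the singular ``string'' blocks of preprojective and preinjective type. For each non-isomorphism entry on this list, one then checks by direct computation in the standard basis that $D = (K_- + K_+) \cap D$, so $V_\reg = 0$.
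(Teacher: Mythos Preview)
Your reduction via additivity and your treatment of the isomorphism case are exactly what the paper does. For the remaining case (indecomposable but not an isomorphism), the paper takes precisely your \emph{alternative} route rather than your primary one: it observes that an indecomposable relation $R\subseteq V\times V$ is the same thing as an indecomposable representation of the quiver $G_2$ (two vertices, two parallel arrows $R\rightrightarrows V$), and since $R$ is not an isomorphism the representation must be one of the bar code types, for which one checks $V_\reg=0$ directly from the explicit description. This is the Kronecker pencil classification you mention, phrased in quiver language.

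Your Fitting-type splitting is a more intrinsic idea and would give a self-contained proof avoiding the classification, but as you already flag, producing a complement of $W$ that is genuinely $R$-invariant in the relational sense is the nontrivial step, and the paper sidesteps it entirely. So your fallback is the paper's actual argument; your primary approach is a different and somewhat more ambitious route that the paper does not attempt.
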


\begin{proof}
Since the definition of $R_\reg$ is a natural one, we clearly have
$$
R_\reg\cong(R_1)_\reg\oplus\cdots\oplus(R_N)_\reg.
$$
Consequently, it suffices to show the following two assertions:
\begin{enumerate}[(a)]
\item
If $R\colon V\leadsto V$ is an isomorphism of vector spaces, then $V_\reg=V$ and $R_\reg=R$.
\item
If $R\colon V\leadsto V$ is an indecomposable linear relation on a finite dimensional vector space which is not a linear isomorphism, then $V_\reg=0$.
\end{enumerate}
The first statement is obvious, in this case we have $K_-=K_+=0$ and $D=D_-=D_+=V$. 
To see the second assertion, note that an indecomposable linear relation $R\subseteq V\times V$ gives rise to an indecomposable representation $R\genfrac{}{}{0pt}{}{\to}{\to}V$ of the quiver $G_2$.
Since $R$ is not an isomorphism, the quiver representation has to be of the bar code type.
Using the explicit descriptions of the bar code representations, it is straight forward to conclude $V_\reg=0$.
\end{proof}

In the subsequent section we will also make use of the following result:

\begin{proposition}\label{P:X}
Suppose $R\colon V\leadsto V$ is a linear relation on a finite dimensional vector space. Then:
\begin{equation}\label{E:12}
D_+=D+K_+,\quad D_-=K_-+D,\quad\text{and}
\end{equation}
\begin{equation}\label{E:13}
K_-\cap D_+=K_-\cap K_+=D_-\cap K_+.
\end{equation}
\end{proposition}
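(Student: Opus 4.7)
The plan is to reduce both identities to a case analysis on indecomposable linear relations, making use of additivity.

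The first step is to observe that each of the submodules $K_\pm(R)$, $D_\pm(R)$, $D(R)$ is compatible with direct sum decompositions of $R$. Indeed, if $R=R_1\oplus R_2$ acts on $V=V_1\oplus V_2$, then a forward or backward orbit of $R$ in $V$ decomposes coordinate-wise into orbits of $R_1$ and $R_2$, and conversely any pair of such orbits of $R_1$ and $R_2$ may be combined (padding the shorter one with zeros to equalize lengths) into a single orbit of $R$. Consequently $K_\pm(R)=K_\pm(R_1)\oplus K_\pm(R_2)$, and similarly for $D_\pm$ and $D$. Both identities \eqref{E:12} and \eqref{E:13} are therefore additive in $R$.

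The second step is to invoke the Krull--Schmidt theorem in the abelian category of linear relations on finite dimensional vector spaces---precisely the decomposition used in the paragraph before Proposition~\ref{P:C}---and thereby reduce to the case in which $R$ is indecomposable. As recalled in the proof of Proposition~\ref{P:C}, the indecomposable linear relations are of two kinds: linear isomorphisms $V\to V$, and bar code relations (which admit a ``shift on a finite basis'' standard model parametrized by whether each of the two endpoints of the bar code is open or closed). In the isomorphism case $D=D_+=D_-=V$ and $K_+=K_-=0$, so both identities are trivial. In each bar code model direct inspection shows that $D=0$, while each of the pairs $(K_+,D_+)$ and $(K_-,D_-)$ equals either $(0,0)$ or $(V,V)$ depending on the endpoint type; in all four sub-cases \eqref{E:12} reduces to one of the trivial equalities $V=0+V$ or $0=0+0$, and all three intersections in \eqref{E:13} collapse to $0$.

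The main obstacle is carrying out the bar code case analysis without omission. This is halved by the dagger symmetry $R\leftrightarrow R^\dag$, which swaps $K_+\leftrightarrow K_-$ and $D_+\leftrightarrow D_-$: it converts the first equation of \eqref{E:12} into the second, while permuting the three terms of \eqref{E:13} among themselves. Hence it suffices to treat the isomorphism case together with a single representative of each of the symmetry classes of bar codes, which is a finite and essentially mechanical verification.
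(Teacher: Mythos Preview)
Your approach—decomposing into indecomposables via Remak--Schmidt and verifying the identities on each summand—is valid and genuinely different from the paper's argument. The paper does not invoke the classification at all; instead it first proves Lemma~\ref{L:3} directly (a dimension argument showing that $R^k$ induces an isomorphism on $D_+/K_+$, and that $D_\pm=\dom(R^k)$ resp.\ $\img(R^k)$ for large $k$), and then reads off \eqref{E:12} and \eqref{E:13} from that lemma. Your route trades that self-contained lemma for the classification of indecomposable $G_2$ representations already used in Proposition~\ref{P:C}; both are short, but the paper's argument has the advantage of not depending on the classification.

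There is, however, a slip in your case analysis. For the open--open bar code (both endpoints open) one has $K_+=K_-=D_+=D_-=V$, and hence $D=D_-\cap D_+=V$, not $0$; likewise all three intersections in \eqref{E:13} equal $V$, not $0$. The identities \eqref{E:12} and \eqref{E:13} of course still hold in this sub-case (they read $V=V+V$ and $V=V=V$), so your conclusion survives, but the blanket assertions ``$D=0$ in each bar code model'' and ``all three intersections collapse to $0$'' are false as stated. The clean fix is to drop those two claims and argue directly from your (correct) observation that in every bar code case each of $(K_+,D_+)$ and $(K_-,D_-)$ is either $(0,0)$ or $(V,V)$: in all four combinations both \eqref{E:12} and \eqref{E:13} are then tautologies.
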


For the proof we first establish two lemmas.

\begin{lemma}\label{L:2}
Suppose $R\colon V\leadsto W$ is a linear relation between vector spaces such that $\dim V=\dim W<\infty$. 
Then the following are equivalent:
\begin{enumerate}[(a)]
\item
$R$ is an isomorphism.
\item
$\dom(R)=V$ and $\ker(R)=0$.
\item
$\img(R)=W$ and $\mul(R)=0$.
\end{enumerate}
\end{lemma}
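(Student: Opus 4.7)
The plan is to reduce everything to the two natural projections of the subspace $R\subseteq V\times W$ and an application of the rank–nullity theorem, exploiting the equal-dimension hypothesis $\dim V=\dim W<\infty$.

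First, I would set up notation. Let $p_V\colon R\to V$ and $p_W\colon R\to W$ denote the restrictions to $R$ of the two projections of $V\times W$. A direct unwinding of definitions gives
\begin{equation*}
\img(p_V)=\dom(R),\qquad \ker(p_V)=\{0\}\times\mul(R),
\end{equation*}
\begin{equation*}
\img(p_W)=\img(R),\qquad \ker(p_W)=\ker(R)\times\{0\}.
\end{equation*}
Applying rank–nullity to $p_V$ and to $p_W$ then yields the two identities
\begin{equation*}
\dim R=\dim\dom(R)+\dim\mul(R)=\dim\img(R)+\dim\ker(R).
\end{equation*}

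Next I would observe that Lemma~\ref{L:1} already gives (a)$\Rightarrow$(b) and (a)$\Rightarrow$(c), so only (b)$\Rightarrow$(a) and (c)$\Rightarrow$(a) require work, and by the symmetry $R\mapsto R^\dag$ (which swaps the roles of $\dom/\img$ and of $\ker/\mul$, and under which $\dim V=\dim W$ is preserved) it suffices to handle (b)$\Rightarrow$(a). Assume $\dom(R)=V$ and $\ker(R)=0$. From the first identity, $\dim R=\dim V$; from the second, $\dim R=\dim\img(R)$. Since $\dim V=\dim W$, this forces $\dim\img(R)=\dim W$, hence $\img(R)=W$. Plugging $\dim R=\dim V$ back into the first identity gives $\dim\mul(R)=0$, so $\mul(R)=0$. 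All four conditions of Lemma~\ref{L:1}(b) now hold, so $R$ is an isomorphism.

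The proof is essentially bookkeeping once the two projections are introduced; the only thing that genuinely uses finite dimensionality is the rank–nullity count, and the only thing that uses $\dim V=\dim W$ is the final inequality $\dim\img(R)\leq\dim W=\dim V=\dim R$. I do not foresee any real obstacle — the main point is simply to recognise that $R$ itself is the graph-like subspace whose two projections encode all four invariants $\dom,\img,\ker,\mul$ simultaneously.
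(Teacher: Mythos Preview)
Your approach is exactly the paper's: establish the dimension formula $\dim\dom(R)+\dim\mul(R)=\dim R=\dim\img(R)+\dim\ker(R)$ (which you derive via rank--nullity on the two projections, while the paper just states it) and combine with Lemma~\ref{L:1}. One write-up slip in your middle paragraph: from the first identity together with $\dom(R)=V$ you only get $\dim R=\dim V+\dim\mul(R)\geq\dim V$, not $\dim R=\dim V$ outright; the correct order is to use the second identity with $\ker(R)=0$ to get $\dim R=\dim\img(R)\leq\dim W=\dim V$ first, then combine the two inequalities --- exactly the inequality you mention in your final paragraph.
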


\begin{proof}
This follows immediately from the dimension formula
$$
\dim\dom(R)+\dim\mul(R)=\dim(R)=\dim\img(R)+\dim\ker(R)
$$
and Lemma~\ref{L:1}.
\end{proof}

\begin{lemma}\label{L:3}
If $V$ is finite dimensional, then the composition of relations
$$
D_+/K_+\overset{\pi^\dag}\leadsto D_+\overset\iota\leadsto V\overset{R^k}\leadsto V\overset{\iota^\dag}\leadsto D_+\overset\pi\leadsto D_+/K_+,
$$
is a linear isomorphism, for every $k\geq0$, where $\iota$ and $\pi$ denote the canonical inclusion and projection, respectively.
Analogously, the relation induced by $R^k$ on $D_-/K_-$ is an isomorphism, for all $k\geq0$. Moreover, for sufficiently large $k$,
$$
D_-=\img(R^k)\quad\text{and}\quad D_+=\dom(R^k).
$$
\end{lemma}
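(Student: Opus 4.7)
\textbf{Proof plan for Lemma~\ref{L:3}.}

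The plan is to first handle the isomorphism assertion by reducing, via Lemma~\ref{L:2}, to checking full domain and trivial kernel, and then to handle the ``moreover'' claim via a standard descending-chain stabilization argument in the finite dimensional vector space $V$.

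For the first assertion, write $\bar R^k \colon D_+/K_+ \leadsto D_+/K_+$ for the relation defined by the indicated composition. Unwinding the definitions, $[x]\,\bar R^k\,[y]$ holds iff there exist representatives $x' \in x + K_+$ and $y' \in y + K_+$, both in $D_+$ (automatic, since $K_+ \subseteq D_+$), such that $x' R^k y'$. The case $k=0$ gives $\bar R^0 = \id_{D_+/K_+}$, so assume $k \geq 1$. Since $D_+/K_+$ is finite dimensional, Lemma~\ref{L:2} reduces the task to verifying $\dom(\bar R^k) = D_+/K_+$ and $\ker(\bar R^k) = 0$. For the domain, given $[x]$ with $x\in D_+$, the definition of $D_+$ furnishes an infinite chain $x R x_1 R x_2 R \cdots$; then $y := x_k$ admits the tail $x_k R x_{k+1} R \cdots$ so $y \in D_+$, and by construction $x R^k y$. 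For the kernel, suppose $[x]\,\bar R^k\,[0]$ with $x' \in [x] \cap D_+$ and $y' \in K_+$ satisfying $x' R^k y'$; splicing the finite chain $y' R w_1 R \cdots R w_\ell R 0$ witnessing $y' \in K_+$ onto the chain of length $k$ from $x'$ to $y'$ yields a finite chain from $x'$ to $0$, so $x' \in K_+$ and $[x] = 0$. Exactly the same argument applied to $R^\dag$, whose roles of $D_\pm,K_\pm$ are swapped, proves the statement for $D_-/K_-$.

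For the ``moreover'' claim, set $A_k := \dom(R^k)$. Each $A_k$ is a subspace of $V$ (since $R^k$ is a linear relation) and $V = A_0 \supseteq A_1 \supseteq A_2 \supseteq \cdots$. Because $V$ is finite dimensional, this chain stabilizes: there is an $N$ with $A_k = A_N$ for all $k \geq N$. I would then claim $A_N = D_+$. One inclusion is immediate: every $v \in D_+$ is by definition the starting point of an infinite forward chain, hence lies in every $A_k$. Conversely, if $v \in A_N = A_{N+1}$, there exist $v_1, \ldots, v_{N+1}$ with $v R v_1 R \cdots R v_{N+1}$; the tail chain exhibits $v_1 \in A_N$, so inductively one builds an infinite chain $v = v_0 R v_1 R v_2 R \cdots$ with every $v_i \in A_N$, proving $v \in D_+$. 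The same argument applied to $R^\dag$ gives $\img(R^k) = D_-$ for large $k$.

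The only conceptually delicate point---the ``main obstacle'' in any honest sense---is the last step: knowing $v$ has chains of every finite length is \emph{a priori} weaker than knowing $v$ has one infinite chain, and this implication relies essentially on finite dimensionality of $V$ to force $A_{N+1} = A_N$ and thereby allow one to pick continuations \emph{inside} the stable subspace. Everything else is formal manipulation of the definitions and an appeal to Lemma~\ref{L:2}.
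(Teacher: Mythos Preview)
Your proof is correct and follows essentially the same approach as the paper's own proof: you reduce the isomorphism claim to checking full domain and trivial kernel via Lemma~\ref{L:2}, and you establish the ``moreover'' statement by the descending-chain stabilization argument $\dom(R^k)\supseteq\dom(R^{k+1})$ followed by the inductive construction of an infinite chain inside the stable subspace. The paper merely asserts that the domain and kernel conditions are ``readily verified,'' whereas you spell them out, but the structure of the argument is identical.
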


\begin{proof}
One readily verifies $\dom(\pi\iota^\dag R^k\iota\pi^\dag)=D_+/K_+$ and $\ker(\pi\iota^\dag R^k\iota\pi^\dag)=0$.
The first assertion thus follows from Lemma~\ref{L:2} above. Considering $R^\dag$ we obtain the second statement.
Clearly, $\dom(R^k)\supseteq\dom(R^{k+1})$, for all $k\geq0$.
Since $V$ is finite dimensional, we must have $\dom(R^k)=\dom(R^{k+1})$, for sufficiently large $k$.
Given $v\in\dom(R^k)$, we thus find $v_1\in\dom(R^k)$ such that $vRv_1$. Proceeding inductively, we construct $v_i\in\img(R^k)$ such that
$vRv_1Rv_2R\cdots$, whence $v\in D_+$. This shows $\dom(R^k)\subseteq D_+$, for sufficiently large $k$. As the converse inclusion is obvious
we get $D_+=\dom(R^k)$. Considering $R^\dag$, we obtain the last statement.
\end{proof}

\begin{proof}[Proof of Proposition~\ref{P:X}]
From Lemma~\ref{L:3} we get $\img(\pi\iota^\dag R^k)=D_+/K_+$, whence
$D_+\subseteq\img(R^k)+K_+$, for every $k\geq0$, and thus $D_+\subseteq D_-+K_+$. This implies
$D_+=D+K_+$. Considering $R^\dag$ we obtain the other equality in \eqref{E:12}.
From Lemma~\ref{L:3} we also get $\mul(\pi\iota^\dag R^k)=0$, whence
$\mul(R^k)\cap D_+\subseteq K_+$, for every $k\geq0$. This gives 
$K_-\cap D_+=K_-\cap K_+$. Considering $R^\dag$ we get the other equality in \eqref{E:13}.
\end{proof}

\subsection{Monodromy}\label{SS:mon}

Suppose $f: X\to S^1$ is a continuous map and let
$$
\xymatrix{
\tilde X\ar[d]\ar[r]^-{\tilde f}&\R\ar[d]\\X\ar[r]^-f&S^1
}
$$
denote the associated infinite cyclic covering. For $r\in\R$ we put $\tilde X_r=\tilde f^{-1}(r)$ and let $H_*(\tilde X_r)$ denote
its singular homology with coefficients in any fixed module. If $r_1\leq r_2$ we define a linear relation 
$$
B_{r_1}^{r_2}\colon H_*(\tilde X
_{r_1})\leadsto H_*(\tilde X
_{r_2})
$$ 
by declaring $a_1\in H_*(\tilde X
_{r_1})$ to be in relation with $a_2\in H_*(\tilde X
_{r_2})$ iff
their images in $H_*(\tilde X_{[r_1,r_2]})$ coincide, where $\tilde X_{[r_1,r_2]}=f^{-1}([r_1,r_2])$. 
If $r_1\leq r_2\leq r_3$ we clearly have $B_{r_2}^{r_3}B_{r_1}^{r_2}\subseteq B_{r_1}^{r_3}$. If $r_2$ 
is a tame value this becomes an equality of relations:

\begin{lemma}
Suppose $r_1\leq r_2\leq r_3$ and assume $r_2$ is a tame value. 
Then, as linear relations, $B_{r_2}^{r_3}B_{r_1}^{r_2}=B_{r_1}^{r_3}$.
\end{lemma}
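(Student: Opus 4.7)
The plan is to treat the two inclusions separately. The inclusion $B_{r_2}^{r_3}B_{r_1}^{r_2}\subseteq B_{r_1}^{r_3}$ is noted already in the text and follows purely from functoriality: if $a_1$ and $a_2$ have the same image in $H_*(\tilde X_{[r_1,r_2]})$ and $a_2$ and $a_3$ have the same image in $H_*(\tilde X_{[r_2,r_3]})$, then pushing everything forward through the inclusions $\tilde X_{[r_1,r_2]}\hookrightarrow \tilde X_{[r_1,r_3]}$ and $\tilde X_{[r_2,r_3]}\hookrightarrow \tilde X_{[r_1,r_3]}$ yields a common image in $H_*(\tilde X_{[r_1,r_3]})$, so $a_1 B_{r_1}^{r_3} a_3$. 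So the real work is the reverse containment $B_{r_1}^{r_3}\subseteq B_{r_2}^{r_3}B_{r_1}^{r_2}$.

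For the reverse containment I would use Mayer--Vietoris for the decomposition
\[
\tilde X_{[r_1,r_3]}=\tilde X_{[r_1,r_2]}\cup\tilde X_{[r_2,r_3]},\qquad \tilde X_{[r_1,r_2]}\cap\tilde X_{[r_2,r_3]}=\tilde X_{r_2}.
\]
Assume $a_1 B_{r_1}^{r_3} a_3$ and let $b_1\in H_*(\tilde X_{[r_1,r_2]})$, $b_3\in H_*(\tilde X_{[r_2,r_3]})$ be the images of $a_1,a_3$ under the respective inclusions. By hypothesis $b_1$ and $b_3$ have the same image in $H_*(\tilde X_{[r_1,r_3]})$, so $(b_1,b_3)$ lies in the kernel of the Mayer--Vietoris map $H_*(\tilde X_{[r_1,r_2]})\oplus H_*(\tilde X_{[r_2,r_3]})\to H_*(\tilde X_{[r_1,r_3]})$. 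Exactness of the sequence produces $a_2\in H_*(\tilde X_{r_2})$ whose images in $H_*(\tilde X_{[r_1,r_2]})$ and $H_*(\tilde X_{[r_2,r_3]})$ are exactly $b_1$ and $b_3$ respectively, i.e.\ $a_1 B_{r_1}^{r_2} a_2$ and $a_2 B_{r_2}^{r_3} a_3$.

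The only subtle point, and the main obstacle, is justifying that Mayer--Vietoris actually applies to this triad. The two pieces $\tilde X_{[r_1,r_2]}$ and $\tilde X_{[r_2,r_3]}$ are closed subsets of $\tilde X_{[r_1,r_3]}$, not open, so one needs to thicken them to an excisive couple without changing the homotopy types involved. This is exactly where the tameness assumption on $r_2$ is used: by definition $\tilde X_{r_2}$ is a deformation retract of an open neighborhood $W$ in $\tilde X$, and by choosing $W$ small enough (say contained in $\tilde f^{-1}(r_2-\varepsilon,r_2+\varepsilon)$) the open sets
\[
U:=\tilde X_{[r_1,r_2]}\cup W,\qquad V:=\tilde X_{[r_2,r_3]}\cup W
\]
satisfy $U\cup V=\tilde X_{[r_1,r_3]}$, and the inclusions $\tilde X_{[r_1,r_2]}\hookrightarrow U$, $\tilde X_{[r_2,r_3]}\hookrightarrow V$, $\tilde X_{r_2}\hookrightarrow U\cap V$ are homotopy equivalences. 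Applying the Mayer--Vietoris sequence to the open cover $\{U,V\}$ and transporting it along these homotopy equivalences yields the exact sequence used above, completing the argument. Tameness of $r_1$ and $r_3$ is not needed.
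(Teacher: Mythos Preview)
Your proof is correct and follows the same strategy as the paper: the paper simply invokes the exact Mayer--Vietoris sequence
\[
H_*(\tilde X_{r_2})\to H_*(\tilde X_{[r_1,r_2]})\oplus H_*(\tilde X_{[r_2,r_3]})\to H_*(\tilde X_{[r_1,r_3]})
\]
and says it ``immediately implies the statement''. You have spelled out both halves of that implication explicitly, and in addition supplied the justification (via thickening with the tame neighborhood of $\tilde X_{r_2}$) for why Mayer--Vietoris applies to this closed cover, which the paper leaves implicit.
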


\begin{proof}
Since $r_2$ is a tame value, we have an exact Mayer--Vietoris sequence,
$$
H_*(\tilde X
_{r_2})\to H_*(\tilde X_{[r_1,r_2]})\oplus H_*(\tilde X_{[r_2,r_3]})\to H_*(\tilde X_{[r_1,r_3]}),
$$
which immediately implies the statement.
\end{proof}

Fix a tame value $\theta\in S^1$ of $f$ and a lift $\tilde\theta\in\R$, $e^{\mathbf i\tilde\theta}=\theta$.
Using the projection $\tilde X\to X$, we may canonically identify $\tilde X_{\tilde\theta}=X_\theta=f^{-1}(\theta)$.
Moreover, let $\tau\colon\tilde X\to\tilde X$ denote the fundamental deck transformation,  i.e.\ $\tilde f\circ\tau=\tilde f+2\pi$. 
Note that $\tau$ induces homeomorphisms between levels, $\tau\colon\tilde X_r\to\tilde X_{r+2\pi}$, and define a linear relation 
$$
R\colon H_*(X_\theta)\leadsto H_*(X_\theta)
$$ 
as the composition
\begin{equation}\label{E:RSigma}
H_*(X_\theta)=H_*(\tilde X_{\tilde\theta})\overset{B_{\tilde\theta}^{\tilde\theta+2\pi}}\leadsto H_*(\tilde X_{\tilde\theta+2\pi})\overset{\tau_*^\dag}
\leadsto H_*(\tilde X_{\tilde\theta})=H_*(X_\theta).
\end{equation}
In other words, for $a,b\in H_*(X_\theta)$ we have $aRb$ iff $aB_{\tilde\theta}^{\tilde\theta+2\pi}(\tau_*b)$,
i.e.\ iff $a$ and $\tau_*b$ coincide in $H_*(\tilde X_{[\tilde\theta,\tilde\theta+2\pi]})$. Particularly:

\begin{lemma}\label{L:4}
If $a,b\in H_*(X_\theta)$ and $aRb$, then $a=\tau_*b$ in $H_*(\tilde X)$.
\end{lemma}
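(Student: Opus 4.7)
\medskip

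The plan is to unwind the definitions and then apply functoriality of singular homology with respect to inclusions. By construction, $R$ is the composition $\tau_*^\dag \circ B_{\tilde\theta}^{\tilde\theta+2\pi}$, so $aRb$ unpacks to the assertion that $a \in H_*(\tilde X_{\tilde\theta})$ and $\tau_*b \in H_*(\tilde X_{\tilde\theta+2\pi})$ are related by $B_{\tilde\theta}^{\tilde\theta+2\pi}$; and by the very definition of the latter relation, this means that $a$ and $\tau_*b$ have the same image in $H_*(\tilde X_{[\tilde\theta,\tilde\theta+2\pi]})$ under the maps induced by the inclusions $\tilde X_{\tilde\theta} \hookrightarrow \tilde X_{[\tilde\theta,\tilde\theta+2\pi]}$ and $\tilde X_{\tilde\theta+2\pi} \hookrightarrow \tilde X_{[\tilde\theta,\tilde\theta+2\pi]}$.

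Next I would observe that both of these inclusions factor through $\tilde X$ via the commutative diagram of inclusions
\[
\xymatrix{
\tilde X_{\tilde\theta} \ar[r]\ar[dr] & \tilde X_{[\tilde\theta,\tilde\theta+2\pi]}\ar[d] & \tilde X_{\tilde\theta+2\pi}\ar[l]\ar[dl] \\
& \tilde X &
}
\]
so pushing forward under $\tilde X_{[\tilde\theta,\tilde\theta+2\pi]} \hookrightarrow \tilde X$ yields the equality of $a$ and $\tau_*b$ in $H_*(\tilde X)$, which is what was to be shown.

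There is no serious obstacle: the argument is entirely formal, relying only on functoriality of $H_*$ and the explicit definition of the relations $B_{r_1}^{r_2}$ and $R$. No use of tameness or of the Mayer--Vietoris sequence (which was needed for the previous lemma on composability of the $B$-relations) is required here. The only point worth emphasizing is that we identify $H_*(X_\theta)$ with $H_*(\tilde X_{\tilde\theta})$ via the homeomorphism induced by the covering, so that the images of $a$ in $H_*(\tilde X)$ via $X_\theta \hookrightarrow X$ and via $\tilde X_{\tilde\theta} \hookrightarrow \tilde X$ are consistently interpreted.
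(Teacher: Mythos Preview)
Your proof is correct and follows exactly the approach the paper intends: the lemma is stated immediately after the sentence ``$aRb$ iff $a$ and $\tau_*b$ coincide in $H_*(\tilde X_{[\tilde\theta,\tilde\theta+2\pi]})$'' with the word ``Particularly,'' and no separate proof is given --- it is meant to be the trivial observation that equality in $H_*(\tilde X_{[\tilde\theta,\tilde\theta+2\pi]})$ pushes forward to equality in $H_*(\tilde X)$, which is precisely what you spell out. One small quibble: in your closing remark the phrase ``via $X_\theta\hookrightarrow X$'' is a slip (that inclusion lands in $H_*(X)$, not $H_*(\tilde X)$); the relevant identification is just $X_\theta\cong\tilde X_{\tilde\theta}$ followed by $\tilde X_{\tilde\theta}\hookrightarrow\tilde X$.
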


We will continue to use the notation $K_\pm$, $D_\pm$, and $R_\reg$ introduced in the previous section for this relation $R$ on $H_*(X_\theta)$.
Particularly, its regular part,  
$$
R_\reg\colon H_*(X_\theta)_\reg\to H_*(X_\theta)_\reg,
$$
is a module automorphism.

\begin{lemma}\label{L:5}
We have:
\begin{align*}
K_+&=\ker\bigl(H_*(X_\theta)\to H_*(\tilde X_{[\tilde\theta,\infty)})\bigr)
\\
K_-&=\ker\bigl(H_*(X_\theta)\to H_*(\tilde X_{(-\infty,\tilde\theta]})\bigr)
\end{align*}
Both maps are induced by the canonical inclusion $X_\theta=\tilde X_{\tilde\theta}\to\tilde X$.
\end{lemma}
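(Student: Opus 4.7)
The plan is to prove the first equality $K_+=\ker\bigl(H_*(X_\theta)\to H_*(\tilde X_{[\tilde\theta,\infty)})\bigr)$ by establishing both inclusions; the identity for $K_-$ follows by an entirely symmetric argument, using the interval $(-\infty,\tilde\theta]$, the inverse deck transformation $\tau^{-1}$, and the observation that $K_-(R)=K_+(R^\dag)$. Throughout, abbreviate $W_k:=\tilde X_{[\tilde\theta,\tilde\theta+2\pi k]}$, so that $\tilde X_{[\tilde\theta,\infty)}=\bigcup_{k\ge 0}W_k$ and singular homology yields $H_*(\tilde X_{[\tilde\theta,\infty)})=\varinjlim_k H_*(W_k)$.

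For the inclusion $K_+\subseteq\ker$, start from a chain $vRv_1R\cdots Rv_kR0$. By the definition \eqref{E:RSigma} of $R$, each step $v_iRv_{i+1}$ asserts that $v_i$ and $\tau_*v_{i+1}$ have equal image in $H_*(W_1)$. Applying $\tau_*^i$ and using that $\tau^i$ maps $W_1$ homeomorphically onto $\tilde X_{[\tilde\theta+2\pi i,\tilde\theta+2\pi(i+1)]}$, we obtain $\tau_*^iv_i=\tau_*^{i+1}v_{i+1}$ in $H_*(\tilde X_{[\tilde\theta+2\pi i,\tilde\theta+2\pi(i+1)]})$. Including all of these equalities into the common group $H_*(W_{k+1})$ and telescoping gives $v=\tau_*^k v_k$ in $H_*(W_{k+1})$. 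The terminal relation $v_kR0$ forces $v_k=0$ already in $H_*(W_1)$, so $\tau_*^kv_k=0$ in $H_*(W_{k+1})$, and therefore $v$ dies in $H_*(W_{k+1})$, hence in $H_*(\tilde X_{[\tilde\theta,\infty)})$.

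For the reverse inclusion, assume $v$ maps to $0$ in $H_*(\tilde X_{[\tilde\theta,\infty)})$. Since homology commutes with the filtered colimit over $k$, there is some $k$ with $v=0$ in $H_*(W_k)$; equivalently $vB_{\tilde\theta}^{\tilde\theta+2\pi k}0$. Tameness of $\theta$ passes by $\tau$ to tameness of every $\tilde\theta+2\pi j$, so the preceding lemma applied iteratively gives the equality of relations
\[
B_{\tilde\theta}^{\tilde\theta+2\pi k}=B_{\tilde\theta+2\pi(k-1)}^{\tilde\theta+2\pi k}\circ\cdots\circ B_{\tilde\theta+2\pi}^{\tilde\theta+4\pi}\circ B_{\tilde\theta}^{\tilde\theta+2\pi}.
\]
Unwinding the composition produces intermediate elements $w_j\in H_*(\tilde X_{\tilde\theta+2\pi j})$, $j=1,\dots,k-1$, with $vB_{\tilde\theta}^{\tilde\theta+2\pi}w_1$, $w_jB_{\tilde\theta+2\pi j}^{\tilde\theta+2\pi(j+1)}w_{j+1}$, and $w_{k-1}B_{\tilde\theta+2\pi(k-1)}^{\tilde\theta+2\pi k}0$. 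Set $v_j:=\tau_*^{-j}w_j\in H_*(X_\theta)$. Equivariance of $\tau$ translates each $B$-relation into the statement that the corresponding representatives coincide in $H_*(W_1)$ after a single $\tau$-shift, which is precisely the defining condition for $R$. Hence $vRv_1Rv_2\cdots Rv_{k-1}R0$, so $v\in K_+$.

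The only real technical point, and the main obstacle, is the combination of two bookkeeping tasks: identifying $H_*(\tilde X_{[\tilde\theta,\infty)})$ with $\varinjlim H_*(W_k)$ (needed for the reverse inclusion) and consistently translating iterated $B$-relations on the shifted levels $\tilde X_{\tilde\theta+2\pi j}$ into $R$-relations on the fixed fibre $H_*(X_\theta)$ via powers of $\tau_*$. Once the deck-transformation bookkeeping is handled cleanly, both implications are direct consequences of the definitions and of the Mayer--Vietoris composition identity recorded in the preceding lemma.
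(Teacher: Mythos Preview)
Your argument is correct. The forward inclusion $K_+\subseteq\ker$ is essentially identical to the paper's. For the reverse inclusion the paper takes a slightly different route: it applies a single Mayer--Vietoris argument to the decomposition of $\tilde X_{[\tilde\theta,\infty)}$ into the union of the even-indexed strips $\bigsqcup_{k\ \text{even}}\tilde X_{[\tilde\theta+2\pi k,\tilde\theta+2\pi(k+1)]}$ and the odd-indexed strips, whose intersection is $\bigsqcup_{k\in\mathbb N}\tilde X_{\tilde\theta+2\pi k}$; the MV exact sequence then directly produces the finitely-supported family $b_k\in H_*(\tilde X_{\tilde\theta+2\pi k})$ which, after pulling back by $\tau^{-k}$ and adjusting signs, yields the $R$-chain. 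Your version instead first passes to a finite stage via the colimit $H_*(\tilde X_{[\tilde\theta,\infty)})=\varinjlim_k H_*(W_k)$ and then iterates the composition identity $B_{r_2}^{r_3}B_{r_1}^{r_2}=B_{r_1}^{r_3}$ from the preceding lemma. This is a cleaner and more modular packaging: it reuses the lemma already on record rather than introducing a new global MV decomposition, at the cost of invoking the colimit description of singular homology. The two arguments produce the same intermediate classes and the same $\tau$-translation step; they differ only in how the half-line is cut up.
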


\begin{proof}
We will only show the first equality, the other one can be proved along the same lines. To see the inclusion
$K_+\subseteq\ker(H_*(X_\theta)\to H_*(\tilde X_{[\tilde\theta,\infty)}))$, let $a\in K_+$. Hence, there exist $a_k\in H_*(X_\theta)$,
almost all of which vanish, such that $aRa_1Ra_2R\cdots$. In $H_*(\tilde X_{[\tilde\theta,\tilde\theta+2\pi]})$, we thus have:
$$
a=\tau_*a_1,\quad a_1=\tau_*a_2,\quad a_2=\tau_*a_3,\quad\dotsc 
$$
In $H_*(\tilde X_{[\tilde\theta,\infty)})$, we obtain:
$$
a=\tau_*a_1=\tau_*^2a_2=\tau_*^3a_3=\cdots
$$
Since some $a_k$ have to be zero, we conclude that $a$ vanishes in $H_*(\tilde X_{[\tilde\theta,\infty)})$.

To see the converse inclusion, $K_+\supseteq\ker(H_*(\tilde X_\theta)\to H_*(\tilde X_{[\tilde\theta,\infty)}))$, set 
$$
U:=\bigsqcup_{\text{$0\leq k$ even}}\tilde X_{[\tilde\theta+2\pi k,\tilde\theta+2\pi(k+1)]},\qquad
V:=\bigsqcup_{\text{$1\leq k$ odd}}\tilde X_{[\tilde\theta+2\pi k,\tilde\theta+2\pi(k+1)]}
$$
and note that $U\cup V=\tilde X_{[\tilde\theta,\infty)}$, as well as $U\cap V=\bigsqcup_{k\in\mathbb N}\tilde X_{\tilde\theta+2\pi k}$. 
Since $\theta$ is a tame value, we have an exact Mayer--Vietoris sequence
$$
\bigoplus_{k\in\mathbb N}H_*(\tilde X_{\tilde\theta+2\pi k})=H_*\Bigl(\bigsqcup_{k\in\mathbb N}\tilde X_{\tilde\theta+2\pi k}\Bigr)\to H_*(U)\oplus H_*(V)\to H_*(\tilde X_{[\tilde\theta,\infty)}).
$$
For $b\in\ker(H_*(X_\theta)\to H_*(\tilde X_{[\tilde\theta,\infty)}))$ we thus find $b_k\in H_*(\tilde X_{\tilde\theta+2\pi k})$, almost all of which vanish, such that:
$$
b=b_1\in H_*(\tilde X_{[\tilde\theta,\tilde\theta+2\pi]}),\quad
b_1+b_2=0\in H_*(\tilde X_{[\tilde\theta+2\pi,\tilde\theta+4\pi]}),\quad
b_2+b_3=0\in H_*(\tilde X_{[\tilde\theta+4\pi,\tilde\theta+6\pi]}),\quad\dotsc
$$
Putting $c_k:=(-1)^{k-1}\tau^{-k}_*b_k\in H_*(\tilde X_{\tilde\theta})$, we obtain the following equalities in $H_*(\tilde X_{[\tilde\theta,\tilde\theta+2\pi]})$:
$$
b=\tau_*c_1,\quad c_1=\tau_*c_2,\quad c_2=\tau_*c_3,\quad\dotsc
$$
In other words, we have the relations $bRc_1Rc_2Rc_3R\cdots$. Since some $c_k$ has to be zero, we conclude $b\in K_+$, whence the lemma.
\end{proof}

Introduce the upwards Novikov complex as a projective limit of relative singular chain complexes,
$$
C_*^{\Nov,+}(\tilde X):=\varprojlim_{r}C_*(\tilde X,\tilde X_{[r,\infty)}),
$$
and let $H_*^{\Nov,+}(\tilde X)$ denote its homology. Analogously, we define a downwards Novikov complex
$C_*^{\Nov,-}(\tilde X)=\varprojlim_r C_*(\tilde X,\tilde X_{(-\infty,r]})$ and the corresponding homology,
$H_*^{\Nov,-}(\tilde X)$. We will also use similar notation for subsets of $\tilde X$.

\begin{lemma}\label{L:6}
We have:
\begin{align*}
D_+&=\ker\bigl(H_*(X_\theta)\to H_*^{\Nov,+}(\tilde X_{[\tilde\theta,\infty)})\bigr)
\\
D_-&=\ker\bigl(H_*(X_\theta)\to H_*^{\Nov,-}(\tilde X_{(-\infty,\tilde\theta]})\bigr)
\end{align*}
Both maps are induced by the canonical inclusion $X_\theta=\tilde X_{\tilde\theta}\to\tilde X$.
\end{lemma}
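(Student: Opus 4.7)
The plan is to mimic the proof of Lemma~\ref{L:5} at the chain level, working with the Novikov chain complex $C_*^{\Nov,+}(\tX_{[\tilde\theta,\infty)}) := \varprojlim_{r\geq\tilde\theta} C_*(\tX_{[\tilde\theta,\infty)},\tX_{[r,\infty)})$ rather than with ordinary relative chains. By the symmetry $R\leftrightarrow R^{\dag}$, which interchanges the roles of $+$ and $-$, it suffices to establish the first equality; the second follows by the same argument with all arrows reversed.

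For $D_+\subseteq\ker\bigl(H_*(X_\theta)\to H_*^{\Nov,+}(\tX_{[\tilde\theta,\infty)})\bigr)$: given $a\in D_+$, choose $b_i\in H_*(X_\theta)$ with $a R b_1 R b_2 R\cdots$, lift each $b_i$ to a cycle $\sigma_i\in C_*(X_\theta)$ with $\sigma_0=\sigma$ representing $a$, and (using Lemma~\ref{L:4}) pick chains $c_k\in C_{*+1}(\tX_{[\tilde\theta,\tilde\theta+2\pi]})$ with $\partial c_k = \sigma_k - \tau_\#\sigma_{k+1}$. The translates $D_k:=\tau_\#^k c_k$ lie in the successive bands $\tX_{[\tilde\theta+2\pi k,\tilde\theta+2\pi(k+1)]}$, so the formal sum $C:=\sum_{k\geq 0} D_k$ is locally finite in $\tX_{[\tilde\theta,\infty)}$ and defines an element of $C_{*+1}^{\Nov,+}(\tX_{[\tilde\theta,\infty)})$. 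A telescoping identity $\partial\bigl(\sum_{k=0}^{N-1}D_k\bigr)=\sigma-\tau_\#^N\sigma_N$ shows $\sigma-\partial C\in C_*(\tX_{[\tilde\theta+2\pi N,\infty)})$ for every $N$, i.e.\ $\partial C=\sigma$ in the Novikov complex, and hence $[\sigma]=0$ in $H_*^{\Nov,+}(\tX_{[\tilde\theta,\infty)})$.

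For the reverse inclusion, let $\sigma$ represent $a$ and assume $[\sigma]=0$ in $H_*^{\Nov,+}(\tX_{[\tilde\theta,\infty)})$, so there is a Novikov chain $C$ with $\partial C=\sigma$. Using tameness of every level $\tX_{\tilde\theta+2\pi k}$ (each a $\tau^k$-image of the tame fiber $X_\theta$), we first replace $C$ by a chain-homotopic representative admitting a decomposition $C=\sum_{k\geq 0} D_k$ with $D_k\in C_{*+1}(\tX_{[\tilde\theta+2\pi k,\tilde\theta+2\pi(k+1)]})$. The relation $\partial C\equiv\sigma\pmod{C_*(\tX_{[\tilde\theta+2\pi N,\infty)})}$ then forces $\sigma_N:=\sigma-\sum_{k=0}^{N-1}\partial D_k$ to be a cycle supported in the single level $\tX_{\tilde\theta+2\pi N}$, and $\sigma_N-\sigma_{N+1}=\partial D_N$ gives $[\sigma_N]=[\sigma_{N+1}]$ in $H_*(\tX_{[\tilde\theta+2\pi N,\tilde\theta+2\pi(N+1)]})$. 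Setting $b_N:=[\tau_*^{-N}\sigma_N]\in H_*(X_\theta)$, one reads off $b_0=a$ and $\tau_*b_{N+1}=b_N$ in $H_*(\tX_{[\tilde\theta,\tilde\theta+2\pi]})$, i.e.\ $b_N R b_{N+1}$, witnessing $a\in D_+$.

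The principal technical obstacle is the band decomposition $C=\sum_k D_k$ used in the reverse direction: a priori a Novikov chain is only a compatible inverse system of relative singular chains, and replacing it by a locally finite sum of chains supported in individual bands is not automatic for singular theory. The cleanest remedy is to pass, via barycentric subdivision adapted to the tame levels $\tX_{\tilde\theta+2\pi k}$, to a chain-homotopy equivalent subcomplex of $C_*^{\Nov,+}$ whose chains are already in band-filtered form; alternatively one may apply the short exact Mayer--Vietoris sequence for the partition used in Lemma~\ref{L:5} at each finite truncation level and take the projective limit, controlling the resulting $\varprojlim^1$ term. Once this decomposition is in hand, everything reduces to the telescoping bookkeeping described above, entirely parallel to the Mayer--Vietoris argument used for $K_\pm$ in Lemma~\ref{L:5}.
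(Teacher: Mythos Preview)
Your forward inclusion is correct: the telescoping construction gives a genuine element of $C_{*+1}^{\Nov,+}$ bounding $\sigma$. The reverse inclusion, however, rests on the band decomposition $C=\sum_k D_k$ of an arbitrary Novikov chain, which you yourself flag as the ``principal technical obstacle''; your proposed remedies (subdivision, or Mayer--Vietoris at finite stages with $\varprojlim^1$ control) are only sketched, not carried out. So the argument is incomplete precisely where you say it is.

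The paper avoids this difficulty by never leaving the homology level. It invokes a Mayer--Vietoris sequence for Novikov homology with respect to the same decomposition $U\cup V=\tX_{[\tilde\theta,\infty)}$ used in the proof of Lemma~\ref{L:5}, the single change being that now
\[
H_*^{\Nov,+}\Bigl(\bigsqcup_{k\in\N}\tX_{\tilde\theta+2\pi k}\Bigr)=\prod_{k\in\N}H_*(\tX_{\tilde\theta+2\pi k})
\]
is a direct \emph{product} rather than a direct sum. The reverse inclusion is then literally the same diagram chase as before: an element $b$ in the kernel lifts to a family $(b_k)_k\in\prod_k H_*(\tX_{\tilde\theta+2\pi k})$ satisfying the identical band-by-band equations, and $c_k:=(-1)^{k-1}\tau_*^{-k}b_k$ gives $bRc_1Rc_2R\cdots$, now with no requirement that the sequence terminate. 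This product-versus-sum distinction is exactly what separates $D_+$ (arbitrary infinite $R$-chains) from $K_+$ (eventually zero ones), and it renders your chain-level band decomposition unnecessary. Your second proposed fix is in fact a roundabout way of rebuilding this sequence; the paper simply uses it directly.
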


\begin{proof}
Using the exact Mayer--Vietoris sequence
$$
\prod_{k\in\mathbb N}H_*(\tilde X_{\tilde\theta+2\pi k})=H_*^{\Nov,+}\Bigl(\bigsqcup_{k\in\mathbb N}\tilde X_{\tilde\theta+2\pi k}\Bigr)\to H_*^{\Nov,+}(U)\oplus H_*^{\Nov,+}(V)\to H_*^{\Nov,+}(\tilde X_{[\tilde\theta,\infty)}),
$$
this can be proved along the same lines as Lemma~\ref{L:5}.
\end{proof}

Let us introduce a complex
$$
C_*^\lf(\tilde X):=\varprojlim_r C_*(\tilde X,\tilde X_{(-\infty,-r]}\cup\tilde X_{[r,\infty)})
$$
and denote its homology by $H_*^\lf(\tilde X)$. If $f$ is proper, this is the complex of locally finite singular chains.

\begin{lemma}\label{L:7}
We have:
\begin{align*}
K_-+K_+&=\ker\bigl(H_*(X_\theta)\to H_*(\tilde X)\bigr)
\\
K_-+D_+&=\ker\bigl(H_*(X_\theta)\to H_*^{\Nov,+}(\tilde X)\bigr)
\\
D_-+K_+&=\ker\bigl(H_*(X_\theta)\to H_*^{\Nov,-}(\tilde X)\bigr)
\\
D_-+D_+&=\ker\bigl(H_*(X_\theta)\to H_*^\lf(\tilde X)\bigr)
\end{align*}
All maps are induced by the canonical inclusion $X_\theta=\tilde X_{\tilde\theta}\to\tilde X$.
\end{lemma}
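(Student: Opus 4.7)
The plan is to derive all four equalities from a single Mayer--Vietoris strategy applied to the decomposition $\tilde X = \tilde X_- \cup \tilde X_+$, where $\tilde X_- := \tilde X_{(-\infty,\tilde\theta]}$ and $\tilde X_+ := \tilde X_{[\tilde\theta,\infty)}$, noting that $\tilde X_- \cap \tilde X_+ = X_\theta$. For each of the four homology theories on $\tilde X$ appearing in the statement I expect a Mayer--Vietoris long exact sequence, of which the relevant portion reads
\begin{align*}
H_n(X_\theta) \xrightarrow{(i_-,-i_+)} H_n(\tilde X_-) \oplus H_n(\tilde X_+) &\xrightarrow{j_-+j_+} H_n(\tilde X), \\
H_n(X_\theta) \xrightarrow{(i_-,-i_+)} H_n(\tilde X_-) \oplus H^{\Nov,+}_n(\tilde X_+) &\xrightarrow{j_-+j_+} H^{\Nov,+}_n(\tilde X), \\
H_n(X_\theta) \xrightarrow{(i_-,-i_+)} H^{\Nov,-}_n(\tilde X_-) \oplus H_n(\tilde X_+) &\xrightarrow{j_-+j_+} H^{\Nov,-}_n(\tilde X), \\
H_n(X_\theta) \xrightarrow{(i_-,-i_+)} H^{\Nov,-}_n(\tilde X_-) \oplus H^{\Nov,+}_n(\tilde X_+) &\xrightarrow{j_-+j_+} H^\lf_n(\tilde X).
\end{align*}
The point is that on each half-space the relevant theory collapses to its ordinary or Novikov form because the ``missing end'' of the cover is empty: $\tilde X_-$ being bounded above forces $H^{\Nov,+}_*(\tilde X_-) = H_*(\tilde X_-)$ and $H^\lf_*(\tilde X_-) = H^{\Nov,-}_*(\tilde X_-)$, and symmetrically for $\tilde X_+$.

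Granting these sequences, the proof is uniform. The direction ``$\supseteq$'' is immediate from naturality: if $a = c_- + c_+$ with $c_\pm$ in the relevant kernel on $\tilde X_\pm$, then $a$ is killed in the target theory. For ``$\subseteq$'', if $a \in H_n(X_\theta)$ maps to zero in the target, then $(i_-(a), 0)$ lies in $\ker(j_- + j_+) = \img(i_-, -i_+)$, so exactness produces $b \in H_n(X_\theta)$ with $i_-(b) = i_-(a)$ in the left summand and $i_+(b) = 0$ in the right summand. Lemmas~\ref{L:5} and~\ref{L:6}, together with the collapses of the theories on the half-spaces noted above, then identify $a - b$ with an element of $K_-$ or $D_-$ and $b$ with an element of $K_+$ or $D_+$ according to the case, matching the four equalities stated.

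The main obstacle is establishing the Mayer--Vietoris sequences themselves for the Novikov and locally finite theories, since these are not classical. I would derive them at the chain level: the standard short exact sequence of small-chain complexes
$$0 \to C_*(X_\theta) \to C_*(\tilde X_-) \oplus C_*(\tilde X_+) \to C_*(\tilde X) \to 0$$
descends to a short exact sequence after quotienting by the subcomplexes of chains supported in $\tilde X_{[r,\infty)}$, or in $\tilde X_{(-\infty,-r]}$, or in their union, for each level $r$. Passing to the projective limit in $r$ preserves short exactness because the transition maps of the defining inverse systems of quotient complexes are surjective, so the Mittag--Leffler condition holds and ${\varprojlim}^1 = 0$. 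The resulting short exact sequences of chain complexes give the required Mayer--Vietoris long exact sequences in homology.
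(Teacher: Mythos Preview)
Your proof is correct and follows essentially the same approach as the paper: the paper's argument invokes precisely the four Mayer--Vietoris sequences you list (for ordinary, Novikov, and locally finite homology) on the decomposition $\tilde X=\tilde X_{(-\infty,\tilde\theta]}\cup\tilde X_{[\tilde\theta,\infty)}$, and then appeals to Lemmas~\ref{L:5} and~\ref{L:6} exactly as you do. You supply more detail than the paper does---the explicit exactness chase producing $b$ with $i_-(b)=i_-(a)$ and $i_+(b)=0$, and the Mittag--Leffler justification for the Novikov and $\lf$ Mayer--Vietoris sequences---whereas the paper simply asserts these sequences exist.
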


\begin{proof}
The first statement follows from the exact Mayer--Vietoris sequence
$$
H_*(\tilde X_{\tilde\theta})\to H_*(\tilde X_{(-\infty,\tilde\theta]})\oplus H_*(\tilde X_{[\tilde\theta,\infty)})\to H_*(\tilde X)
$$
and Lemma~\ref{L:5}. The second assertion follows from the exact Mayer--Vietoris sequence
$$
H_*(\tilde X_{\tilde\theta})\to H_*(\tilde X_{(-\infty,\tilde\theta]})\oplus H_*^{\Nov,+}(\tilde X_{[\tilde\theta,\infty)})\to H_*^{\Nov,+}(\tilde X)
$$
and Lemma~\ref{L:5} and \ref{L:6}. Similarly, one can check the third equality. To see the last statement
we use the exact Mayer--Vietoris sequence
$$
H_*(\tilde X_{\tilde\theta})\to H_*^{\Nov,-}(\tilde X_{(-\infty,\tilde\theta]})\oplus H_*^{\Nov,+}(\tilde X_{[\tilde\theta,\infty)})\to H_*^\lf(\tilde X)
$$
and Lemma~\ref{L:6}.
\end{proof}

\begin{lemma}\label{L:8}
We have
$$
\ker\Bigl(H_*(\tilde X)\to H_*^{\Nov,-}(\tilde X)\oplus H_*^{\Nov,+}(\tilde X)\Bigr)
\subseteq\img\bigl(H_*(\tilde X_{\tilde\theta})\to H_*(\tilde X)\bigr),
$$
where all maps are induced by the tautological inclusions.
\end{lemma}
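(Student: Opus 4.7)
My plan is to reduce the lemma to the ordinary Mayer--Vietoris exact sequence for the decomposition $\tilde X=\tilde X_{(-\infty,\tilde\theta]}\cup\tilde X_{[\tilde\theta,\infty)}$, whose intersection is $\tilde X_{\tilde\theta}$. This sequence is available because $\theta$ is a tame value. The core of the argument is an auxiliary one-sided claim which converts the Novikov vanishing hypotheses into ordinary ``one-sided'' preimages, after which pure exactness closes the argument.

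The auxiliary claim reads: any $\alpha\in H_*(\tilde X)$ whose image in $H_*^{\Nov,+}(\tilde X)$ vanishes already lies in the image of $H_*(\tilde X_{[\tilde\theta,\infty)})\to H_*(\tilde X)$, and symmetrically for $H_*^{\Nov,-}(\tilde X)$ and $H_*(\tilde X_{(-\infty,\tilde\theta]})$. To prove it I would represent $\alpha$ by a cycle $c\in C_*(\tilde X)$ and use the hypothesis to find a Novikov $(*+1)$-chain $d=(d_r)_r\in\varprojlim_r C_{*+1}(\tilde X,\tilde X_{[r,\infty)})$ with $\partial d=c$ in the inverse limit. At the single level $r=\tilde\theta$ this says $\partial d_{\tilde\theta}$ equals the class of $c$ in $C_*(\tilde X,\tilde X_{[\tilde\theta,\infty)})$. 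Lifting $d_{\tilde\theta}$ to some $\hat d\in C_{*+1}(\tilde X)$, the chain $c-\partial\hat d$ is then a cycle supported in $\tilde X_{[\tilde\theta,\infty)}$ and homologous to $c$ inside $\tilde X$, exhibiting $\alpha$ as the image of a class in $H_*(\tilde X_{[\tilde\theta,\infty)})$.

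With the auxiliary claim in hand, the two hypotheses of the lemma supply classes $\alpha^-\in H_*(\tilde X_{(-\infty,\tilde\theta]})$ and $\alpha^+\in H_*(\tilde X_{[\tilde\theta,\infty)})$ each mapping to $\alpha$. Feeding $(\alpha^-,\alpha^+)$ into the Mayer--Vietoris sequence
\[
H_*(\tilde X_{\tilde\theta}) \xrightarrow{(\iota_-,\iota_+)} H_*(\tilde X_{(-\infty,\tilde\theta]})\oplus H_*(\tilde X_{[\tilde\theta,\infty)}) \xrightarrow{j_--j_+} H_*(\tilde X)
\]
shows that this pair is annihilated by $j_--j_+$, so by exactness it lifts to some $\beta\in H_*(\tilde X_{\tilde\theta})$ with $\iota_\pm\beta=\alpha^\pm$, whose image in $H_*(\tilde X)$ is $j_-\iota_-\beta=j_-\alpha^-=\alpha$. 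The only delicate point is the auxiliary claim: because $H_*^{\Nov,+}(\tilde X)$ is defined as the homology of an inverse limit of chain complexes rather than as an inverse limit of homologies, one must extract a relative bounding chain at a single finite level from the compatible family $d$, but this is precisely what the definition of the inverse limit provides, so no $\varprojlim^1$ obstruction ever appears.
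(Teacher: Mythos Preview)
Your argument is correct, and it takes a genuinely different route from the paper. The paper proves the lemma by writing down a commutative diagram of two exact Mayer--Vietoris-type sequences
\[
\xymatrix{
H^\lf_{*+1}(\tilde X)\ar[r]^-\partial & H_*(\tilde X)\ar[r] & H_*^{\Nov,-}(\tilde X)\oplus H_*^{\Nov,+}(\tilde X)
\\
H^\lf_{*+1}(\tilde X)\ar@{=}[u]\ar[r]^-\partial & H_*(\tilde X_{\tilde\theta})\ar[u]\ar[r] & H_*^{\Nov,-}(\tilde X_{(-\infty,\tilde\theta]})\oplus H_*^{\Nov,+}(\tilde X_{[\tilde\theta,\infty)}) \ar[u]
}
\]
and observing that the left vertical arrow is the identity: any $\alpha$ in the kernel at the top right pulls back to $H^\lf_{*+1}(\tilde X)$, hence factors through $H_*(\tilde X_{\tilde\theta})$.

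Your approach avoids $H^\lf_*$ and the exotic Mayer--Vietoris sequences entirely. Your auxiliary claim is really the observation that the chain map $C_*(\tilde X)\to C_*(\tilde X,\tilde X_{[\tilde\theta,\infty)})$ factors through $C_*^{\Nov,+}(\tilde X)$ via the projection from the inverse limit to the level $r=\tilde\theta$; hence vanishing in $H_*^{\Nov,+}$ forces vanishing in $H_*(\tilde X,\tilde X_{[\tilde\theta,\infty)})$, which by the long exact sequence of the pair is exactly membership in $\img\bigl(H_*(\tilde X_{[\tilde\theta,\infty)})\to H_*(\tilde X)\bigr)$. You then finish with the ordinary Mayer--Vietoris sequence for the closed cover $\tilde X_{(-\infty,\tilde\theta]}\cup\tilde X_{[\tilde\theta,\infty)}$. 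This is more elementary and self-contained --- it uses nothing beyond the definition of the Novikov complex as an inverse limit --- whereas the paper's proof is a one-line diagram chase but presupposes the exactness of sequences mixing $H_*$, $H_*^{\Nov,\pm}$, and $H_*^\lf$. Your remark that no $\varprojlim^1$ obstruction enters is well taken: the projection to a single level is all that is needed.
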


\begin{proof}
This follows from the following commutative diagram of exact Mayer--Vi\-e\-to\-ris sequences:
$$
\xymatrix{
H^\lf_{*+1}(\tilde X)\ar[r]^-\partial & H_*(\tilde X)\ar[r] & H_*^{\Nov,-}(\tilde X)\oplus H_*^{\Nov,+}(\tilde X)
\\
H^\lf_{*+1}(\tilde X)\ar@{=}[u]\ar[r]^-\partial & H_*(\tilde X_{\tilde\theta})\ar[u]\ar[r] & H_*^{\Nov,-}(\tilde X_{(-\infty,\tilde\theta]})\oplus H_*^{\Nov,+}(\tilde X_{[\tilde\theta,\infty)}) \ar[u]
}
$$
A similar argument was used in \cite[Lemma~2.5]{HL99b}.
\end{proof}

\begin{theorem}\label{T:monreg}
The inclusion $\iota\colon X_\theta=\tilde X_{\tilde\theta}\to\tilde X$ induces a canonical isomorphism
$$
H_*(X_\theta)_\reg=\frac D{(K_-+K_+)\cap D}
\xrightarrow\cong\ker\Bigl(H_*(\tilde X)\to H_*^{\Nov,-}(\tilde X)\oplus H_*^{\Nov,+}(\tilde X)\Bigr),
$$
intertwining $R_\reg$ with the monodromy isomorphism induced by the deck transformation $\tau\colon\tilde X\to\tilde X$ on the right hand side. 
Moreover, working with coefficients in a field, and assuming that $H_*(X_\theta)$ is finite dimensional, the common kernel on the right hand side above
coincides with
$$
\ker\bigl(H_*(\tilde X)\to H_*^{\Nov,-}(\tilde X)\bigr)
=\ker\bigl(H_*(\tilde X)\to H_*^{\Nov,+}(\tilde X)\bigr).
$$
Particularly, in this case the latter two kernels are finite dimensional too.
\end{theorem}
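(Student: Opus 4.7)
The plan is to combine the algebraic description of $V_\reg$ from Proposition~\ref{P:AA} with the geometric interpretations of $K_\pm$, $D_\pm$, and their sums furnished by Lemmas~\ref{L:5}--\ref{L:7}. Set $N := (K_-+D_+) \cap (D_-+K_+)$ and $K_0 := K_-+K_+$, so Proposition~\ref{P:AA} gives the canonical identification $V_\reg = N/K_0$. By Lemma~\ref{L:7}, $N$ is precisely the kernel of $H_*(X_\theta) \to H_*^{\Nov,-}(\tilde X) \oplus H_*^{\Nov,+}(\tilde X)$, while $K_0$ is the kernel of $\iota_*\colon H_*(X_\theta) \to H_*(\tilde X)$.

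Consequently $\iota_*$ descends to a well-defined homomorphism $V_\reg = N/K_0 \to L := \ker\bigl(H_*(\tilde X) \to H_*^{\Nov,-}(\tilde X) \oplus H_*^{\Nov,+}(\tilde X)\bigr)$. Injectivity is immediate since $K_0 \subseteq N$. For surjectivity, given $z \in L$, Lemma~\ref{L:8} produces $a \in H_*(X_\theta)$ with $\iota_*(a) = z$; since $\iota_*(a)$ vanishes in both one-sided Novikov homologies, Lemma~\ref{L:7} forces $a \in N$. Intertwining with $\tau_*$ is then read off Lemma~\ref{L:4}: if $a R_\reg b$ admits representatives $d, \tilde d \in D$ with $d R \tilde d$, then $\iota_*(d) = \tau_* \iota_*(\tilde d)$ in $H_*(\tilde X)$, so $R_\reg$ corresponds to the monodromy automorphism induced by $\tau$.

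For the finite-dimensional refinement, Proposition~\ref{P:X} yields $D_+ = D + K_+$ and $D_- = K_- + D$, whence $K_- + D_+ = K_- + D + K_+ = D_- + K_+$; thus $N$ coincides with each of $K_- + D_+$ and $D_- + K_+$. Combining with Lemma~\ref{L:7}, this shows $L \subseteq L^+$ and $L \subseteq L^-$ under the isomorphism of the first step, where $L^\pm := \ker(H_*(\tilde X) \to H_*^{\Nov,\pm}(\tilde X))$.

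The reverse inclusions $L^\pm \subseteq L$ are the main obstacle, since Lemma~\ref{L:8} only controls the intersection $L^+ \cap L^-$. To overcome this I would invoke Theorem~\ref{T3}(3), which splits $H_r(\tilde X) \cong \kappa[T^{-1},T]^N \oplus V_r(\xi_f)$ with $V_r(\xi_f)$ finite-dimensional, together with the standard comparison between the geometrically defined $H_*^{\Nov,\pm}$ and the algebraic Novikov homology $H_*(\tilde X) \otimes_{\kappa[T^{-1},T]} \kappa[T^{-1},T]]$. Under this comparison, both $L^+$ and $L^-$ are identified with the $\kappa[T^{-1},T]$-torsion submodule $V_r(\xi_f)$ (since tensoring with the Novikov field kills torsion and is faithful on the free part), which therefore equals $L$; finite-dimensionality of the common kernel drops out automatically.
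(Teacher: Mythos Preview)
Your argument for the first assertion---the isomorphism $V_\reg\cong L$ and the intertwining with $\tau_*$---is correct and essentially identical to the paper's: both combine Proposition~\ref{P:AA}, Lemmas~\ref{L:7}, \ref{L:8}, and~\ref{L:4} in the same way.

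For the second assertion your route diverges from the paper's, and in a way that weakens the result. The paper gives a direct, self-contained argument: given $a\in L^+$, one first observes that some power $\tau_*^k a$ lies in the image of $H_*(\tilde X_{(-\infty,\tilde\theta]})\to H_*(\tilde X)$ (since $a$ dies in upward Novikov homology, a finite piece of $\tilde X$ above $\tilde\theta$ already kills it); a Mayer--Vietoris argument then forces $\tau_*^k a$ into $\iota_*(H_*(X_\theta))$, indeed into $\iota_*(D_+)$ by Lemma~\ref{L:7}. Now your own computation $D_+=D+K_+$ from Proposition~\ref{P:X} gives $\iota_*(D_+)=\iota_*(D)\subseteq L$, and $\tau_*$-invariance of $L$ finishes. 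This uses nothing beyond Section~\ref{S6}.

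Your proposed detour through Theorem~\ref{T3}(3) has two genuine issues. First, Theorem~\ref{T3} is stated and proved only for \emph{tame} maps $f$, whereas Theorem~\ref{T:monreg} is formulated for an arbitrary continuous $f$ admitting a single tame value $\theta$ with $H_*(X_\theta)$ finite dimensional; your argument therefore establishes strictly less. Second, the ``standard comparison'' you invoke between the geometric Novikov homologies $H_*^{\Nov,\pm}(\tilde X)$ (defined here via inverse limits of relative chain complexes) and the algebraic completion $H_*(\tilde X)\otimes_{\kappa[T^{-1},T]}\kappa[T^{-1},T]]$ is neither stated nor proved in the paper, and is not entirely trivial---one needs at minimum a $\varprojlim^1$-vanishing argument. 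Finally, Section~\ref{S6} is expressly written to be readable independently of the graph-representation machinery; importing Theorem~\ref{T3} defeats that design.
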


\begin{proof}
It follows immediately from Lemma~\ref{L:7} and \ref{L:8} that $\iota_*\colon H_*(X_\theta)\to H_*(\tilde X)$ induces an isomorphism
$$
\frac{(K_-+D_+)\cap(D_-+K_+)}{K_-+K_+}\xrightarrow\cong\ker\Bigl(H_*(\tilde X)\to H_*^{\Nov,-}(\tilde X)\oplus H_*^{\Nov,+}(\tilde X)\Bigr).
$$
In view of Lemma~\ref{L:4}, this isomorphism intertwines the isomorphism induced by $R$ on the left hand side,
with the monodromy isomorphism on the right hand side. Combining this with Proposition~\ref{P:AA} we obtain the first assertion. 
For the second statement it suffices to show
\begin{equation}\label{E:543}
\ker\bigl(H_*(\tilde X)\to H_*^{\Nov,+}(\tilde X)\bigr)
\subseteq\ker\Bigl(H_*(\tilde X)\to H_*^{\Nov,-}(\tilde X)\oplus H_*^{\Nov,+}(\tilde X)\Bigr),
\end{equation}
as the converse inclusion is obvious, and the corresponding statement for the 
downward Novikov homology can be derived analogously.
To this end, suppose $a\in\ker\bigl(H_*(\tilde X)\to H_*^{\Nov,+}(\tilde X)\bigr)$.
Then there exists $k$ such that $\tau^k_*a$ is contained in the image of $H_*(\tilde X_{(-\infty,\tilde\theta]})\to H_*(\tilde X)$.
Using the exact Mayer--Vietoris sequence
$$
H_*(\tilde X_{\tilde\theta})\to H_*(\tilde X_{(-\infty,\tilde\theta]})\oplus H_*^{\Nov,+}(\tilde X_{[\tilde\theta,\infty)})\to H_*^{\Nov,+}(\tilde X)
$$
we conclude, that $\tau_*^ka$ is contained in the image of $H_*(\tilde X_{\tilde\theta})\to H_*(\tilde X)$.
Thus $\tau_*^ka$ is contained in $\iota_*(D_+)$, see Lemma~\ref{L:7}.
Since $H_*(X_\theta)$ is assumed to be a finite dimensional vector space, we have
$\iota_*(D_-)=\iota_*(D)=\iota_*(D_+)$, see~\eqref{E:12}. Using Lemma~\ref{L:7} we thus conclude 
$\tau_*^ka$ is contained in the kernel on the right hand side of \eqref{E:543}.
Since this common kernel is invariant under the isomorphism
$\tau_*\colon H_*(\tilde X)\to H_*(\tilde X)$, we conclude that $a$ has to be contained in the common
kernel too, whence the theorem.
\end{proof}

Clearly, Theorem~\ref{T:monreg} and Proposition~\ref{P:C} imply Theorem~\ref{P1}.

\section {Example}\label{S7}

Figure~\ref{cv-ex} below, describes a tame angle valued map $f\colon X\to \mathbb R$ whose bar codes and Jordan cells are given in the attached table. 
\begin{figure}[h]
\includegraphics{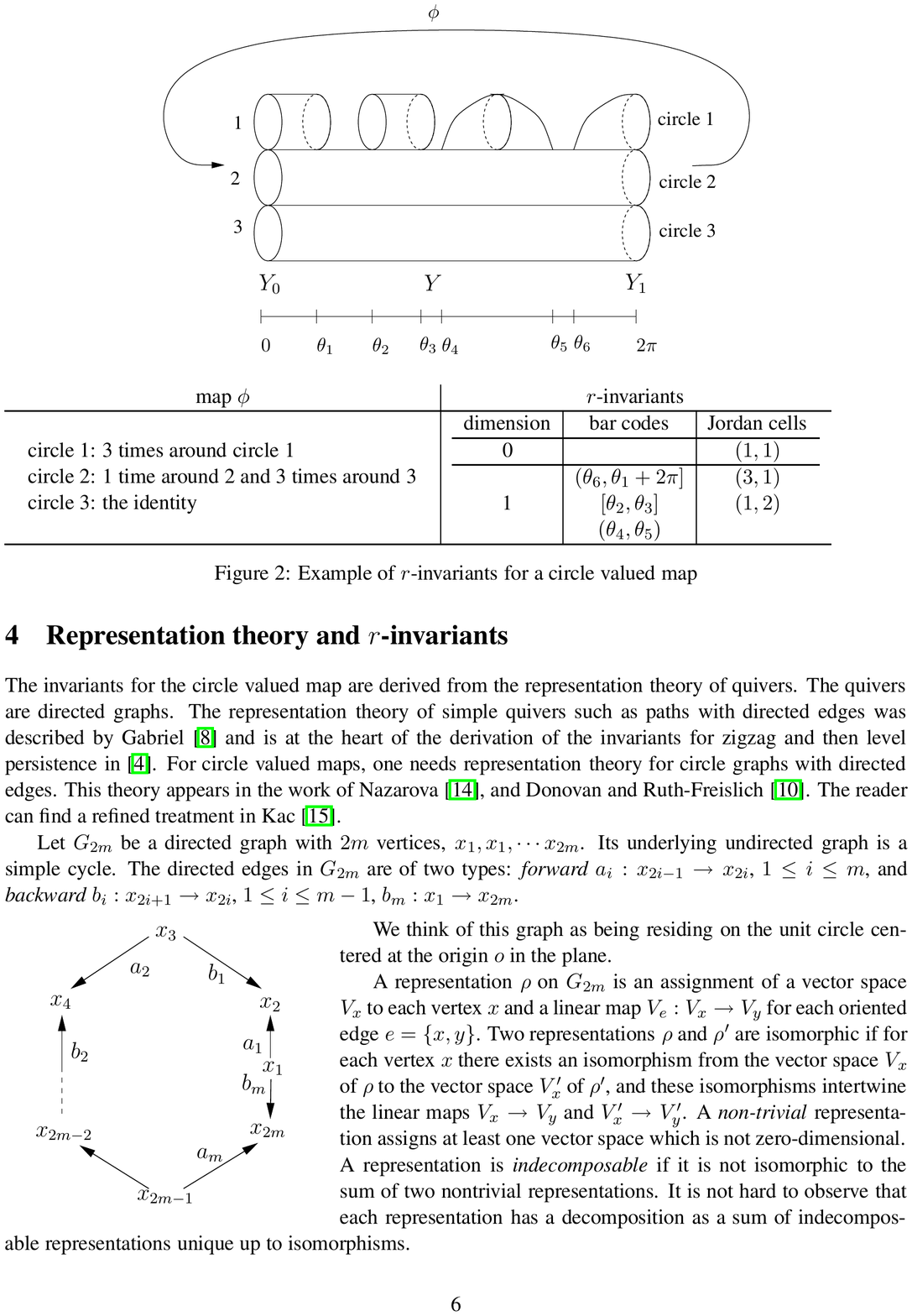}
~\\

{\scriptsize
\begin{tabular}{c|c}
map $\phi$ & $r$-invariants \\ \hline
\begin{tabular}{l}
circle 1: 3 times around circle 1\\
circle 2: 2 time around circle 2 \\
circle 3: 1 time around 2 counter clockwise and 2 times around circle 3\\
\end{tabular} & 
\begin{tabular}{c|c|c}
dimension & bar codes & Jordan cells \\ \hline
0  &  & $(1,1)$ \\ \hline
   & $(\theta_6, \theta_1+2\pi]$ & $(2,2)$ \\
1  & $[\theta_2,\theta_3]$ & \\
   & $(\theta_4,\theta_5)$ &  \\
\end{tabular} \\ \hline 
\end{tabular}}
\caption{Example of $r$-invariants for a circle valued map}
\label{cv-ex}
\end{figure}

The space $X$ is obtained from $Y$  by identifying its  
right end $Y_1$ (a union of three circles) to the left end 
$Y_0$ (a union of three circles) following 
the map $\phi\colon Y_1\to Y_0$ explained in the table. The map $f\colon X\to S^1$ is induced by 
the projection of $Y$ on the interval $[0,2\pi]$.
Note that  $H_1(Y_1)=H_1(Y_0)=\kappa\oplus\kappa\oplus\kappa$
and $\phi$ induces a linear map in $H_1$-homology represented by the matrix 
\begin{equation*}
\begin{pmatrix}
3&0 & 0\\
1&2&-1\\
0 & 0&2            
\end{pmatrix}.
\end{equation*}
There are no bar codes or Jordan cells in dimension 2 since each fiber of $f$ is one-dimensional 
and, as all fibers are connected in dimension zero we have only one Jordan cell $\rho^{II}(1; 1).$It remains to describe the bar codes and the Jordan cells in dimension 1. For this example it is not  hard to derive them 
by applying the main theorems:

In view of Theorem \ref{P1}  we see  that the monodromy identifies to the regular part of the linear relation defined by the  linear maps $\omega_1=\begin{pmatrix}3&0&0\\0&2&-1\\0&0&2\\0&0&0\end{pmatrix}:\kappa^3\to \kappa^4$ and $\omega_2= \begin{pmatrix}0&0&0\\0&1&0\\0&0&1\\0&0&0\end{pmatrix}:\kappa^3\to \kappa^4.$ This regular part can be calculated using the definition in subsection \ref {SS:reg} which can be calculated and is the Jordan cell $(2,2).$

Theorem \ref{T3} 1. a. implies that there exists an open bar code $(4,5)$  and one closed bar code $[2,3].$   This by looking at the homology of various $X_{[a,b]}$ with $0\leq a\leq b\leq 2\pi.$  The same argument implies that we have an other bar code of the form  $(\theta_6, \theta_1+2\pi k].$ Theorem \ref{T1} a.  implies that $k=1$
and these are all bar codes.

We explain below how to use the elementary transformations described in section~\ref{ET} to derive the bar codes and the Jordan cells in the table above.

Note that $m=7$ and we have three representations to consider: $\rho_0$, whose all vector spaces are isomorphic to $\kappa$ and linear maps  identity, 
the representation $\rho_2$ which is trivial and the representation $\rho_1$ which has to be described and decomposed.

\emph{The $G_{14}$-representation $\rho_1$:}
Choose $t_1,\dotsc,t_7$ so that we have $0<t_7-2\pi<\theta_1<t_1<\theta_2<\cdots<\theta_6<t_6<\theta_7=2\pi$.
One has:
$$
V_{2i-1}
=\begin{cases}
\kappa^2 & \text{for $i=2,4,6$} \\
\kappa^3 & \text{for $i= 1,5,7,9$}
\end{cases}
\qquad
V_{2i}
=\begin{cases}
\kappa^2 & \text{for $i=4,5,6$} \\
\kappa^3 & \text{for $i=1,2,3,7$}
\end{cases}
$$
$$
\beta_i
=\begin{cases}
\Id & \text{for $i=2,5,7$} \smallskip\\
\left(\begin{smallmatrix}0&0\\1&0\\0&1\end{smallmatrix}\right) & \text{for $i=1,3$} \medskip\\
\left(\begin{smallmatrix}0&1&0\\0&0&1\end{smallmatrix}\right)  & \text{for $i=4,6$}
\end{cases}
\qquad
\alpha_i
=\begin{cases}
\Id & \text{for $i=1,3,4,6$} \smallskip\\
\left(\begin{smallmatrix}0&0\\1&0\\0&1\end{smallmatrix}\right) & \text{for $i=2$} \medskip\\
\left(\begin{smallmatrix}0&1&0\\0&0&1\end{smallmatrix}\right) & \text{for $i=5$} \medskip\\
\left(\begin{smallmatrix}3&0&0\\0&2&-1\\0&0&2\end{smallmatrix}\right) & \text{for $i=7$}
\end{cases}
$$
\vskip .1in

Using the elementary transformation we modify the representation $\rho_1$ into $\rho(1)$ then into $\rho(2)$ and finally into $\rho(3)$ keeping track of the elimination of bar codes. 

Precisely: 

1. Apply $T_1(5)$  and get $\rho(1)= T_1(5)(\rho_1).$


2. Apply $T_4(3)\cdot T_3(2)$ and get $\rho(2)= T_4(3)\cdot T_3(2)(\rho(1))$.  

3. Apply $T_1(7)\cdot T_4(1)$ and get $\rho(3)= T_1(7)\cdot T_4(1)(\rho(2)).$

In view of the Appendix to section 3, which describe what each elementary transformation does, it is easy to see that: 
\begin{enumerate}
\item $\rho(3)$ has  all $\alpha_i$ but $\alpha_7$ and $\beta_i$  the identity with $\alpha _7= \begin{pmatrix}2&-1\\0&2\end{pmatrix};$ hence no bar codes,
\item $\rho(2)$ has one bar code $(6,8],$  
\item $\rho(1)$ has two bar codes $(6,8]$ and $[2,3],$ 
\item $\rho_1$ has the bar codes $(6,8],$  $[2,3]$ $(4,5).$ 
\end{enumerate}

\begin{thebibliography}{99}

\bibitem{BD11} 
D. Burghelea and T. K. Dey, 
{\it Persistence for circle valued maps.} 
(arXiv:1104.5646), 2011.

\bibitem{BH08} 
D. Burghelea and S. Haller,
{\it Dynamics, Laplace transform and spectral geometry,} 
J. Topol. {\bf 1}(2008), 115--151.

\bibitem{B12} 
D. Burghelea, 
{\it On the bar codes of continuous real and angle valued maps.} (in preparation)

\bibitem{CSD09} 
G. Carlsson, V. de Silva and D. Morozov,
{\it Zigzag persistent homology and real-valued functions,}
Proc. of the 25th Annual Symposium on Computational Geometry 2009, 247--256.

\bibitem{RD55} 
Ren\'e Deheuvels
{\it Topologie d'une fonctionelle.} 
Annals of Mathematics {\bf 61}(1955), 13-72.

\bibitem{HDJW} 
H. Derksen and J. Weyman,
{\it Quiver Representations,}
Notices Amer. Math. Soc. {\bf 52}(2005), 200--206.

\bibitem{DF73} 
P. Donovan and M. R. Freislich,
{\it The representation theory of finite graphs and associated algebras.}
Carleton Mathematical Lecture Notes, No. {\bf 5}.
Carleton University, Ottawa, 1973.

\bibitem{ELZ02} H. Edelsbrunner, D. Letscher, and A. Zomorodian.
Topological persistence and simplification. {\em Discrete
Comput. Geom.} {\bf 28} (2002), 511--533.

\bibitem{Fa04} 
M.Farber,  
{\it Topology of closed 1-form,} 
Mathematical surveys and Monographs,  AMS , Providence, RI {\bf 108}(2004).

\bibitem{G72} 
P. Gabriel,
{\it Unzerlegbare Darstellungen I,} 
Manuscripta Math. {\bf 6}(1972), 71--103.

\bibitem{HL99b}
M. Hutchings and Y.-J. Lee,
{\it Circle-valued Morse theory, Reidemeister torsion, and Seiberg-Witten invariants of 3-manifolds,}
Topology \textbf{38}(1999), 861--888.

\bibitem{N73} 
L. A. Nazarova, {\it Representations of quivers of infinite type (Russian)}, 
Izv. Akad. Nauk SSSR Ser. Mat. {\bf 37}(1973), 752--791.


\bibitem{Novikov}
S. P. Novikov, {\it Quasiperiodic structures in topology.}
In Topological methods in modern mathematics, Proc. Sympos. in honor of John Milnor's sixtieth birthday, 
New York, 1991. eds L. R. Goldberg and A. V. Phillips, Publish or Perish, Houston, TX, 1993, 223--233.

\bibitem{SdSW05}
A. Sandovici, H. de Snoo and H. Winkler, 
{\it The structure of linear relations in Euclidean spaces,}  
Linear Algebra Appl. \textbf{397}(2005), 141--169. 

\bibitem{Pa06} 
A.V.Pajitnpv,  
{\it Circle valued Morse theory,} 
Walter de Gruyer  GmbH and Co, KG,  Berlin, Germany ,  Berlin, NewYork , Providence, RI {\bf 32}(2006).


\end {thebibliography}
\end{document}